\numberwithin{equation}{section}
\tikzstyle{vertex}=[draw=black,circle,fill=black,minimum size=4pt, inner sep=0pt, outer sep=0pt,text=white,line width=0mm]
\tikzstyle{vertex_blue}=[draw=black,circle,fill=blue,minimum size=4pt, inner sep=0pt, outer sep=0pt,text=white,line width=0mm]
\tikzstyle{vertex_red}=[draw=black,circle,fill=red,minimum size=4pt, inner sep=0pt, outer sep=0pt,text=white,line width=0mm]
\tikzstyle{vertex_green}=[draw=black,circle,fill=green,minimum size=4pt, inner sep=0pt, outer sep=0pt,text=white,line width=0mm]
\tikzstyle{vertex_yellow}=[draw=black,circle,fill=yellow,minimum size=4pt, inner sep=0pt, outer sep=0pt,text=white,line width=0mm]
\tikzstyle{c0}=[shape=circle, minimum size=4pt, fill=white]
\tikzstyle{c1}=[shape=rectangle, minimum size=7pt, fill=red]
\tikzstyle{c2}=[shape=diamond, minimum size=10pt, fill=blue]
\tikzstyle{mybox} = [rectangle, rounded corners, minimum width=3cm, minimum height=1cm,text centered, draw=black]
\tikzset{base/.style = {rectangle, rounded corners, draw=black,
                           minimum width=3cm, minimum height=1cm,
                           text centered}}
\newcommand\scalemath[2]{\scalebox{#1}{\mbox{\ensuremath{\displaystyle #2}}}}
\theoremstyle{plain}
\newtheorem{Th}{Theorem}[section]
\newtheorem{Lemma}[Th]{Lemma}
\newtheorem{Cor}[Th]{Corollary}
\newtheorem{Prop}[Th]{Proposition}
\newtheorem{observation}[Th]{Observation}
 \theoremstyle{definition}
\newtheorem{Def}[Th]{Definition}
\newtheorem{Rem}[Th]{Remark}
\newtheorem{?}[Th]{Problem}
\newtheorem{Ex}[Th]{Example}
\newcommand{\A}{\textbf{A}}
\newcommand{\B}{\textbf{B}}
\newcommand{\R}{\mathbb{R}}
\newcommand{\E}{\mathbb{E}}
\begin{document}

\title[Determinental Sidorenko's conjecture and GMRF]{On Sidorenko's conjecture for determinants and Gaussian Markov random fields}

\author[P. Csikv\'ari]{P\'{e}ter Csikv\'{a}ri}

\address{Alfr\'ed R\'enyi Institute of Mathematics \\ H-1053 Budapest, Hungary \\ Re\'altanoda utca 13-15 \and E\"otv\"os Lor\'and University, Institute of Mathematics, Department of Computer Science, H-1117 Budapest, Hungary\\ P\'azm\'any P\'eter  s\'et\'any 1/C} 

\email{peter.csikvari@gmail.com}

\author[B. Szegedy]{Bal\'azs Szegedy}

\address{Alfr\'ed R\'enyi Institute of Mathematics \\ H-1053 Budapest \\ Re\'altanoda utca 13-15}

\email{szegedyb@gmail.com}

\thanks{The first author is  supported by the Counting in Sparse Graphs Lend\"ulet Research Group of the Alfr\'ed R\'enyi Institute of Mathematics.  When the project started he was also  supported by the 
 Marie Sk\l{}odowska-Curie Individual Fellowship grant no. 747430. The second author has received funding from the European Research Council under the European Union's Seventh Framework Programme (FP7/2007-2013) / ERC grant agreement n$^{\circ}$617747. The research was partially supported by the MTA R\'enyi Institute Lend\"ulet Limits of Structures Research Group.
}

 \subjclass[2010]{Primary: 05C50. Secondary: 60G60, 05C35}

 \keywords{Gaussian random Markov field, positive definite matrix, graph homomorphisms}

\begin{abstract} We study a class of determinant inequalities that are closely related to Sidorenko's famous conjecture (also conjectured by Erd\H os and Simonovits in a different form). Our main result can also be interpreted as an entropy inequality for Gaussian Markov random fields (GMRF). We call a GMRF on a finite graph $G$ homogeneous if the marginal distributions on the edges are all identical. We show that if $G$ is bipartite, then the differential entropy of any homogeneous GMRF on $G$ is at least $|E(G)|$ times the edge entropy plus $|V(G)|-2|E(G)|$ times the point entropy. We also show that in the case of non-negative correlation on edges, the result holds for an arbitrary graph $G$. The connection between Sidorenko's conjecture and GMRF's is established via a large deviation principle on high dimensional spheres combined with graph limit theory. It is also observed that the system we study exhibits a phase transition on large girth regular graphs. Connection with Ihara zeta function and the number of spanning trees is also discussed.
\end{abstract}

\maketitle

\section{Introduction}

Gaussian Markov random fields (GMRF's) are fundamental constructions in various areas of mathematics including statistics, computer science, machine learning, statistical physics and probability theory \cite{CheCha,CFP,DLP,Mar,RueHeld}. Continuous versions include Gaussian free fields (GFF's) that are extensively used in quantum field theory. Despite the fact that GMRF's are defined on finite graphs, their study from a graph theoretic point of view is less prevalent.  The main goal of this paper is to initiate a line of research that focuses on the interplay between the properties of a graph $G$ and the possible GMRF's that can be realized on $G$. Inside this larger framework we pick an interesting problem that is closely connected to graph limits and the famous Erd\H os-Simonovits-Sidorenko conjecture. We study GMRF's on graphs with the homogeneity property that their marginal distributions on the edges of the underlying graph are all identical. We show that entropies of such fields are, in a certain sense, limits of homomorphism densities known from graph limit theory. This enables us to study various correspondences between extremal graph theory and homogeneous GMRF's. In particular we prove the Erd\H os-Simonovits-Sidorenko conjecture in the GMRF framework.
\bigskip

\textbf{Markov random fields.} 
Markov random fields on graphs are natural generalizations of Markov chains. They
are defined as joint distributions of random variables associated with the vertices of a
given graph satisfying certain conditional independence properties. Classical Markov chains are
equivalent to Markov random fields defined on paths of finite or infinite length. Gaussian
Markov random fields (GMRF's) are special Markov random fields with the additional
property that the joint distribution of the variables is Gaussian.

More formally, given a graph $G=(V,E)$ a collection of random variables $(X_v)_{v\in V(G)}$ has the spatial Markov property if the following condition holds true: for any set $S$ the joint distribution of $(X_v)_{v\in S}$ conditioned on $(X_u)_{u\in V\setminus S}$ is the same as  the joint distribution of $(X_v)_{v\in S}$ conditioned on $(X_u)_{u\in N_G(S)}$, where $N_G(S)$ is the set of neighbors of $S$, that is, $N_G(S)=\{u \in V\setminus S\ |\ \exists v\in S: (u,v)\in E(G)\}$. We say that $(X_v)_{v\in V}$ is a Markov random field if $(X_v)_{v\in V}$ has the spatial Markov property.

\begin{figure}[h!]
    \centering
    \begin{tikzpicture}
    \node[vertex_green] (u1) at (1,0) {};
	  \node[vertex_blue] (u2) at (3,-1) {};
		\node[vertex] (u3) at (5,-1) {};
    \node[vertex] (u4) at (7,0) {};
    \node[vertex] (u5) at (8,1.25) {};
    \node[vertex] (u6) at (7,2.5) {};
    \node[vertex] (u7) at (5,3.5) {};
    \node[vertex_red] (u8) at (3,3.5) {};
		\node[vertex_yellow] (u9) at (1,2.5) {};
		\node[vertex_blue] (u10) at (0,1.25) {};
		\draw[rounded corners] (-0.75,-0.5) rectangle (1.75,3) node[anchor=north] at (0.5,-0.5) {$S$};
		\draw[rounded corners] (2.25,-1.5)  rectangle (3.75,4) node[anchor=north] at (3,-1.5) {$N_G(S)$};

  \draw (u1) -- (u2);
	\draw (u2) -- (u3);
	\draw (u3) -- (u4);
	\draw (u4) -- (u5);
	\draw (u5) -- (u6);
	\draw (u6) -- (u7);
	\draw (u7) -- (u8);
	\draw (u8) -- (u9);
	\draw (u9) -- (u10);
	\draw (u10) -- (u1);
	\draw (u1) -- (u9);
	\draw (u1) -- (u8);
	\draw (u2) -- (u8);
	\draw (u3) -- (u7);
	\draw (u3) -- (u8);
	\draw (u3) -- (u6);
	\draw (u4) -- (u6);

    \end{tikzpicture}
    \caption{A graph with a partial proper coloring. For a uniformly chosen proper coloring the distribution of the colors of the set $S$ only depends on the coloring of the set $N_G(S)$, but not on the rest of the graph.}
    \label{fig:my_label}
\end{figure}

Markov random fields naturally show up in the study of spin systems. For instance, if we consider a uniform random proper coloring of a graph $G$ with $q$ colors, and $X_v\in \{1,2,\dots ,q\}$ is the color of the vertex $v$, then it is easy to see that $(X_v)_{v\in V}$ is a Markov random field. Similarly, if we consider a uniform random independent set of $G$, and $X_v\in \{0,1\}$ describes whether $v$ is in the independent set or not, then again $(X_v)_{v\in V}$ forms a Markov random field.  In general, given graphs $G$ and $H$ we say that a map $\varphi:V(G)\to V(H)$ is a homomorphism if $(\varphi(u),\varphi(v))\in E(H)$ whenever $(u,v)\in E(G)$. Then  if we consider a uniform random homomorphism from $G$ to $H$, then the random variables $X_v=\varphi(v)$ form a Markov random field on $G$.

\textbf{Multivariate Gaussian distributions.} A vector of random variables $(X_1,\dots ,X_n)$ is said to have a multivariate Gaussian distribution if for any $a_1,\dots ,a_n\in \R$  the random variable $a_1X_1+\dots +a_nX_n$ has a Gaussian distribution. In this case we say that the random variables $(X_k)_{k=1}^n$ form a Gaussian random field. It turns out that a multivariate Gaussian distribution is completely described by the mean vector $\mu=(\E X_1,\dots ,\E X_n)$ and covariance matrix $\Sigma$ with entries $\Sigma_{ij}=\mathrm{Cov}(X_i,X_j)$, and so we will use the notation $\mathcal{N}(\mu,\Sigma)$ for such a Gaussian distribution. The matrix $\Sigma$ is always positive semidefinite. We say that $(X_1,\dots ,X_n)$ has a non-degenerate multivariate Gaussian distribution if $\Sigma$ is positive definite, that is, $\det(\Sigma)>0$. In this paper we only consider non-degenerate multivariate Gaussian distributions. In this case, the density function of $(X_1,\dots ,X_n)$ can be written as 
$$f(\underline{x})=\frac{1}{(2\pi)^{n/2}\det(\Sigma)^{1/2}}\exp\left(-\frac{1}{2}(\underline{x}-\mu)^T\Sigma^{-1}(\underline{x}-\mu)\right).$$
This shows that the determinant of the covariance matrix $\Sigma$ of a Gaussian random field $\mathcal{N}(0,\Sigma)$ plays a special role.  On one hand, after some transformation it acts like a partition function since
$$\int \exp\left(-\frac{1}{2}\underline{x}^T\Sigma^{-1}\underline{x}\right)\, d\underline{x}=(2\pi)^{n/2}\det(\Sigma)^{1/2}.$$
On the other hand, it also acts as an entropy. For the density function
$$f(\underline{x})=\frac{1}{(2\pi)^{n/2}\det(\Sigma)^{1/2}}\exp\left(-\frac{1}{2}\underline{x}^T\Sigma^{-1}\underline{x}\right)$$
the differential entropy is
$$-\int f(\underline{x})\ln f(\underline{x})\, d\underline{x}=\frac{n}{2}+\frac{n}{2}\ln(2\pi)+\ln \det(\Sigma).$$

Since multivariate Gaussian distributions with mean $\underline{0}$ are determined by their covariance structure, it is possible to define and study GMRF's in purely algebraic terms.
Various probabilistic properties of GMRF's (including their definitions) correspond to
simple algebraic properties of their covariance matrices. This allows us to state and prove
our main results in the more formal linear algebraic setting. While covariance matrices
in general are characterized by positive semi-definiteness, it turns out that Markov property manifests itself in two different but equivalent ways. It can be described either by a
simple determinant maximization property of the covariance matrix or by the constraint
that the inverse of the covariance matrix (also called precision matrix) has zero entries
at the non-edges of the graph. In this paper we make use of both forms. Since the determinant can be regarded both as entropy and partition function, this observation leads to two natural problems.
\bigskip

\textbf{Entropy maximization.} It is well-known that among absolutely continuous distributions on $\R$ with fixed mean $\mu$ and variance $\sigma^2$ it is the normal distribution $\mathcal{N}(\mu,\sigma^2)$ that maximizes the differential entropy. A natural variant of this question is the following: for a graph $G$ consider all Gaussian random fields such that $\mu=\underline{0}$, $\E X_v^2=1$ for all $v\in V(G)$, and $\E X_uX_v=x_e$ is a fixed value for each edge $e=(u,v)$. By the above discussion searching for the maximum entropy distribution among these Gaussian random fields  is equivalent with searching for the matrix $\widehat{\Sigma}$ with maximal determinant among positive semidefinite matrices with prescribed elements: $\Sigma_{vv}=1$ in the diagonal and $\Sigma_{u,v}=x_e$ for each edge $e=(u,v)\in E(G)$. It turns out that if there is a positive definite matrix satisfying these conditions, then the determinant maximizer $\widehat{\Sigma}$ has the property that $\widehat{\Sigma}^{-1}_{uv}=0$ whenever $(u,v)\notin E(G)$, and this is the unique one with this condition. This technical condition is equivalent with a very natural one: the Gaussian random field has the spatial Markov property! (See Theorems 2.2, 2.3 and 2.4 of \cite{RueHeld}.) We call this random field the Gaussian Markov random field. 

\textbf{Maximum likelihood estimator.}
The maximum likelihood estimator (MLE) problem for a Gaussian graphical model is the following. Given a graph $G=(V,E)$ and given samples $X_1,\dots ,X_m$ from a centered multivariate normal distribution $\mathcal{N}(\underline{0},\Sigma)$, that is, $X_k=(X^{(k)}_v)_{v\in V}$ for each $k=1,\dots ,m$. Suppose that the sample covariance matrix $S=\frac{1}{m}\sum_{k=1}^mX_kX_k^T$ can only be measured along the edges of a graph $G$.
Given this partially filled matrix $S_G$ we are looking for the maximum likelihood estimator $\widehat{\Sigma}$. Concerning this problem Dempster  \cite{Demp} proved the following theorem.

\begin{Th}[Dempster \cite{Demp}]
In the Gaussian graphical model on the graph $G$, the MLE of the
covariance matrix $\Sigma$ exists if and only if the $G$-partial sample covariance
matrix $S_G$ can be completed to a positive definite matrix. Then the MLE $\widehat{\Sigma}$
is the unique completion satisfying $\widehat{\Sigma}^{-1}_{i,j}=0$ for all $(i,j)\notin E(G)$.
\end{Th}

So again the MLE is the Gaussian Markov random field on the graph $G$.
As we already mentioned, in this paper we mainly focus on a homogeneous version of this problem, where the diagonal elements of $S_G$ are $1$, and for every edge $(i,j)$ of $G$ we have $(S_G)_{i,j}=x$ for some $x\in (-1,1)$. The formal description is as follows.
\bigskip

\begin{Def}
For a finite graph $G=(V,E)$ and $x\in (-1,1)$ let $\mathcal{A}(G,x)$ denote the set of $V\times V$ matrices $M$ such that
\begin{enumerate}
\item $M$ is positive definite,
\item every diagonal entry of $M$ is $1$,
\item $M_{i,j}=x$ for every edge $(i,j)$ of $G$,
\item $M_{i,j}$ is a priori unspecified if $i$ and $j$ are not adjacent.
\end{enumerate}
Let $\A_G(x)\in \mathcal{A}(G,x)$ be the matrix that maximizes the determinant in $\mathcal{A}(G,x)$. Let 
$$\tau(G,x):=\det(\A_G(x)).$$
\end{Def}

If $x\in (0,1)$ the set $\mathcal{A}(G,x)$ is non-empty since the matrix with $x$ for all off-diagonal elements is positive definite. Its eigenvalues are $1+(n-1)x$ with multiplicity $1$ and $1-x$ with multiplicity $n-1$. (For bipartite graphs the set $\mathcal{A}(G,x)$ is non-empty for all $x\in (-1,1)$. In general, the set $\mathcal{A}(G,x)$ is non-empty in the interval $(-\frac{1}{\vartheta(\overline{G})-1},1)$, where $\vartheta(G)$ is Lov\'asz's theta function.) So we can be sure that the determinant-maximizer exists. The strict concavity of the function $M\mapsto \log(\det(M))$ (a consequence of an entropy inequality) and the convexity of $\mathcal{A}(G,x)$ together imply that there is a unique matrix $\A_G(x)$ in $\mathcal{A}(G,x)$ which maximizes determinant. This is exactly the entropy maximizer and MLE estimator, and as we mentioned this matrix is known as the covariance matrix of the Gaussian Markov random field $\{X_v\}_{v\in V(G)}$ (or shortly GMRF) on $G$ in which $X_v\sim \mathcal{N}(0,1)$ holds for every vertex $v$ and $\mathbb{E}(X_iX_j)=x$ holds for every edge $(i,j)$ of $G$.

In this paper we focus on the function $\tau(G,x)$. This is an  analytic function of $x$ for every fixed graph $G$. One can for example easily see that if $G$ is a tree, then $\A_G(x)$ is the $V\times V$ matrix with entries $x^{d_G(u,v)}$, where $d_G(u,v)$ is the distance of the vertices $u$ and $v$ in the graph $G$. Furthermore, $\tau(G,x)=(1-x^2)^{|E(G)|}.$ 
If $G$ is the four cycle, then
$$\tau\Bigl(~C_4,\sqrt{x^2/2+x/2}~\Bigr)=1-2x+2x^3-x^4.$$
One can compute explicitly the function $\tau(G,x)$ for some very special classes of graphs like chordal graphs, strongly regular graphs and complete (bipartite) graphs, but
in general we are not aware of any nice explicit formula for $\tau(G,x)$. However, we know that the power series expansion of $\tau(G,x)$ around $0$ has integer coefficients that carry interesting combinatorial meaning. 
\bigskip

The homogeneity requirement might be strange for the first sight, but it is natural for several reasons. 
First of all, in this case the maximizing matrix $\A_G(x)$ exists. In general this is not an easy question, this is the positive definite matrix completion problem, see the papers \cite{BJL1,BJLT,LUZ,Uhler,Uhler2}. Secondly, it turns out that some of our theorems is not even true in the non-homogenous case. Thirdly, even if we only consider a Markov chain, it is most natural to consider the setting when we have a fixed transition matrix at every step, this corresponds to the case when we have the same marginal  at every edge of the path graph. Different marginals would correspond to the case that we change the transition matrix at every step.
We will also see that the function $\tau(G,x)$ can be expressed as the limit of normalized homomorphism numbers, and these are most natural if we consider the same graphon on every edge. There is also another motivation coming from graph theory. To explain it we need the concept of orthogonal representation. For a graph $G$ the vectors $v_1,\dots ,v_n\in \mathbb{R}^d$ form an orthogonal representation (OR) if the scalar product $\langle v_i,v_j \rangle=0$ for all $(i,j)\notin E(G)$, and $||v_i||_2=1$ for all $i\in V(G)$. The Lov\'asz $\vartheta$ function \cite{Lov} is then defined as follows:
$$\vartheta(G)=\min_{||c||_2=1 \atop v_i\ OR}\max_{i\in V(G)}\frac{1}{\langle v_i,c\rangle^2}.$$
One can imagine it as $c$ being the handle of an umbrella and we try to close the umbrella as much as we can without violating the conditions $\langle v_i,v_j\rangle =0$ for  $(i,j)\notin E(G)$. The role of $\tau(G,x)$ is very similar: this time we have  $v_1,\dots ,v_n\in \mathbb{R}^d$ with $||v_i||_2=1$ and $\langle v_i,v_j\rangle=x$ for $(i,j)\in E(G)$ and we try to maximize the volume of the parallelepiped determined by the vectors $v_i$. Besides the analogy there is one more connection here: $\mathcal{A}(G,x)$ is non-empty if and only if $x\in (-\frac{1}{\vartheta(\overline{G})-1},1)$, where $\overline{G}$ is the complement of the graph $G$ (see \cite{Knuth}). Nevertheless in Section~\ref{multivariate} we will explain which theorems extend to the non-homogeneous case. 
\bigskip

Using a large deviation principle on high dimensional spheres and logarithmic graph limits \cite{Sz1} we will relate the quantities $\tau(G,x)$ to more familiar subgraph densities $t(G,H)$ from extremal combinatorics. If $G$ and $H$ are finite graphs, then $t(G,H)$ denotes the probability that a random map from $V(G)$ to $V(H)$ takes edges to edges, that is, it is a homomorphism. We say that $G_1,G_2,\dots, G_n$ satisfy a multiplicative inequality with powers $\alpha_1,\alpha_2,\dots,\alpha_n$ if $$\prod_{i=1}^n t(G_i,H)^{\alpha_i}\geq 1$$ holds for every nonempty graph $H$.

For example the famous conjecture of Sidorenko \cite{Sid} says that if $G$ is a bipartite graph, then $G,K_2$ satisfies a multiplicative inequality with powers $1,-|E(G)|$. In other words, $t(G,H)\geq t(K_2,H)^{|E(G)|}$ holds for every $H$. In this case we say that $G$ is a {\bf Sidorenko graph}. Sidorenko's conjecture \cite{Sid} then says that every bipartite graph is a Sidorenko graph. Even though this conjecture is still open there are many known examples for Sidorenko graphs including rather general infinite families. For literature on Sidorenko's conjecture see: \cite{BP,Sid,Hat,L,LSz,BR,KLL,CFS,Sz1,Sz2}.
The smallest graph for which Sidorenko's conjecture is not known is the so-called M\"obius ladder on $10$ vertices which is isomorphic to $K_{5,5}\setminus C_{10}$. Note that a somewhat stronger version of the conjecture was formulated by Erd\H os and Simonovits in \cite{Sim}. Our next theorem says that the quantities $\tau(G,x)$ behave as subgraph densities in terms of multiplicative inequalities. 

\begin{Th}\label{hom-tau} If $G_1,G_2,\dots G_n$ satisfy a multiplicative inequality with powers $\alpha_1,\alpha_2,\dots,\alpha_n$, then $\prod_{i=1}^n \tau(G_i,x)^{\alpha_i}\geq 1$ holds for every $x\in (-1,1)$.
\end{Th}

To prove Theorem~\ref{hom-tau} we use a large deviation principle on high dimensional spheres (see Theorems~\ref{sphere} and \ref{ldsph}) which is interesting on its own right. 
We show that $\tau(G,x)$ is the limit (using logarithmic limits from \cite{Sz1}) of graphs arising from growing dimensional spheres. In this sense Theorem~\ref{main2} below verifies Sidorenko's conjecture in this special limiting situation. 
\bigskip

In this paper we put a great emphasis on the understanding of the function $\tau(G,x)$, especially on its estimates. The following theorem is one of the main results of this paper.

\begin{Th}\label{tau-main} Let $G$ be a graph with $e(G)$ edges, and $x\in [0,1)$. Then 
$$\tau(G,x)\geq(1-x^2)^{e(G)}.$$ 
\end{Th}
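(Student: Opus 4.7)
My plan is to proceed by induction on $e(G)$ via the one-step inequality
\[
\tau(G,x) \;\geq\; (1-x^2)\,\tau(G-e,x) \qquad \text{for every } e\in E(G),\ x\in[0,1).
\]
The base case $e(G)=0$ gives $\tau(G,x)=\det I=1$, and iterating yields $\tau(G,x)\ge (1-x^2)^{e(G)}$. I split the proof of the one-step inequality according to whether $e$ is a bridge of $G$ or not.

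\textit{Bridge case.} If $e=(u,v)$ is a bridge, write $G-e=G_1\sqcup G_2$ with $u\in V(G_1)$, $v\in V(G_2)$; then $\tau(G-e,x)=\tau(G_1,x)\,\tau(G_2,x)$ by multiplicativity of $\tau$ on disjoint unions. I would construct an explicit $M\in\mathcal{A}(G,x)$ by gluing the two optimal matrices: take $\A_{G_1}(x)$ and $\A_{G_2}(x)$ as diagonal blocks and define the cross block through
\[
M_{w_1,w_2}\;=\;x\,(\A_{G_1}(x))_{w_1,u}\,(\A_{G_2}(x))_{w_2,v},\qquad w_1\in V(G_1),\ w_2\in V(G_2),
\]
so that $M_{u,v}=x$ and all edge constraints of $G$ are satisfied. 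Writing $m_1=(\A_{G_1}(x))_{\cdot,u}$ and $m_2=(\A_{G_2}(x))_{\cdot,v}$, one has $\A_{G_i}(x)^{-1}m_i=e_u$ (resp.\ $e_v$), hence $m_i^\top\A_{G_i}(x)^{-1}m_i=1$; two Schur complement reductions then give
\[
\det M=(1-x^2)\,\det \A_{G_1}(x)\cdot \det \A_{G_2}(x)=(1-x^2)\,\tau(G-e,x),
\]
so $\tau(G,x)\ge (1-x^2)\,\tau(G-e,x)$ in this case.

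\textit{Non-bridge case.} Now suppose $G-e$ remains connected, and set $M^*:=\A_{G-e}(x)$. The Lagrangian optimality condition for $\max\log\det$ on $\mathcal{A}(G-e,x)$, together with $(u,v)\notin E(G-e)$, forces the Markov condition $(M^{*-1})_{uv}=0$. Write $y:=M^*_{uv}$, $a:=(M^{*-1})_{uu}$, $b:=(M^{*-1})_{vv}$, and let $M$ be obtained from $M^*$ by changing only $M^*_{uv}$ and $M^*_{vu}$ to $x$. The matrix-determinant lemma for this rank-two perturbation, using $(M^{*-1})_{uv}=0$, gives
\[
\det M\;=\;\det M^*\,\bigl(1-ab(x-y)^2\bigr).
\]
The inductive step then reduces to the \emph{key inequality} $ab(x-y)^2\le x^2$.

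The main obstacle is that this key inequality \emph{fails} for the single-entry modification in general: with $G=C_5$ and $e$ any edge, $G-e$ is a path of length $4$ between $u$ and $v$, so $y=x^4$, $a=b=(1-x^2)^{-1}$, and
\[
ab(x-y)^2\;=\;\frac{x^2(1+x+x^2)^2}{(1+x)^2}\;>\;x^2\qquad\text{for all }x\in(0,1).
\]
Thus the direct candidate $M$ loses too much determinant, and one is forced to modify several entries of $M^*$ simultaneously---moving it toward the true optimum $\A_G(x)$---to recover the factor $(1-x^2)$. This is the delicate part, and is presumably where the paper's heavier machinery enters: graph limit theory, the large deviation principle on high-dimensional spheres, and the Ihara zeta function identity previewed in the abstract, which together transfer Sidorenko-type bounds $t(H,W)\ge t(K_2,W)^{e(H)}$ (classical for bipartite $H$) to the Gaussian/determinantal setting with non-negative edge correlations. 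Completing the non-bridge inductive step for arbitrary $G$ via these tools constitutes the main technical content of the proof.
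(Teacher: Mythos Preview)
Your bridge-case argument is correct, and you have correctly identified that the single-entry perturbation of $\A_{G-e}(x)$ fails in the non-bridge case (the $C_5$ example is valid). The genuine gap is precisely there, and your speculation about how the paper fills it is wrong: the large deviation principle, graph limits, and the Ihara zeta function are used for \emph{other} theorems in the paper (Theorems~\ref{hom-tau}, \ref{Kdd-tau}, \ref{tightness}), not for Theorem~\ref{tau-main}. The proof of Theorem~\ref{tau-main} is entirely elementary.

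The key ideas you are missing are these. First, the paper works with the \emph{dual} optimization problem: instead of perturbing $\A_{G-e}(x)$ to get a candidate in $\mathcal{A}(G,x)$, one perturbs $\B_G(x)=\A_G(x)^{-1}$ to get a candidate in the dual family $\mathcal{B}(G-e,x)$ (positive definite matrices with the correct support and a trace-one normalization). This direction is much more tractable because $\B_G(x)$ has a prescribed sparsity pattern. Second, one does \emph{not} need $\tau(G,x)\ge(1-x^2)\tau(G-e,x)$ for every edge; it suffices to find \emph{some} edge for which either deletion or a second operation works. The paper shows that for any vertex $u$ one has $\sum_{v\in N(u)}y_G(u,v)\ge 0$ where $y_G(u,v):=-\B_G(x)_{u,v}$, so there is always an edge $e=(u,v)$ with $y_G(u,v)\ge 0$. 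Third, the induction branches on the size of $y_G(u,v)$: if $0\le y_G(u,v)\le \frac{x}{1-x^2}$ then a rank-two perturbation of $\B_G(x)$ lies in $\mathcal{B}(G-e,x)$ and gives $\tau(G,x)\ge(1-x^2)\tau(G-e,x)$; if $y_G(u,v)\ge\frac{x}{1-x^2}$ one instead uses \emph{edge contraction}, building from $\B_G(x)$ via a Schur complement a candidate in $\mathcal{B}(G/e,x)$ that yields $\tau(G,x)\ge(1-x^2)\tau(G/e,x)$. Since $e(G/e)\le e(G)-1$, either branch closes the induction. No analytic machinery is required.
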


Note that since $\tau(K_2,x)=1-x^2$ for the complete graph on $2$ vertices, denoted by $K_2$, the theorem says that $\tau(G,x)\geq\tau(K_2,x)^{e(G)}$.

It is easy to see that if $G$ is bipartite, then $\tau(G,x)$ is an even function of $x$, and thus Theorem~\ref{tau-main} implies the following weaker result.

\begin{Th}\label{main2} Let $G$ be a bipartite graph with $e(G)$ edges, and $x\in (-1,1)$. Then 
$$\tau(G,x)\geq(1-x^2)^{e(G)}.$$ 
\end{Th}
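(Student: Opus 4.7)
The plan is to derive Theorem~\ref{main2} directly from Theorem~\ref{tau-main}, exactly as the sentence preceding the statement already hints. Since Theorem~\ref{tau-main} yields the inequality $\tau(G,x)\geq(1-x^2)^{e(G)}$ for all graphs $G$ on the range $x\in[0,1)$, and since the right-hand side $(1-x^2)^{e(G)}$ is manifestly an even function of $x$, it suffices to show that $\tau(G,\cdot)$ is also even whenever $G$ is bipartite. The desired inequality on the full interval $(-1,1)$ then follows by reflecting $x\mapsto|x|$ and invoking Theorem~\ref{tau-main}.

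To establish the evenness, I would use the standard sign-flipping trick available for bipartite graphs. Let $V(G)=A\sqcup B$ be the bipartition and let $D$ be the diagonal $V\times V$ matrix with $D_{vv}=+1$ for $v\in A$ and $D_{vv}=-1$ for $v\in B$. The map $M\mapsto DMD$ is an involution on the space of symmetric $V\times V$ matrices that preserves positive definiteness and leaves every diagonal entry unchanged. For an edge $(i,j)$ of $G$ we must have $i\in A$ and $j\in B$ (or vice versa), so $(DMD)_{ij}=-M_{ij}$. Consequently the map restricts to a bijection $\mathcal{A}(G,x)\to\mathcal{A}(G,-x)$, and since $\det(DMD)=\det(D)^2\det(M)=\det(M)$, it sends the determinant-maximiser to the determinant-maximiser. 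Hence $\tau(G,-x)=\tau(G,x)$ for every $x\in(-1,1)$.

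Combining these two points: for $x\in[0,1)$ the bound is Theorem~\ref{tau-main} itself, and for $x\in(-1,0)$ we write $\tau(G,x)=\tau(G,|x|)\geq(1-|x|^2)^{e(G)}=(1-x^2)^{e(G)}$. There is no genuine obstacle at this level; the entire mathematical content of Theorem~\ref{main2} lies inside Theorem~\ref{tau-main}, and the only thing the bipartite version adds is the observation that the stronger Sidorenko-type statement (allowing negative correlations) comes for free from the $A\sqcup B$ sign symmetry. In the bipartite setting this even makes Theorem~\ref{main2} the more natural formulation, since for non-bipartite graphs values $x<0$ can make $\mathcal{A}(G,x)$ empty and no analogous symmetry is available.
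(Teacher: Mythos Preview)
Your proposal is correct and follows exactly the approach indicated in the paper: the paper simply notes that for bipartite $G$ the function $\tau(G,x)$ is even and hence Theorem~\ref{tau-main} immediately implies Theorem~\ref{main2}, and you have supplied the (standard) sign-flipping argument with the diagonal $\pm1$ matrix $D$ that justifies the evenness. The one minor inaccuracy is your final aside that $\mathcal{A}(G,x)$ can be empty for $x<0$; this is not relevant to the proof, but it is in fact true for sufficiently negative $x$ when $G$ contains an odd clique, so your intuition there is sound.
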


As we already mentioned the importance of the function $\tau(G,x)$ is also rooted in the fact that the differential entropy of the GMRF corresponding to $\A_G(x)$ is $$\frac{|V|}{2}\ln(2\pi e)+\frac{1}{2}\ln(\tau(G,x)).$$
The next observation connects the theorems in this paper to differential entropy. Let us denote the differential entropy of a joint distribution $\{X_i\}_{i\in I}$ by $\mathbb{D}(\{X_i\}_{i\in I})$, that is,
$$\mathbb{D}(\{X_i\}_{i\in I})=-\int_{\mathbb{R}^I}f(x)\ln f(x)dx,$$
where $f(x)=f(x_i,i\in I)$ is the joint density of $\{X_i\}_{i\in I}$.  

\begin{observation}\label{obs1} If $G=(V,E)$ satisfies the inequality $\tau(G,x)\geq\tau(K_2,x)^{|E(G)|}$, then every homogeneous GMRF $\{X_v\}_{v\in V}$ on $G$ satisfies the following entropy inequality
\begin{equation}\label{entin}
\mathbb{D}(\{X_v\}_{v\in V})-\sum_{(i,j)\in E}\mathbb{D}(\{X_i,X_j\})+\sum_{v \in V} ({\rm deg}(v)-1)\mathbb{D}(X_v)\geq 0.
\end{equation}
\end{observation}

Using the homogeneity of $\{X_v\}_{v\in V}$, the inequality in the observation is equivalent with the fact that the differential entropy of the whole field is at least $|E(G)|$ times the edge entropy plus $|V(G)|-2|E(G)|$ times the vertex entropy. Formally, the vertex entropies are only needed to cancel the extra additive constant in the formula for differential entropy, however we believe that they may become important in a more general circle of questions. The left hand side of (\ref{entin}) is an interesting invariant for general GMRF's where the marginals are not necessarily equal. 
\bigskip

We will also study the logarithmic derivative of the function $\tau(G,x)$. Theorem~\ref{tau-main} asserts that the function $\frac{\tau(G,x)}{(1-x^2)^{e(G)}}$ is always at least $1$. One might wonder whether this is a monotone increasing function. After taking logarithm and differentiating this question is equivalent with the question
$$\frac{\tau'(G,x)}{\tau(G,x)}\geq e(G)\frac{\tau'(K_2,x)}{\tau(K_2,x)}.$$
It will turn out that there is a very nice identity for the logarithmic derivative in terms of the elements of the matrix $\A_G(x)^{-1}$, see Lemma~\ref{log-derivative}. This identity will have many applications in this paper. In particular, it will enable us to prove 
the following strengthening of Theorem~\ref{tau-main} for certain $x$.

\begin{Th} \label{log-ineq} Let $G$ be a graph with $e(G)$ edges, largest degree $\Delta$, average degree $\overline{d}$. Then for all $x\in [0,\frac{1}{\Delta-1}]\cup [\frac{1}{\overline{d}-1},1]$ we have
$$\frac{\tau'(G,x)}{\tau(G,x)}\geq e(G)\frac{\tau'(K_2,x)}{\tau(K_2,x)}.$$
\end{Th}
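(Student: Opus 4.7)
The plan is to reduce the inequality to a positivity statement about $B := \A_G(x)^{-1}$ and then apply two different positive-semidefiniteness (PSD) arguments, one for each sub-interval.

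First I would compute the logarithmic derivative explicitly. Because $\A_G(x)$ is the max-determinant completion of the values prescribed on $E(G)\cup\{\mathrm{diag}\}$, the first-order optimality (Lagrangian) condition forces $B_{ij}=0$ whenever $i \neq j$ and $ij \notin E(G)$. Applying Jacobi's formula $\frac{d}{dx}\log\det M = \mathrm{tr}(M^{-1}\partial_x M)$: the diagonal entries of $\A_G(x)$ are constant ($M_{ii}\equiv 1$, contributing $0$), the non-edge off-diagonal entries contribute $0$ because $B$ vanishes there, and each edge entry $M_{ij}=x$ pairs with $B_{ij}$. Counting ordered pairs,
$$\frac{\tau'(G,x)}{\tau(G,x)} = 2\sum_{\{i,j\}\in E(G)} B_{ij} =: 2T.$$
Since $\tau'(K_2,x)/\tau(K_2,x) = -2x/(1-x^2)$, the target inequality rewrites as $(1-x^2)T + e(G)\,x \geq 0$. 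Taking the trace of $B\,\A_G(x) = I$ also yields the auxiliary identity $\mathrm{tr}(B) = v(G) - 2xT$.

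For $x \in [1/(\overline{d}-1),1)$ the rest is a one-liner. Testing $B \succeq 0$ against the all-ones vector,
$$0 \leq \mathbf{1}^{T}B\mathbf{1} = \mathrm{tr}(B) + 2T = v(G) + 2(1-x)T,$$
so $T \geq -v(G)/(2(1-x))$. A direct algebraic comparison shows $-v(G)/(2(1-x)) \geq -e(G)x/(1-x^2)$ iff $x \geq v(G)/(2e(G)-v(G)) = 1/(\overline{d}-1)$, which is precisely our range, giving the claim there.

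For $x \in [0,1/(\Delta-1)]$ the all-ones test vector is too coarse, and the proof must exploit more of the structure of $B$. The natural ingredients are (a) the PSD constraint on each $2\times 2$ principal submatrix along an edge, giving $B_{ij}\geq -\sqrt{B_{ii}B_{jj}}$; (b) the diagonal inequality $B_{ii}\geq 1/M_{ii}=1$ from the Schur complement; and (c) the trace identity $\mathrm{tr}(B)=v(G)-2xT$. A naive combination of these via AM-GM yields a bound valid only up to $x \leq 1/\Delta$, falling just short of the claimed threshold, so the proof will need a subtler weighting --- perhaps a convex combination of PSD tests adapted to the degree sequence, or a spectral estimate that brings in $\Delta - 1$ through the non-backtracking matrix of $G$ (which, via Ihara's theorem, is naturally connected to $\tau(G,x)$ as hinted in the paper's abstract). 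The main obstacle is pinning down this refined bound that achieves the threshold $1/(\Delta-1)$ rather than $1/\Delta$; once a lower bound on $T$ of the correct form is in place on $[0, 1/(\Delta-1)]$, the comparison with $-e(G)x/(1-x^2)$ reduces to the same type of elementary algebra used in the large-$x$ step.
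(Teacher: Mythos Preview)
Your treatment of the second interval $[\tfrac{1}{\overline d-1},1)$ is correct and essentially identical to the paper's: compute the logarithmic derivative via Jacobi's formula (the paper's Lemma~6.1), test $B\succeq 0$ against the all-ones vector (the paper's Lemma~6.2), and then check the elementary inequality $\tfrac{n}{1-x}\le \tfrac{2e(G)x}{1-x^2}$ on this range.

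For the first interval $[0,\tfrac{1}{\Delta-1}]$, however, you are missing the decisive idea, and your sketched approach will not close the gap. The paper does \emph{not} bound the aggregate $T=\sum_{e}B_{ij}$ via a PSD test vector or a weighted combination of $2\times 2$ minors. Instead it proves an \emph{entrywise} bound: for every edge $(u,v)$ one has $-\tfrac{x}{1-x^2}\le B_{uv}\le 0$, and summing over edges immediately gives $T\ge -e(G)\tfrac{x}{1-x^2}$. The two halves of this sandwich are obtained by two separate arguments. First (Theorem~7.7 in the paper) one shows that $B=\B_G(x)$ is an $M$-matrix on $[0,\tfrac{1}{\Delta-1})$, i.e.\ $B_{uv}\le 0$ for all edges; the proof is an induction on $|E(G)|$ together with a continuity argument: if some $y_G(u,v)=-B_{uv}$ hit $0$ at an interior point, then $z_{G-e}(u,v)=x$ there, contradicting a separate lemma (Lemma~7.5) that forces $z_{G-e}(u,v)<x$ whenever $\B_{G-e}(x)$ is an $M$-matrix and $x<\tfrac{1}{\Delta-1}$. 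Second (Theorem~7.1), once the $M$-matrix property is in hand, the Schur complement identity $A_{11}^{-1}=B_{11}-B_{12}B_{22}^{-1}B_{21}$ (with $A_{11}$ the $2\times 2$ block on $\{u,v\}$) combined with the classical fact that inverses of $M$-matrices are entrywise nonnegative yields $B_{11}\ge A_{11}^{-1}$ entrywise; reading off the off-diagonal entry gives $B_{uv}\ge -\tfrac{x}{1-x^2}$. The reason your AM--GM approach stalls at $1/\Delta$ rather than $1/(\Delta-1)$ is precisely that it ignores this sign information on the $B_{uv}$'s; without it there is no mechanism to produce the $\Delta-1$. The Ihara zeta connection you allude to is used in the paper, but only for the tightness result (Theorem~1.4), not for the logarithmic-derivative inequality itself.
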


Theorem~\ref{log-ineq} is slightly deceiving as it is actually two theorems since $\tau(G,x)$ behaves differently on the two intervals as the next theorem shows. This theorem also shows the tightness or non-tightness of Theorem~\ref{tau-main}.

The girth of a graph $G$ is the length of its shortest cycle.

\begin{Th} \label{tightness}
Let $G$ be a graph with $e(G)$ edges, largest degree $\Delta$ and average degree $\overline{d}$ and girth $g$.
\medskip

\noindent (a) For $x\in [0,\frac{1}{\Delta-1})$ we have 
$$\left| \frac{\ln \tau(G,x)}{e(G)}-\ln (1-x^2)\right|\leq 2\frac{((\Delta-1)x)^g}{1-(\Delta-1)x}.$$
(b) For $x>\frac{1}{\overline{d}-1}$ there exists a positive function $\alpha(\overline{d},x)$ independently of $G$ such that
$$\frac{\ln \tau(G,x)}{e(G)}-\ln (1-x^2)\geq \alpha(\overline{d},x).$$
\end{Th}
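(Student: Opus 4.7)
My plan is to handle both parts by reducing $\ln\tau(G,x)$ to an expression involving the Ihara zeta function of $G$ via convex duality, and then using this reduction to obtain sharp upper and lower bounds.

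\emph{Part (a).} The lower bound is immediate from Theorem~\ref{tau-main}, so only the upper bound requires work. Because $\A_G(x)$ maximizes $\det M$ over the convex set $\mathcal{A}(G,x)$, Lagrangian duality yields
\[
\ln\tau(G,x)\;\le\;\operatorname{tr}(S_0\Lambda)-\ln\det\Lambda-|V(G)|
\]
for every symmetric positive-definite $\Lambda$ supported on the diagonal and edges of $G$, where $S_0$ is any symmetric matrix with $(S_0)_{ii}=1$ and $(S_0)_{ij}=x$ on $E(G)$. The key choice is the \emph{tree precision matrix}
\[
\tilde K(x)\;:=\;\frac{1}{1-x^2}\bigl(I-xA+x^2(D-I)\bigr),
\]
with $A$ and $D$ the adjacency and degree matrices of $G$; this $\tilde K(x)$ is positive definite on $[0,1/(\Delta-1))$ because the spectral radius of the non-backtracking operator is at most $\Delta-1$. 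A short computation (using $\sum_v d_v=2e(G)$) gives the exact cancellation $\operatorname{tr}(S_0\tilde K(x))=|V(G)|$, so the duality bound collapses to $\ln\tau(G,x)\le-\ln\det\tilde K(x)$. Bass's determinantal formula
\[
\det\bigl(I-uA+u^2(D-I)\bigr)\;=\;(1-u^2)^{|V(G)|-|E(G)|}\,\zeta_G(u)^{-1}
\]
transforms this into $\ln\tau(G,x)\le e(G)\ln(1-x^2)+\ln\zeta_G(x)$, and combining with Theorem~\ref{tau-main} sandwiches
\[
0\;\le\;\ln\tau(G,x)-e(G)\ln(1-x^2)\;\le\;\ln\zeta_G(x).
\]
The argument closes with the Euler expansion $\ln\zeta_G(x)=\sum_{k\ge 1}(N_k/k)\,x^k$, where $N_k$ counts closed non-backtracking walks of length $k$: the girth hypothesis forces $N_k=0$ for $k<g$, and the elementary bound $N_k\le 2e(G)(\Delta-1)^{k-1}$ summed geometrically gives the claimed estimate after dividing by $e(G)$.

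\emph{Part (b).} Here Theorem~\ref{tau-main} already yields the non-strict direction, so the real task is a strict positive gap that is uniform in $G$. By Theorem~\ref{log-ineq}, the function $f(x):=\ln\tau(G,x)-e(G)\ln(1-x^2)$ is non-decreasing on $[1/(\overline{d}-1),1)$, so it suffices to exhibit for each such $x$ a primal test matrix $M\in\mathcal{A}(G,x)$ whose $\ln\det M-e(G)\ln(1-x^2)$ exceeds $e(G)\,\alpha(\overline{d},x)$ for some positive $\alpha$ depending only on $\overline{d}$ and $x$. My plan is to optimize the one-parameter family $M_t:=(1-t)I+tJ+(x-t)A$ over admissible $t$, since its determinant factors through the spectrum of $A$; the bound $\lambda_{\max}(A)\ge\overline{d}$ together with $|E(G)|=n\overline{d}/2$ should let the resulting lower bound depend on $G$ only through $\overline{d}$. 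The principal obstacle I anticipate is ensuring strict positivity of $\alpha$ all the way down to the threshold $x\to 1/(\overline{d}-1)^+$: the simplest test $M^{*}=(1-x)I+xJ$ gives a positive gap only for $x$ well away from this boundary, so careful optimization of $t$ (and possibly a second test-matrix family near threshold) is needed to cover the full interval.
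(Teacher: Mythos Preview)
Your Part~(a) is correct and essentially identical to the paper's argument. Your Lagrangian dual bound is exactly the paper's dual problem (Lemma~\ref{dual}): the set $\mathcal{B}(G,x)$ consists of positive definite matrices $B$ supported on the diagonal and edges with $\operatorname{tr}(S_0B)=n$, and the paper plugs in precisely your $\tilde K(x)$ (called $\textbf{Z}_G(x)$ there) to obtain $\tau(G,x)\le (1-x^2)^{e(G)}\zeta_G(x)$. Your zeta bound via $N_k\le 2e(G)(\Delta-1)^{k-1}$ is the combinatorial form of the paper's spectral estimate $|\rho_i|\le\Delta-1$; both yield the stated inequality (yours is in fact slightly sharper by a factor $1/(\Delta-1)$).

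Your Part~(b) has a genuine gap. First, the claim that $\det M_t$ ``factors through the spectrum of $A$'' is false in general: $J$ and $A$ commute only when $G$ is regular, so for irregular graphs you cannot diagonalize $M_t=(1-t)I+tJ+(x-t)A$ in the eigenbasis of $A$, and you have no handle on $\det M_t$. Second, even if you could compute $\det M_t$, extracting a lower bound that depends on $G$ only through $\overline{d}$ is not at all clear---you would need control over the entire spectrum of $A$, not just $\lambda_{\max}\ge\overline{d}$. You yourself flag the difficulty near the threshold, and indeed this approach does not close.

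The paper's argument avoids test matrices entirely. It uses the sharp log-derivative bound $-\tau'(G,t)/\tau(G,t)\le n/(1-t)$ (proved by evaluating $\underline{1}^T\B_G(t)\underline{1}\ge 0$), then integrates from $u=1/(\overline{d}-1)$ to $x$:
\[
\ln\tau(G,x)\ge \ln\tau(G,u)-\int_u^x\frac{n}{1-t}\,dt\ge e(G)\ln(1-u^2)-\int_u^x\frac{n}{1-t}\,dt,
\]
and a direct calculation using $n=2e(G)/\overline{d}$ gives
\[
\frac{\ln\tau(G,x)}{e(G)}-\ln(1-x^2)\ge \int_u^x\frac{2\bigl(t(\overline{d}-1)-1\bigr)}{\overline{d}(1-t^2)}\,dt=:\alpha(\overline{d},x),
\]
which is positive since the integrand is positive for $t>u$. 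The point is that the bound $-\tau'/\tau\le n/(1-x)$ is strictly stronger than $-\tau'/\tau\le 2e(G)x/(1-x^2)$ precisely on $(1/(\overline{d}-1),1)$, and integrating this excess gives the uniform gap directly---no primal witness needed.
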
 

Note that part (a) immediately implies the following theorem. 

\begin{Th} \label{large girth limit}
Let $(G_n)_n$ be a sequence of $d$--regular graphs with girth $g(G_n)\to \infty$. Let $v(G)$ denote the number of vertices of $G$. Assume that $x\in [0,\frac{1}{d-1})$. Then
$$\lim_{n\to \infty}\frac{\ln \tau(G_n,x)}{v(G_n)}=\frac{d}{2}\ln(1-x^2).$$
\end{Th}

In words, part (a) of Theorem~\ref{tightness} and Theorem~\ref{large girth limit} show that for small $x$, the bound provided by Theorem~\ref{tau-main} is asymptotically tight for large girth graphs. On the other hand, for large $x$ the bound cannot be tight even for large girth graphs. 
We will refer to the interval $[0,\frac{1}{\Delta-1}]$ as the first interval, and to the interval $[\frac{1}{\overline{d}-1},1]$ as the second interval. Note that if $G$ is regular, then $[0,\frac{1}{\Delta-1}]\cup [\frac{1}{\overline{d}-1},1]=[0,1]$, and so Theorem~\ref{log-ineq} is a strengthening of Theorem~\ref{tau-main}. For regular graphs Theorem~\ref{tightness} shows a phase transition at the point $\frac{1}{d-1}$. This point is also related to the number of spanning trees. For instance, we will give a new proof for  McKay's  upper bound (\cite{mckay}) to the number of spanning trees of regular graphs, see Theorem~\ref{McKay's bound} below.
\bigskip

We offer two more theorems about regular graphs. A simple application of Theorem~\ref{hom-tau} is the following result.

\begin{Th} \label{Kdd-tau}
Let $v(G)$ denote the number of vertices of $G$. Then for any $d$--regular bipartite graph $G$ we have
$$\tau(G,x)^{1/v(G)}\leq \tau(K_{d,d},x)^{1/(2d)}$$
for all $x\in (-1,1)$.
\end{Th}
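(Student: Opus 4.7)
The plan is to apply Theorem~\ref{hom-tau} with the two graphs $K_{d,d}$ and $G$, together with an appropriate multiplicative inequality for homomorphism densities whose translation to $\tau$ is exactly the claim. Rewriting the desired inequality as
$$\tau(G,x) \leq \tau(K_{d,d},x)^{v(G)/(2d)},$$
I need to verify that the pair $(K_{d,d},G)$ satisfies a multiplicative inequality with powers $v(G)/(2d)$ and $-1$, that is,
$$t(K_{d,d},H)^{v(G)/(2d)} \cdot t(G,H)^{-1} \geq 1 \quad \text{for every nonempty graph } H.$$

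Equivalently, this reads $t(G,H)^{1/v(G)} \leq t(K_{d,d},H)^{1/(2d)}$, and this is precisely the homomorphism density form of the Galvin--Tetali theorem: for every $d$-regular bipartite graph $G$ and every graph $H$,
$$\hom(G,H)^{1/v(G)} \leq \hom(K_{d,d},H)^{1/(2d)}.$$
Dividing both sides by $v(H)$ (since $v(H)^{1/v(G) \cdot v(G)} = v(H)^{1/(2d) \cdot 2d}$) converts this into the density statement above. The Galvin--Tetali inequality is itself proved by a short entropy argument: one bounds the entropy of a uniformly random homomorphism from $G$ to $H$ by expanding over a bipartition of $G$ and applying Shearer-type inequalities, yielding the bound in terms of the entropy of the corresponding distribution on $K_{d,d}$.

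Once the multiplicative inequality is in hand, Theorem~\ref{hom-tau} applied with $\alpha_1 = v(G)/(2d)$, $\alpha_2 = -1$ gives
$$\tau(K_{d,d},x)^{v(G)/(2d)} \cdot \tau(G,x)^{-1} \geq 1 \quad \text{for every } x\in(-1,1),$$
and raising both sides to the $1/v(G)$ power produces $\tau(G,x)^{1/v(G)} \leq \tau(K_{d,d},x)^{1/(2d)}$ as required.

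The main obstacle is not in the logical structure of the argument, which is essentially a two-line deduction, but rather in invoking the correct outside input: one must recognize Galvin--Tetali as the natural discrete counterpart to the determinant inequality, and make sure that its hypotheses (bipartiteness and $d$-regularity of $G$) are exactly what is available. No further obstruction arises because Theorem~\ref{hom-tau} translates such density inequalities to $\tau$-inequalities verbatim.
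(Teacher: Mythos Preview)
Your proposal is correct and matches the paper's own proof essentially line for line: the paper also derives Theorem~\ref{Kdd-tau} directly from Theorem~\ref{hom-tau} together with the Galvin--Tetali inequality $\hom(G,H)^{1/v(G)}\leq \hom(K_{d,d},H)^{1/(2d)}$.
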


As a counterpart of Theorem~\ref{Kdd-tau} we prove that among all regular graphs the complete graph maximizes the quantity $\tau(G,x)^{1/v(G)}$.

\begin{Th} \label{regular-complete} Let $v(G)$ denote the number of vertices of $G$. Then for any $d$--regular graph $G$ and $x\in (0,1)$ we have
$$\tau(G,x)^{1/v(G)}\leq \tau(K_{d+1},x)^{1/v(K_{d+1})}.$$
\end{Th}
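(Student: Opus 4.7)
The plan is to exploit the large-deviation / logarithmic-limit apparatus underlying Theorem~\ref{hom-tau}. In the proof of that theorem, $\tau(G,x)$ is realised as a logarithmic limit of homomorphism densities $t(G,H_N^x)$ into reflexive sphere threshold graphs $H_N^x$ on $S^{N-1}$ (edges are pairs $\{u,v\}$ with $\langle u,v\rangle\ge x$, and every vertex carries a loop since $\langle v,v\rangle=1$). Accordingly, the target inequality $\tau(G,x)^{1/v(G)}\le \tau(K_{d+1},x)^{1/(d+1)}$ will follow from the homomorphism-density counterpart
$$ t(G,H_N^x)^{1/v(G)} \leq t(K_{d+1},H_N^x)^{1/(d+1)} $$
holding asymptotically as $N\to\infty$, together with the log-limit passage of \cite{Sz1} that underpins Theorem~\ref{hom-tau}.

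A direct application of Theorem~\ref{hom-tau} with $G_1=K_{d+1}$, $G_2=G$ and powers $1/(d+1)$, $-1/v(G)$ is \emph{not} available, because the multiplicative hom-inequality fails for general $H$: e.g.~with $G=C_4$ and $H=K_3$ one has $t(C_4,K_3)=t(K_3,K_3)=2/9<1$, and the exponents $1/v(G)=1/4$ and $1/(d+1)=1/3$ then reverse the desired direction. One must therefore genuinely use reflexivity. For reflexive $H$ the number of homomorphisms $\hom(G,H)$ behaves like a hard-core / antiferromagnetic partition function, and $K_{d+1}$-extremality among $d$-regular $G$ is the natural analogue of the Cutler--Radcliffe and Kahn--Galvin theorems in the reflexive regime.

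The argument then has two steps: (i) prove the homomorphism inequality for the specific sphere graphs $H_N^x$, exploiting their $O(N)$-rotational invariance and the Gaussian concentration of measure on $S^{N-1}$ to reduce the combinatorial question to an analytic optimisation of the exponential rate of $\hom(G,H_N^x)$ over $d$-regular $G$; and (ii) pass to the logarithmic limit $N\to\infty$, using the graph-limits machinery of \cite{Sz1} already employed in the proof of Theorem~\ref{hom-tau} to convert the sphere inequality into the $\tau$-inequality.

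The principal obstacle is step (i): the inequality depends essentially on the loops at every vertex of $H_N^x$ and on the full rotational symmetry of $S^{N-1}$. A sufficiently fine quantitative control will be required so that the bound survives the logarithmic passage in step (ii), which is otherwise a routine consequence of the apparatus developed for Theorem~\ref{hom-tau}.
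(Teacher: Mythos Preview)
Your proposal has a genuine gap: step (i) is the entire content of the theorem, and you have not proved it. You only gesture at ``rotational invariance and Gaussian concentration'' without giving an argument, and you explicitly flag this as ``the principal obstacle.'' A proof proposal that reduces the problem to an unproved statement of comparable difficulty is not a proof. Moreover, your description of the spherical graphons is incorrect: in Lemma~\ref{hom-tau-lemma} the set used is the thin band $S_\varepsilon=[x-\varepsilon,x+\varepsilon]$, \emph{not} the threshold interval $[x,1]$. Hence ${\rm Sph}(S_\varepsilon,n)$ is not reflexive (indeed $\langle v,v\rangle=1\notin S_\varepsilon$ for small $\varepsilon$), so the reflexive/antiferromagnetic heuristic you invoke does not apply to the graphons that actually arise in Theorem~\ref{hom-tau}. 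Even if you switched to threshold graphons, you would then need both a Cutler--Radcliffe type extremality statement for these specific targets and an argument that the threshold version still recovers $\tau(G,x)$ in the limit; neither is supplied.

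For comparison, the paper's proof is completely different and purely analytic, avoiding homomorphism densities altogether. It combines the logarithmic-derivative formula $-\tau'(G,x)/\tau(G,x)=2\sum_{(u,v)\in E}y_G(u,v)$ (Lemma~\ref{log-derivative}) with the Schur-complement inequality of Lemma~\ref{Schur complement bounds}(b), namely $\tfrac{2}{1-x}\le (1+xY_u)+(1+xY_v)+2y_G(u,v)$ for every edge $(u,v)$. Summing the latter over all edges of a $d$-regular $G$ gives a lower bound $2\sum y_G(u,v)\ge \tfrac{ndx}{(1-x)(1+dx)}$, which is exactly $-\tfrac{n}{d+1}\,\tau'(K_{d+1},x)/\tau(K_{d+1},x)$. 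Integrating the resulting pointwise inequality for $\tau'/\tau$ from $0$ to $x$ yields the claim. This route is elementary and self-contained; no homomorphism inequality is needed.
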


\newpage

\textbf{Notations.} We use standard notations. If $G=(V,E)$ is a graph, then $|V(G)|=v(G)$ denotes the number of vertices,  and $|E(G)|=e(G)$ denotes the number of edges. In general, when $G$ is fixed, then $n$ will also denote the number of vertices. The largest degree will be denoted by $\Delta$, while $\overline{d}$ denotes the average degree. The set of neighbors of a vertex $u$ is denoted by $N_G(u)$. $K_n$ denotes the complete graph on $n$ vertices. Similarly, $K_{a,b}$ denotes the complete bipartite graph with parts of size $a$ and $b$.

For a vector $x=(x_1,\dots ,x_n)\in \R^n$ the norm $\|x\|_p=\left(\sum_{i=1}^n|x_i|^p\right)^{1/p}$.
$\langle u,v\rangle$ denotes the scalar product of the vectors $u,v\in \R^n$, and $\|u\|_2=\langle u,u\rangle^{1/2}$ is the norm of a vector $u$. We also use the notation $\mathbb{S}_{n-1}=\{x\ |\ x\in \mathbb{R}^n,\  \|x\|_2=1\}$. The Hadamard product of the $n\times n$ matrices $A$ and $B$ is denoted by $A\circ B$, where $(A\circ B)_{ij}=A_{ij}B_{ij}$.

\bigskip

\textbf{This paper is organized as follows.} First of all,  we call attention to the fact that in the appendix one can find all frequently used tools from probability theory and matrix analysis. 

In Section~\ref{basic-properties} we introduce some basic properties of $\tau(G,x)$, the matrix $\A_G(x)$, and we set up an optimization problem for the inverse  matrix of $\A_G(x)$, this will be a key tool throughout the paper. 

In Section~\ref{large-deviation} we prove a large deviation result for random vectors chosen from a large dimensional sphere.

In Section~\ref{homomorphism} we use  results of Section~\ref{large-deviation} to connect our theory with graph homomorphisms. In particular, we prove Theorems~\ref{hom-tau} and \ref{Kdd-tau} in this section. 

In Sections~\ref{conditional}  we give a useful lemma that provides an algorithm to compute numerically $\tau(G,x)$ and $\A_G(x)$ with high precision. 

In Section~\ref{operations} we prove Theorem~\ref{tau-main} through the study of certain graph operations. Theorem~\ref{main2} immediately follows from this theorem.

In Section~\ref{second-interval} we prove Theorems~\ref{log-ineq} and \ref{tightness} for the second interval. 

In Section~\ref{M-graphs} we prove Theorem~\ref{log-ineq} for the first interval and Theorem~\ref{regular-complete}.  

In Section~\ref{Ihara-zeta_section} we relate $\tau(G,x)$ to the so-called Ihara zeta function and  prove Theorem~\ref{tightness} for the first interval.

In Section~\ref{spanning tree} we use our theory to give a new proof of a result of B. McKay on the number of spanning trees of regular graphs. 

In Section~\ref{multivariate} we give the versions of our theorems in the multivariate case without proof. 

In Section~\ref{open problems} we give some open problems.

Section~\ref{preliminaries} contains useful results about probability theory and matrix analysis that we use throughout the paper.
\bigskip

\begin{tikzpicture}[node distance=1.5cm,
    every node/.style={fill=white, font=\sffamily}, align=center]
\node (Sect2) [base] {Section 2 \\  Basics};
\node (Sect5) [base, below of=Sect2] {Section 6 \\ Sidorenko-type inequality};
\node[xshift=-3.2cm] (Sect3) [base, left of=Sect5] {Section 3 \\ Large deviation principle};
\node[xshift=3.2cm] (Sect6) [base, right of=Sect5] {Section 7 \\ Logarithmic derivative};
\node[xshift=-0.6cm] (Sect4) [base, below of=Sect3] {Section 4 \\ Homomorphisms};
\node (Sect4.5) [base, below of=Sect4] {Section 5 \\ Recoupling algorithm};
\node[xshift =-1.7 cm] (Sect9) [base, below of=Sect5] {Section 9 \\ Ihara zeta function};
\node[xshift =1.7 cm] (Sect10) [base, below of=Sect5] {Section 10 \\ Spanning trees};
\node[xshift =0.4 cm] (Sect8) [base, below of=Sect6] {Section 8 \\ Total positivity};
\node[xshift =2.2 cm] (Sect11) [base, right of=Sect4.5] {Section 11 \\ Multivariate case};
\node[xshift =1.8 cm] (Sect12) [base, right of=Sect11] {Section 12 \\ Open problems};
\node (Sect13) [base, below of=Sect8] {Section 13 \\ Appendix};
\draw (Sect2) -- (Sect3);
\draw (Sect2) -- (Sect5);
\draw (Sect2) -- (Sect6);
\draw (Sect3) -- (Sect4);
\draw (Sect5) -- (Sect9);
\draw (Sect5) -- (Sect10);
\draw (Sect6) -- (Sect8);

\end{tikzpicture}

\newpage

\section{Basic properties of $\A_G(x)$ and $\tau(G,x)$} \label{basic-properties}

\subsection{The matrix $\A_G(x)$.} First we study the matrix $\A_G(x)$. The following is just Dempster's theorem. For sake of convenience we include a proof.

\begin{Th} \label{unique-maximizer} The determinant maximization problem over $\mathcal{A}(G,x)$ has a unique maximizer $\A_G(x)$. If $u$ and $v$ are not adjacent vertices, then $\A_G(x)^{-1}_{u,v}=0.$
\end{Th}

\begin{proof} The set $\mathcal{A}(G,x)$ is convex, and it is non-empty since the matrix whose all diagonal elements are $1$ and off-diagonal elements are $x$ is positive definite. It is also bounded since for any matrix $A\in \mathcal{A}(G,x)$ we have $|A_{u,v}|\leq 1$ to make sure that $\det \left(\begin{array}{cc} A_{uu} & A_{uv} \\ A_{vu} & A_{vv}\end{array}\right)=\det \left(\begin{array}{cc} 1 & A_{uv} \\ A_{vu} & 1\end{array}\right)\geq 0$. The closure of $\mathcal{A}(G,x)$ is thus compact. So the function $\det$ has a maximum on it. The maximum cannot be achieved on the boundary since $\det\equiv 0$ on it. The function $\det$ is also strictly log-concave on $\mathcal{A}(G,x)$ (see Theorem~\ref{log-concave-pos-def}), so the maximizer is unique.

For non-adjacent vertices $u$ and $v$ consider the matrix $F_{u,v}$ which takes value $1$ at the entries $(u,v)$ and $(v,u)$ and $0$ everywhere else.  Then for small enough $t$, the matrix $A(t)=\A_G(x)+tF_{u,v}\in \mathcal{A}(G,x)$. We also have
$$\det A(t)=d_0+d_1t+d_2t^2$$
for some $d_0,d_1,d_2$. Since $\det A(t)\leq \det \A_G(x)=d_0$ for all $t$ in an open neighborhood of $0$ we get that $d_1=0$ and $d_2\leq 0$.
Let $\B_G(x)$ be the inverse of $\A_G(x)$. Then
$$d_1=2\det(\A_G(x))\B_G(x)_{u,v}.$$
Hence $\B_G(x)_{u,v}=0$ for all non-adjacent vertices $u$ and $v$. 

\end{proof}

\begin{Def}
We will denote the inverse of $\A_G(x)$ by $\B_G(x)$ throughout the whole paper. It will be convenient to parametrize $\B_G(x)$ as follows: $\B_G(x)_{u,v}=-y_G(u,v)$ if $(u,v)\in E(G)$, $\B_G(x)_{u,u}=1+x\sum_{v\in N(u)}y(u,v)$. (We have seen that $\B_G(u,v)=0$ if $(u,v)\notin E(G)$. We further study $\B_G(x)$ in Section~\ref{dual-optimization}.) We will also use the notation $z_G(u,v)=\A_G(x)_{u,v}$. Thus $z_G(u,u)=1, z_G(u,v)=x$ if $(u,v)\in E(G)$. 
\end{Def} 

\subsection{Properties of the function $\tau(G,x)$.}

\begin{Lemma}
The function $\tau(G,x)$ is monotone decreasing logarithmically concave function on the interval $(0,1)$.
\end{Lemma}

\begin{proof} First we prove the logarithmic concavity. Observe that for $\alpha\in (0,1)$ the matrix $\alpha\A_G(x)+(1-\alpha)\A_G(y)\in \mathcal{A}(G,\alpha x+(1-\alpha) y)$. Hence
\begin{align*}
\tau(G,\alpha x+(1-\alpha) y)&\geq \det(\alpha\A_G(x)+(1-\alpha)\A_G(y))\\
                             &\geq (\det \A_G(x))^{\alpha}(\det \A_G(y))^{1-\alpha}\\
														 &=\tau(G,x)^{\alpha}\tau(G,y)^{1-\alpha},
\end{align*}
where we used the fact that $\det$ is a logarithmically concave function on the set of positive definite matrices (Theorem~\ref{log-concave-pos-def}).

To prove the monotonicity we observe that $\A(x)\circ \A(y)\in \mathcal{A}(G,xy)$, where $\circ$ is the Hadamard product of two matrices. Combining this fact with Oppenheim's inequality (Theorem~\ref{Opponheim}) we get that
$$\tau(G,xy)\geq \det(\A_G(x)\circ \A_G(y))\geq \det(\A_G(x))\prod_{i=1}^n\A_G(y)_{ii}=\det(\A_G(x))=\tau(G,x).$$
This proves the monotonicity. 

\end{proof}

\begin{Rem}
Another notable inequality of Oppenheim asserts that for positive semidefinite matrices $A$ and $B$ we have
$$\det(A)\prod_{i=1}^nb_{ii}+\det(B)\prod_{i=1}^na_{ii}\leq \det(A\circ B)+\det(A)\det(B).$$
Using this inequality to $\A_G(x)$ and $\A_G(y)$ we get that
$$\tau(G,x)+\tau(G,y)\leq \tau(G,xy)+\tau(G,x)\tau(G,y).$$
\end{Rem}

\begin{Lemma} \label{bipartite-symmetry}
For a bipartite graph $G=(A,B,E)$ we have $\tau(G,x)=\tau(G,-x)$ for $x\in (-1,1)$.
\end{Lemma}

\begin{proof}
Let $D$ be the $V\times V$ diagonal matrix having value $1$ at the elements corresponding to the vertices of $A$, and $-1$ at the elements corresponding to the vertices of $B$. Then  $\Psi: M \mapsto DMD$ maps $\mathcal{A}(G,x)$ to $\mathcal{A}(G,-x)$ bijectively such that $\det(M)=\det(\Psi(M))$. This immediately implies the claim of the lemma.
\end{proof}

\subsection{Dual optimization problem} \label{dual-optimization}

In this part we set up a dual optimization problem for $\B_G(x)$. Indeed, the following dual optimization problem holds. 

\begin{Lemma} \label{dual} Let $\mathcal{B}(G,x)$ be the set of positive definite matrices $B=B(\underline{t})$ which are parametrized as follows:
\begin{enumerate}
\item if $(u,v)\notin E(G)$, then $B_{u,v}=0$,
\item if $(u,v)\in E(G)$, then $B_{u,v}=-t(u,v)$,
\item for $u\in V(G)$ we have $B_{u,u}=1+x\sum_{v\in N_G(u)}t(u,v)$.
\end{enumerate}
Then $\det(B)$ is a strictly log-concave function on $\mathcal{B}(G,x)$, and it takes its maximum at the unique $B(\underline{y})$ for which $B(\underline{y})^{-1}=\A_G(x)$, i. e., $B(\underline{y})=\B_G(x)$.
\end{Lemma}

Lemma~\ref{dual} follows from general duality theorems of positive definite programming, but in this special case we can give a self-contained simple proof.

\begin{proof} For a $B\in \mathcal{B}(G,x)$ and a vertex $u\in V(G)$ we have 
$$(B\cdot \A_G(x))_{uu}=1.$$
Consequently, $\mathrm{Tr}(B\cdot \A_G(x))=n$.
The matrix $B\cdot \A_G(x)$ is not necessarily symmetric, so it may not be positive definite. So let us consider the matrix $C=B^{1/2}\A_G(x)B^{1/2}$ which is symmetric and positive definite.
Then 
$$\det(B\cdot \A_G(x))=\det(C)\leq \left(\frac{\mathrm{Tr}(C)}{n}\right)^n=\left(\frac{\mathrm{Tr}(B\cdot \A_G(x))}{n}\right)^n=1.$$
Hence
$$\det(B)\leq (\det \A_G(x))^{-1}=\det \B_G(x).$$
We have seen that $\B_G(x)\in \mathcal{B}(G,x)$ (see Theorem~\ref{unique-maximizer}). The function $\det$ is strictly log-concave on the set of positive definite matrices. The set $\mathcal{B}(G,x)$ is convex. So the function $\det$  has a unique maximizer on $\mathcal{B}(G,x)$ which must be $\B_G(x)$.

\end{proof}

\subsection{Equations for $\A_G(x)$ and $\B_G(x)$}

Recall that we use the notation $z_G(u,v)=\A_G(x)_{u,v}$, and $y_G(u,v)=-\B_G(x)_{u,v}$ if $(u,v)\in E(G)$. Sometimes we drop $G$ from the subscript if it is clear from the context.

By the parametrization we get that  $(\A_G(x)\cdot \B_G(x))_{u,u}=1$ is automatically satisfied. From $(\A_G(x)\cdot \B_G(x))_{u,w}=0$ for $u\neq w$ we get that for $(u,w)\in E(G)$ we have
$$x=(1-x^2)y_G(u,w)+\sum_{{v\in N_G(w)} \atop {v\neq u}}(z_G(u,v)-x^2)y_G(v,w),$$
and for $(u,w)\notin E(G)$ we get that
$$z_G(u,w)=\sum_{v\in N_G(w)}(z_G(u,v)-xz_G(u,w))y_G(v,w).$$
In the latter case we can rewrite it as
$$z_G(w,u)=\sum_{v\in N_G(w)}z_G(v,u)\frac{y_G(v,w)}{1+x\sum_{r\in N_G(w)}y_G(r,w)}.$$

%Fun fact: there are $\binom{n}{2}$ unknowns ($\binom{n}{2}-e(G)$ $z(u,v)$'s and $e(G)$ $y(u,v)$'s), and there are $n(n-1)$ equations provided by $\A_G(x)\cdot \B_G(x)=I$. 
%\medskip

\begin{Rem}
These equations enable us to compute the functions $z_G(u,v)$ and $y_G(u,v)$ for several nice families of graphs like complete bipartite graphs and strongly regular graphs.
\end{Rem}

\section{Large deviation principle} \label{large-deviation}

It is well known that if $n\in\mathbb{N}$ is a fixed number and $k$ is big compared to $n$, then if we choose independent uniform vectors $v_1,v_2,\dots,v_n$ in the sphere $\mathbb{S}_{k-1}=\{x\ |\ x\in\mathbb{R}^k, \|x\|_2=1\}$, then with probability close to one the vectors are close to be pairwise orthogonal. It will be important for us to estimate the probability of the atypical event that the scalar product matrix $\langle v_i,v_j\rangle_{1\leq i,j\leq n}$ is close to some matrix $A$ that is separated from the identity matrix. Let $\lambda_n$ denote the Lebesgue measure on the space of symmetric $n\times n$ matrices with $1$'s in the diagonal. Since this space can be identified with $\mathbb{R}^{n(n-1)/2}$, the Lebesgue measure $\lambda_n$ is just the usual measure on that space. In this section we give a simple formula for the density function $\langle v_i,v_j\rangle_{1\leq i,j\leq n}$ relative to the Lebesgue measure $\lambda_n$. Using this formula we prove a large deviation principle for the scalar product matrices of random vectors. 

The following theorems and proofs extensively use some results on chi $(\chi)$, chi-square $(\chi^2)$, Wishart distributions, multivariate gamma function. All these concepts and results can be found in the Appendix. 

\begin{Th}\label{sphere} Assume that $k\geq n\geq 2$ are integers. Let $v_1,v_2,\dots,v_n$ be independent, uniform random elements of the sphere $\mathbb{S}_{k-1}$ and let $M(n,k)$ be the $n\times n$ matrix with entries $M(n,k)_{i,j}:=\langle v_i,v_j\rangle$. The probability density function $f_{n,k}$ of $M(n,k)$ is supported on the set $\mathcal{M}_n$ of positive semidefinite $n\times n$ matrices with $1$'s in the diagonal entries and is given by the formula $$f_{n,k}(M)=\det(M)^{(k-n-1)/2}\Gamma(k/2)^n\Gamma_n(k/2)^{-1}$$ where $\Gamma_n$ is the multivariate $\Gamma$-function.
\end{Th}

\begin{proof} For $1\leq i\leq n,1\leq j\leq k$ let $Z_{ij}$ be independent standard normal distributions. Let $X_i=(\sum_{j=1}^kZ_{ij}^2)^{1/2}$. Then $\{X_i\}_{i=1}^n$ is a system of $n$ independent random variables with $\chi_k$ distributions. Set $v_i=\frac{1}{X_i}(Z_{i1},\dots ,Z_{ik})$. Then $v_1,\dots ,v_n$ are uniform random elements of the sphere $\mathbb{S}_{k-1}$, because of the spherical symmetry of the $k$ dimensional standard normal distribution. If $G$ denotes the matrix with elements $Z_{ij}$, then $M'(n,k)=GG^T$ is from the Wishart distribution corresponding to the $n\times n$ identity matrix. It is also the matrix with elements $\langle X_iv_i,X_jv_j\rangle$. It follows that the density function $\tilde{f}_{n,k}$ of $M'(n,k)$ is supported on positive semidefinite matrices and is given by 
$$\tilde{f}_{n,k}(M)=\det(M)^{(k-n-1)/2}e^{-{\rm Tr}(M)/2}2^{-kn/2}\Gamma_n(k/2)^{-1}.$$
The next step is to compute the conditional distribution of $M'(n,k)$ in the set $M'(n,k)_{i,i}=X_i^2=1$ for $1\leq i\leq n$. Since  $X_1,\dots ,X_n$ is a system of $n$ independent random variables with $\chi_k$ distributions and by the fact that the density function $g_k(x)$ of $\chi_k^2$ is
$$g_k(x)=x^{k/2-1}e^{-x/2}2^{-k/2}\Gamma(k/2)^{-1}$$
the statement of the proposition follows from $f_{n,k}(M)=\tilde{f}_{n,k}(M)/g_k(1)^n$ for $M\in\mathcal{M}_n$ and $f_{n,k}(M)=0$ for $M\notin\mathcal{M}_k$. 
\end{proof}

\begin{Rem} It is a nice fact that Theorem~\ref{sphere} allows us to give an explicit formula for the volume of the spectahedron $\mathcal{M}_n$. (In general,  the set of $n \times n$ positive semidefinite matrices forms a convex cone in $\R^{n\times n}$, and a spectrahedron is a shape that can be formed by intersecting this cone with a linear affine subspace.) If $k=n+1$, then $f_{n,k}$ is a constant function and by the fact that it is a density function, this constant is the inverse volume of $\mathcal{M}_n$. We obtain that $\lambda_n(\mathcal{M}_n)=\Gamma((n+1)/2)^{-n}\Gamma_n((n+1)/2)$.
\end{Rem}

\begin{Lemma}\label{asym} For $n\geq 2$ we have that $$\lim_{n\to\infty}\frac{\Gamma(k/2)^n\Gamma_n(k/2)^{-1}}{(k/(2\pi))^{n(n-1)/4}}=1.$$
\end{Lemma}

\begin{proof}
Set $c_r=\pi^{-1/2}\Gamma(r/2)/\Gamma((r-1)/2)$. It is straightforward from the formulas that
$$\Gamma(k/2)^n\Gamma_n(k/2)^{-1}=c_k^{n-1}c_{k-1}^{n-2}\dots c_{k-n+2}.$$ 
 It is well known that $\lim_{r\to\infty}\Gamma(r)\Gamma(r-\alpha)^{-1}r^{-\alpha}=1$. It follows that $\lim_{r\to\infty} c_r(r/(2\pi))^{-1/2}=1$ which completes the proof.
\end{proof}

Now we are ready to formulate and prove our large deviation principle. Let us denote by $\mu_{n,k}$ the probability measure corresponding to the random matrix model $M(n,k)$ defined in Theorem~\ref{sphere}. We have that $\mu_{n,k}$ is concentrated on the closed set $\mathcal{M}_n$. If $k\geq n\geq 2$, then $\mathcal{M}_n$ is a compact convex set of positive measure in the space of symmetric $n\times n$ matrices with ones in the diagonal. For a measurable function $f:\mathcal{M}_n\rightarrow\mathbb{R}$ we denote by $\|f\|_\infty$ the essential supremum of $f$ relative to the measure $\lambda_n$, that is, 
$$\|f\|_{\infty}=\inf\{t>0\ |\ \lambda_n\left(M\in \mathcal{M}_n: |f(M)|>t\right)=0\}.$$
Note that $\|f\|_\infty$ can differ from $\sup_{M\in\mathcal{M}_n}f(M)$ because changes in $f$ on $0$ measure sets are ignored. In general will use the norms $\|.\|_p$ for functions on $\mathcal{M}_n$.

\begin{Th}[Large deviation principle on the sphere]\label{ldsph} Let $n\geq 2$ be a fixed integer. Let $A\subseteq\mathcal{M}_n$ be a Borel measurable set. We have that
$$ \lim_{k\to\infty} \frac{1}{k}\ln(\mu_{n,k}(A))=\frac{1}{2}\ln\|1_A\det\|_\infty.$$
\end{Th}

\begin{proof} Set $c_{n,k}=\Gamma(k/2)^n\Gamma_n(k/2)^{-1}$. From Theorem~\ref{sphere} we have
$$\mu_{n,k}(A)=c_{n,k}\int_{A}(\det)^{(k-n-1)/2} d\lambda_n=c_{n,k}\int_{\mathcal{M}_n}1_A(\det)^{(k-n-1)/2} d\lambda_n=c_{n,k}\|1_A\det\|_{(k-n-1)/2}^{(k-n-1)/2}.$$
It follows that
$$\frac{1}{k}\ln(\mu_{n,k}(A))=\ln(c_{n,k}^{1/k})+\frac{k-n-1}{2k}\ln\|1_A\det\|_{(k-n-1)/2}.$$
From Lemma \ref{asym} we get that
$$\lim_{k\to\infty} \ln(c_{n,k}^{1/k})=0.$$
Now the statement of the theorem follows from $\lim_{p\to\infty}\|1_A\det\|_p=\|1_A\det\|_\infty$.
\end{proof}

\section{Homomorphism and spherical graphons} \label{homomorphism}

A {\bf graphon} (see \cite{LS}) is a symmetric measurable function of the form $W:\Omega^2\rightarrow [0,1]$ where $(\Omega,\mu)$ is a standard probability space. If $G$ is a finite graph, then it makes sense to introduce the "density" of $G$ in $W$ using the formula 
$$t(G,W)=\int_{x\in\Omega^{V(G)}}\prod_{(i,j)\in E(G)}W(x_i,x_j)~ d\mu^n.$$ 
Note that the conjecture of Sideronko was originally stated in this integral setting,  and it says that $t(G,W)\geq t(K_2,W)^{e(G)}$ holds for every bipartite graph $G$ and graphon $W$.

In this section we prove Theorem~\ref{hom-tau} using special graphons that we call {\bf spherical graphons}.
Let $S\subseteq [-1,1]$ be a Borel measurable set and let $n$ be a natural number. Let us define the graphon ${\rm Sph}_{S,k}:\mathbb{S}_k\times \mathbb{S}_k\rightarrow [0,1]$ such that
${\rm Sph}_{S,k}(x,y)=1$ if $\langle x,y\rangle \in S$ and ${\rm Sph}_{S,k}(x,y)=0$ if $\langle x,y\rangle\notin S$.

For a Borel measurable set $S\subseteq [-1,1]$ and a graph $G$ let $\mathcal{A}(G,S)$ denote the set of positive semidefinite $V(G)\times V(G)$ matrices $M$ such that the diagonal entries of $M$ are all $1$'s and $M_{i,j}\in S$ holds for every $(i,j)\in E(G)$. It is clear that using the notation from the previous section we have that
$$t(G,{\rm Sph}_{S,k-1})=\mu_{v(G),k}(\mathcal{A}(G,S)) $$
It follows from Theorem~\ref{ldsph} that
\begin{equation}\label{limitdens}
\lim_{k\to\infty} \frac{1}{k}\ln(t(G,{\rm Sph}_{S,k})=\frac{1}{2}\ln\|1_{\mathcal{A}(G,S)}\det\|_{\infty}.
\end{equation}
Now we get the next lemma.

\begin{Lemma}\label{denslem} Assume that $S\subseteq [-1,1]$ is a Borel set and $G$ is a Sidorenko graph.
Then
$$\|1_{\mathcal{A}(G,S)}\det\|_{\infty}\geq \|1_{\mathcal{A}(K_2,S)}\det \|_{\infty}^{e(G)}.$$
\end{Lemma}

\begin{proof} 
The Sidorenko property of $G$ implies that for every $k$ we have that
$$\frac{1}{k}\ln(t(G,{\rm Sph}_{S,k})\geq e(G)\cdot \frac{1}{k}\ln(t(K_2,{\rm Sph}_{S,k})).$$
Then (\ref{limitdens}) completes the proof by taking the limit $k\to\infty$.
\end{proof}

\begin{Lemma} \label{hom-tau-lemma} Let $S_{\varepsilon}:=[x-\varepsilon,x+\varepsilon]\cap (-1,1)$. Then
$$\frac{1}{2}\ln \tau(G,x)=\lim_{\varepsilon\to 0}\lim_{k\to \infty}\frac{1}{k}\ln(t(G,{\rm Sph}_{S_{\varepsilon},k})).$$
\end{Lemma}

\begin{proof} We know that
$\lim_{\epsilon\to 0}\|1_{\mathcal{A}(G,S_\epsilon)}\det\|_\infty=\tau(G,x)$ holds for every graph $G$.
\end{proof}

\begin{proof}[Proof of Theorems~\ref{hom-tau} and \ref{Kdd-tau}.]
Theorem~\ref{hom-tau} is an immediate consequence of Lemma~\ref{hom-tau-lemma}.
Theorem~\ref{Kdd-tau} follows from Theorem~\ref{hom-tau} and the following theorem of Galvin and Tetali \cite{GalTet}:
for any $d$--regular bipartite graph $G$ and any graph $H$ we have
$$\hom(G,H)^{1/v(G)}\leq \hom(K_{d,d},H)^{1/(2d)}.$$
\end{proof}

\section{Conditional independent couplings} \label{conditional}

In this section we review conditional independent couplings of Gaussian fields. 
On the one hand, this will lead to an efficient numeric algorithm to compute $\A_G(x)$. And on the other hand, it will enable us to understand $\tau(G,x)$ for graphs $G$ that are obtained from another graphs by an operation called clique sum, for details see Theorem~\ref{clique-sum}. The following lemma is the main tool in this section.

\begin{Lemma}\label{linalg}
Assume that $X$ and $Y$ are two finite sets with $X\cap Y=Z$ and $X\cup Y=Q$. Assume furthermore $A\in\mathbb{R}^{X\times X}$ and $B\in\mathbb{R}^{Y\times Y}$ are two positive definite matrices such that their submatrices  $A_{Z\times Z}$ and $B_{Z\times Z}$ are equal to some matrix $C\in\mathbb{R}^{Z\times Z}$. Let $\tilde{A},\tilde{B}$ and $\tilde{C}$ be the matrices in $\mathbb{R}^{Q\times Q}$ obtained from $A^{-1},B^{-1}$ and $C^{-1}$ by putting zeros to the remaining entries. Then the matrix
$$D:=(\tilde{A}+\tilde{B}-\tilde{C})^{-1}$$ satisfies the following conditions.
\begin{enumerate}
\item $D_{X\times X}=A$ , $D_{Y\times Y}=B$,
\item $D$ is positive definite,
\item $\det(D)=\det(A)\det(B)\det(C)^{-1}$.
\end{enumerate}
\end{Lemma}

\begin{proof} The statement can be checked with elementary linear algebraic methods. To highlight the connection to probability theory we give a probabilistic proof which is also more elegant. We can regard $A,B$ and $C$ as covariance matrices of Gaussian distributions $\mu_A,\mu_B$ and $\mu_C$ on $\mathbb{R}^X,\mathbb{R}^Y$ and $\mathbb{R}^Z$ with density functions $f_A,f_B$ and $f_C$. The condition $A_{Z\times Z}=B_{Z\times Z}$ is equivalent with the fact that the marginal distribution of both $\mu_A$ and $\mu_B$ on $\mathbb{R}^Z$ is equal to $\mu_C$. The conditional independent coupling of $\mu_A$ and $\mu_B$ over the marginal $\mu_C$ has density function
$$f(v)=(2\pi)^{-|Q|/2}e^{-\frac{1}{2}(vP_XA^{-1}P_Xv^T+vP_YB^{-1}P_Yv^T-vP_ZC^{-1}P_Zv^T)}$$
where $P_X,P_Y$ and $P_Z$ are the projections to the coordinates in $X,Y$ and $Z$.
We have by the definition of $D$ that
$$f(v)=(2\pi)^{-|Q|/2}e^{-\frac{1}{2} vD^{-1}v^T}$$
and thus $f$ is the density function of the Gaussian distribution $\mu$ with covariance matrix $D$.
The first property of $D$ follows from the fact that the marginals of $\mu$ on $X$ and $Y$  have covariance matrices $A$ and $B$.
The second property of $D$ follows from the fact that it is a covariance matrix of a non-degenerate Gaussian distribution. The third property follows from the fact that
$$0=\mathbb{D}(\mu)-\mathbb{D}(\mu_A)-\mathbb{D}(\mu_B)+\mathbb{D}(\mu_C)$$ holds for the differential entropies in a conditionally independent coupling. On the other hand the right hand side is equal to
$$\ln(\det(D))-\ln(\det(A))-\ln(\det(B))+\ln(\det(C)).$$
\end{proof}

We will refer to the matrix $D$ as the \emph{conditionally independent coupling} of $A$ and $B$ (over $C$) and we denote it by $A\curlyvee B$.

\begin{Th} \label{clique-sum}
Let $G_1$ and $G_2$ be two graphs. Assume that $S_1\subseteq V(G_1)$ and $S_2\subseteq V(G_2)$ are subsets such that the induced subgraphs $G_1[S_1]$ and $G_2[S_2]$  are cliques of size $k$. Let $\varphi:S_1 \to S_2$ be a bijection, and $G=G_1+_{\varphi}G_2$ be the graph obtained from $G_1\cup G_2$ by identifying vertex $v\in S_1$ with $\varphi(v)\in S_2$. Then $\A_G(x)=\A_{G_1}(x)\curlyvee \A_{G_2}(x)$. In particular
$$\tau(G,x)=\frac{\tau(G_1,x)\tau(G_2,x)}{\tau(K_k,x)}$$
for all $x$ for which $\tau(G_1,x)$ and $\tau(G_2,x)$ make sense. Furthermore, for an $e=(u,v)\in E(G)$ we have
$$y_G(u,v)=y_{G_1}(u,v)+y_{G_2}(u,v)-y_{K_k}(u,v)$$
if $u$ and $v$ are both in the common clique $K_k$ of $G_1$ and $G_2$, and is equal to $y_{G_1}(u,v)$ or $y_{G_2}(u,v)$ if $(u,v)\in E(G_1)\setminus E(G_2)$ or $(u,v)\in E(G_2)\setminus E(G_1)$, respectively.
\end{Th}

\begin{proof}
Let us apply Lemma~\ref{linalg} with $X=V(G_1)$, $Y=V(G_2)$, $Z=V(G_1)\cap V(G_2)$ and $A=\A_{G_1}(x),B=\A_{G_2}(x)$ with their common intersection $C=\A_{K_k}(x)$. The latter matrix is fixed by the constraints that $G_1[S_1]$ and $G_2[S_2]$ determine cliques. Consider the matrix
$D=\A_{G_1}(x) \curlyvee \A_{G_2}(x)$. By construction $D^{-1}=\widetilde{\B_{G_1}(x)}+\widetilde{\B_{G_2}(x)}-\widetilde{\B_{K_k}(x)}\in \mathcal{B}(G,x)$. By the first claim of Lemma~\ref{linalg} we also know that $D\in \mathcal{A}(G,x)$. By the primal and dual optimization programs we know that there is only one matrix $F$ such that $F\in \mathcal{A}(G,x)$ and $F^{-1}\in \mathcal{B}(G,x)$, that is, $F=\A_G(x)$. Hence $D=\A_G(x)$.
Having $\A_G(x)=\A_{G_1}(x)\curlyvee \A_{G_2}(x)$, the rest of the claims follow from Lemma~\ref{linalg} and the construction.
\end{proof}

\begin{Rem}
We can use the above theorem to compute $\tau(G,x)$ for chordal graphs as they can be built  up using clique sums. Another application of the above theorem is to show a graph $G$ that contains an edge $(u,v)$ such that $y_G(u,v)<0$. Let us glue together $k$ triangles at a common edge, i. e., this is the complete multipartite graph $K_{1,1,k}$. It is often called a triangular book graph. Then for the common edge $(u,v)$ we have
\begin{align*}
y_G(u,v)&=ky_{K_3}(u,v)-(k-1)y_{K_2}(u,v)\\
&=k\frac{x}{(1-x)(1+2x)}-(k-1)\frac{x}{1-x^2}\\
&=\frac{x}{1-x}\cdot \frac{1-(k-2)x}{(1+x)(1+2x)}.
\end{align*}
This is negative if $x>\frac{1}{k-2}$.

\end{Rem}

\subsection{The recoupling algorithm}\label{alg}

We finish this section with an algorithm that we used in computer experiments to approximate the matrix $\A_G(x)$ for various graph $G$ and $x\in (0,1)$. The algorithm can also be used to compute the coefficients in the power series expansion of $\tau(G,x)$.

Let $G=(V,E)$ be a fixed graph and $x\in [0,1)$. Let $M_0(G,x)$ denote the $V\times V$ matrix with $1's$ in the diagonal and $x$ elsewhere. Our algorithm produces a sequence of matrices $M_i(G,x)$ recursively with increasing determinants such that they converge to $\A_G(x)$. 

Then we recursively produce a matrix $M_{i+1}(G,x)$ from $M_i(G,x)$ by choosing $(v,w)\notin E(G)$. Let $A:= V\setminus{v}$, $B:=V\setminus{w}$ and $C:=V\setminus \{v,w\}$. Then we set $$M_{i+1}(G,x):=M_i(G,x)_{A\times A}\curlyvee M_i(G,x)_{B\times B}.$$

The algorithm depends on a choice of non-edges $(v_1,w_1),(v_2,w_2),\dots$. Our choice is to repeat a fix ordering of all non-edges several times. One can also perform the algorithm with formal matrices in which the entries are rational functions of $x$. It is easy to see by induction that in each step the entries remain of the form $f(x)/(1+xg(x))$ for some polynomials $f,g\in\mathbb{Z}(x)$. This implies that the powers series expansions of the entries have integer coefficients. These coefficients stabilize during the algorithm and this provides a method to compute the power series of $\tau(G,x)$ around $0$. 
%The formulas for the coefficients will be given in a subsequent paper.

\begin{Ex} For the graph M\"obius ladder on $10$ vertices, a cycle on $10$ vertices together with the diagonal edges, the matrix $\A_G(x)$ at $x=0.3$ looks as follows.

\begin{minipage}{0.25\textwidth}
\begingroup%
\makeatletter%
\include{Mobius_ladder}%
\makeatother%
\endgroup%
\end{minipage}
\begin{minipage}{0.72\textwidth}
$$
\scalemath{0.64}{\left(\begin{array}{cccccccccc}
1 & 0.3 & 0.1128 & 0.0900 & 0.1598 & 0.3 & 0.1598 & 0.0900 & 0.1128 & 0.3 \\
0.3 & 1 & 0.3 & 0.1128 & 0.0900 & 0.1598 & 0.3 & 0.1598 & 0.0900 & 0.1128 \\
0.1128 & 0.3 & 1 & 0.3 & 0.1128 & 0.0900 & 0.1598 & 0.3 & 0.1598 & 0.0900 \\
0.0900 & 0.1128 & 0.3 & 1 & 0.3 & 0.1128 & 0.0900 & 0.1598 & 0.3 & 0.1598 \\
0.1598 & 0.0900& 0.1128 & 0.3 & 1 & 0.3 & 0.1128 & 0.0900 & 0.1598 & 0.3 \\
0.3 & 0.1598 & 0.0900 & 0.1128 & 0.3 & 1 & 0.3 & 0.1128 & 0.0900 & 0.1598 \\
0.1598 & 0.3 & 0.1598 & 0.0900 & 0.1128 & 0.3 & 1 & 0.3 & 0.1128 & 0.0900 \\
0.0900 & 0.1598 & 0.3 & 0.1598 & 0.0900 & 0.1128 & 0.3 & 1 & 0.3 & 0.1128 \\
0.1128 & 0.0900 & 0.1598 & 0.3 & 0.1598 & 0.0900 & 0.1128 & 0.3 & 1 & 0.3 \\
0.3 & 0.1128 & 0.0900 & 0.1598 & 0.3 & 0.1598 & 0.0900 & 0.1128 & 0.3 & 1
\end{array}\right)}
$$
\end{minipage}
\bigskip

\begin{center}
M\"obius ladder and $\A_G(x)$ at $x=0.3$.
\end{center}

\end{Ex}

\section{Sidorenko-type inequality} \label{operations}

In this section we prove Theorem~\ref{tau-main}, that is, we prove the inequality
$$\tau(G,x)\geq (1-x^2)^{e(G)}$$
for $x\in (0,1)$.
The proof relies on the understanding of the effect of two graph operations to the function $\tau(G,x)$, namely the graph operations edge deletion and edge contraction.

\begin{Th}\label{edge-deletion} For every edge $e=(u,v)\in E(G)$ we have
$$(1-(1-x^2)^2y_G(u,v)^2)\tau(G-e,x)\leq \tau(G,x).$$
In particular, if for some edge $e=(u,v)$ and some $x\in [0,1)$ we have $|y_G(u,v)|\leq \frac{x}{1-x^2}$, then
$$(1-x^2)\tau(G-e,x)\leq \tau(G,x).$$
\end{Th}

\begin{Rem} It is worth classifying the edges of a graph $G$ for a fixed $x\in (0,1)$ as follows. We distinguish three types:
\begin{itemize}
\item $e=(u,v)$ is of type I if $0\leq y_G(u,v)\leq \frac{x}{1-x^2}$,
\item $e=(u,v)$ is of type II if $y_G(u,v)<0$,
\item $e=(u,v)$ is of type III if $y_G(u,v)>\frac{x}{1-x^2}$.
\end{itemize}

The main consequence of Theorem~\ref{edge-deletion} is that the existence of one single type I edge for all pairs $(G,x)$ ensures an inductive proof of
$$\tau(G,x)\geq (1-x^2)^{e(G)}.$$
\end{Rem}

\begin{proof} Let us partition $\A_G(x)=\left(\begin{array}{cc} A_{11} & A_{12} \\ A_{21} & A_{22} \end{array}\right)$ such that $A_{11}$ corresponds to the $2\times 2$ matrix of $u$ and $v$ where $e=(u,v)$. Let $\B_G(x)=\left(\begin{array}{cc} B_{11} & B_{12} \\ B_{21} & B_{22} \end{array}\right)$ be the corresponding decomposition of the inverse matrix. Then by Theorem~\ref{Schur-complement} we have $A_{11}=(B_{11}-B_{12}B_{22}^{-1}B_{21})^{-1}$, that is, $A_{11}^{-1}=B_{11}-B_{12}B_{22}^{-1}B_{21}$.
Since $A_{11}=\left(\begin{array}{cc} 1 & x \\ x & 1\end{array}\right)$ we have 
$A_{11}^{-1}=\left(\begin{array}{cc} \frac{1}{1-x^2} & -\frac{x}{1-x^2} \\ -\frac{x}{1-x^2} & \frac{1}{1-x^2}\end{array}\right)$.

For an edge $(u,v)$ let $E_{u,v}(x)$ be the matrix which takes value $x$ at the entries $(u,u)$ and $(v,v)$ and $-1$ at the entries $(u,v)$ and $(v,u)$, and $0$ everywhere else. Let us consider the matrix
$$B(t)=\B_G(x)-tE_e(x).$$

First we show that $B(t)$ is positive definite for $t\in (-\frac{1}{1-x^2},\frac{1}{1-x^2})$. Indeed, by the last part of Theorem~\ref{Schur-complement} it is positive definite, if $B_{22}$ and its Schur complement 
$B_{11}+tE_e(x)-B_{12}B_{22}^{-1}B_{21}$ are both positive definite. The matrix $B_{22}$ is clearly positive definite since it is a submatrix of $\B_{G}(x)$. For the Schur complement we have
$$B_{11}-tE_e(x)-B_{12}B_{22}^{-1}B_{21}=A_{11}^{-1}-tE_e(x)=\left(\begin{array}{cc} \frac{1}{1-x^2}-tx & -\frac{x}{1-x^2}+t \\ -\frac{x}{1-x^2}+t & \frac{1}{1-x^2}-tx\end{array}\right).$$
This is positive definite if the determinants of the principal submatrices are positive, that is, $t<\frac{1}{x(1-x^2)}$ and the determinant itself is positive. The determinant is
\begin{align*}
\det(B_{11}-tE_e(x)-B_{12}B_{22}^{-1}B_{21})&=\left(\left(\frac{1}{1-x^2}-xt\right)^2-\left(-\frac{x}{1-x^2}+t\right)^2\right)\\
&=\frac{1}{1-x^2}\left(1-(1-x^2)^2t^2\right)
\end{align*}
This is also positive if $|t|<\frac{1}{1-x^2}$. Hence $B(t)$ is positive definite for $t\in (-\frac{1}{1-x^2},\frac{1}{1-x^2})$. In particular, $B(y_G(u,v))\in \mathcal{B}(G-e,x)$ if $|y_{G}(u,v)|<\frac{1}{1-x^2}$. If $|y_{G}(u,v)|\geq \frac{1}{1-x^2}$, then the statement of the theorem is trivial anyway.
Furthermore,
$$\det(B(t))=\det(B_{22}) \det(B_{11}-tE_e(x)-B_{12}B_{22}^{-1}B_{21}).$$
In case of $t=0$ we have $B(0)=\B_G(x)$ and so $\det(B_{22})=\frac{1-x^2}{\tau(G,x)}$.
Hence
$$\det(B(t))=\left(1-(1-x^2)^2t^2\right)\frac{1}{\tau(G,x)}.$$
In particular,
$$\frac{1}{\tau(G-e,x)}\geq \det(B(y_G(u,v)))=\left(1-(1-x^2)^2y_G(u,v)^2\right)\frac{1}{\tau(G,x)}.$$
In other words,
$$(1-(1-x^2)^2y_G(u,v)^2)\tau(G-e,x)\leq \tau(G,x).$$
When $|y_G(u,v)|\leq \frac{x}{1-x^2}$ we get that
$$(1-x^2)\tau(G-e,x)\leq \tau(G,x).$$
\end{proof}

Let $G/e$ be the graph obtained from $G$ by contracting the edge $e$ and deleting the possibly appearing multiple edges. Then $|E(G/e)|\leq e(G)-1$.

\begin{Th} \label{edge-contraction} Suppose that for some graph $G$, an edge $e=(u,v)$ and some $x\in [0,1)$ we have $y_G(u,v)\geq \frac{x}{1-x^2}$. Then
$$\tau(G,x)\geq (1-x^2)\tau(G/e,x).$$
\end{Th}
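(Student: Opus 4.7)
The plan is to mirror the strategy of Theorem~\ref{edge-deletion}: construct an explicit matrix $B'\in\mathcal{B}(G/e,x)$ out of $\B_G(x)$, compute its determinant, and invoke Lemma~\ref{dual} to conclude $\det B'\le\det\B_{G/e}(x)=1/\tau(G/e,x)$. Since $G/e$ has one fewer vertex than $G$, the natural operation on the dual side is to \emph{contract} rows and columns $u$ and $v$ of $\B_G(x)$ into a single row and column indexed by the new vertex $w$.

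Formally, let $Q$ be the $(n-1)\times n$ matrix sending the basis vectors $e_u$ and $e_v$ to $e_w$ and fixing every other basis vector, and put $\tilde B:=Q\,\B_G(x)\,Q^T$. This matrix is positive definite (since $Q^T$ is injective), and a direct computation gives $\tilde B_{w,w}=\B_G(x)_{uu}+2\B_G(x)_{uv}+\B_G(x)_{vv}$, $\tilde B_{w,k}=\B_G(x)_{uk}+\B_G(x)_{vk}$, and $\tilde B_{k,k'}=\B_G(x)_{kk'}$ for $k,k'\in V(G)\setminus\{u,v\}$. Checking the support (zeros off the edges of $G/e$) and the diagonal parametrization at each $k\neq w$ shows that $\tilde B$ already satisfies every requirement for membership in $\mathcal{B}(G/e,x)$ except that $\tilde B_{w,w}$ exceeds the value mandated by Lemma~\ref{dual} by exactly $1-2(1-x)y_G(u,v)$. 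I therefore define
\[
B':=\tilde B+c\,E_{w,w},\qquad c:=2(1-x)\,y_G(u,v)-1,
\]
so that $B'_{w,w}$ matches the required parametrization. Membership in $\mathcal{B}(G/e,x)$ then reduces to verifying $B'$ is positive definite, which I plan to do via Sylvester's criterion (Theorem~\ref{Sylvester}) after ordering $w$ last: the leading principal minors of size at most $n-2$ agree with those of the positive-definite matrix $\B_G(x)^{c(u,v)}$, so only the positivity of $\det B'$ needs to be checked.

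To evaluate $\det B'$, the matrix determinant lemma gives $\det B'=\det\tilde B+c\det\tilde B^{c(w)}$, and Lemma~\ref{B-minors} supplies $\det\tilde B^{c(w)}=\det\B_G(x)^{c(u,v)}=(1-x^2)\det\B_G(x)$. For $\det\tilde B$ I extend $Q$ to an invertible $n\times n$ matrix $T$ by adjoining the extra row $e_u-e_v$, so that $\det T=\pm 2$ and $\tilde B$ appears as the upper-left $(n-1)\times(n-1)$ block of $T\B_G(x)T^T$. The inverse of $T\B_G(x)T^T$ is $T^{-T}\A_G(x)T^{-1}$, whose $(n,n)$-entry evaluates to $\tfrac14(\A_G(x)_{uu}-2\A_G(x)_{uv}+\A_G(x)_{vv})=\tfrac{1-x}{2}$; the Schur-complement identity then gives
\[
\det\tilde B=\tfrac{1-x}{2}\cdot\det(T\B_G(x)T^T)=2(1-x)\det\B_G(x).
\]
Substituting into the expression for $\det B'$ and factoring produces
\[
\det B'=(1-x)^2\bigl(1+2(1+x)\,y_G(u,v)\bigr)\det\B_G(x),
\]
which is strictly positive under the hypothesis $y_G(u,v)\ge x/(1-x^2)\ge 0$, confirming positive-definiteness of $B'$.

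Finally, Lemma~\ref{dual} yields $\det B'\le\det\B_{G/e}(x)=1/\tau(G/e,x)$, equivalently $\tau(G,x)\ge(1-x)^2\bigl(1+2(1+x)\,y_G(u,v)\bigr)\tau(G/e,x)$. A short algebraic check shows that the hypothesis $y_G(u,v)\ge x/(1-x^2)$ simplifies $(1-x)^2(1+2(1+x)y_G(u,v))\ge(1-x)(1+x)=1-x^2$ (with equality at the threshold), finishing the proof. The main obstacle is the positive-definiteness of $B'$: the threshold $y_G(u,v)\ge 1/(2(1-x))$ that would make $c\ge 0$ is strictly larger than our hypothesis $x/(1-x^2)$, so one cannot just add a positive semidefinite correction to $\tilde B$. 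Circumventing this subtlety via Sylvester's criterion together with the explicit positive formula for $\det B'$ is the delicate step of the argument.
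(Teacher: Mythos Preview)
Your proposal is correct and follows essentially the same route as the paper: both construct the identical candidate matrix in $\mathcal{B}(G/e,x)$ by summing the $u$- and $v$-rows/columns of $\B_G(x)$ and adjusting the diagonal entry at the merged vertex, verify positive-definiteness via Sylvester's criterion, and then invoke Lemma~\ref{dual}. The only cosmetic difference is in the determinant bookkeeping: the paper expands $\det B$ as $\det(B_{22})$ times a Schur complement and evaluates the scalar via the three identities coming from $A_{11}^{-1}=B_{11}-B_{12}B_{22}^{-1}B_{21}$, whereas you reach the same formula $(1-x)^2\bigl(1+2(1+x)y_G(u,v)\bigr)\det\B_G(x)$ through the matrix determinant lemma together with the auxiliary change-of-basis $T$ and Lemma~\ref{B-minors}.
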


\begin{Rem} 
Theorem~\ref{edge-contraction} implies that a type III edge also ensures the inductive proof of 
$$\tau(G,x)\geq (1-x^2)^{e(G)}.$$
We will see that there is always a type I or type III edge.
\end{Rem}

\begin{proof}
As before let us partition $\A_G(x)=\left(\begin{array}{cc} A_{11} & A_{12} \\ A_{21} & A_{22} \end{array}\right)$ such that $A_{11}$ corresponds to the $2\times 2$ matrix of $u$ and $v$ where $e=(u,v)$. Let $\B_G(x)=\left(\begin{array}{cc} B_{11} & B_{12} \\ B_{21} & B_{22} \end{array}\right)$ be the corresponding decomposition of the inverse matrix. Then $A_{11}=(B_{11}-B_{12}B_{22}^{-1}B_{21})^{-1}$, that is, $A_{11}^{-1}=B_{11}-B_{12}B_{22}^{-1}B_{21}$.
Since $A_{11}=\left(\begin{array}{cc} 1 & x \\ x & 1\end{array}\right)$ we have 
$A_{11}^{-1}=\left(\begin{array}{cc} \frac{1}{1-x^2} & -\frac{x}{1-x^2} \\ -\frac{x}{1-x^2} & \frac{1}{1-x^2}\end{array}\right)$.
Let $-\underline{y_u}$ and $-\underline{y_v}$ be the two column vectors of $B_{21}$, so these vectors contain the entries $(y_G(u,w))_w$ and  $(y_G(v,w))_w$ for $w\in V(G)\setminus \{u,v\}$. Then we obtain the following equations by comparing $A_{11}^{-1}=B_{11}-B_{12}B_{22}^{-1}B_{21}$:
$$1+xy_G(u,v)+x\sum_{w}y_G(u,w)-\underline{y_u}^TB_{22}^{-1}\underline{y_u}=\frac{1}{1-x^2},$$
$$1+xy_G(v,u)+x\sum_{w}y_G(v,w)-\underline{y_v}^TB_{22}^{-1}\underline{y_v}=\frac{1}{1-x^2},$$
and
$$-y_G(u,v)-\underline{y_u}^TB_{22}^{-1}\underline{y_v}=-\frac{x}{1-x^2}.$$
\medskip

Now let us consider the following matrix $B$ corresponding to $G/e$. Let $s$ be the new vertex that we get by contracting $u$ and $v$. If $w_1,w_2\neq s$, then set $B_{w_1,w_2}=\B_G(x)_{w_1,w_2}$. For $w\neq s$ let
$$t(s,w)=-B_{s,w}=-\B_G(x)_{w,u}-\B_G(x)_{w,v}=y_G(w,u)+y_G(w,v),$$
and
$$B_{s,s}=1+x\sum_{w}y_G(u,w)+x\sum_{w}y_G(v,w).$$
We will show that the matrix $B$ is positive definite. This will imply that $B\in \mathcal{B}(G/e,x)$. Since the matrix $B_{22}$  is  a principal submatrix of $\B_G(x)$, it is positive definite. Thus  we only need to show that $\det(B)>0$ by Sylvester's criterion (Theorem~\ref{Sylvester}). Note that
$$\det(B)=\det(B_{22})\left(1+x\sum_{w}y_G(u,w)+x\sum_{w}y_G(v,w)-(\underline{y_u}+\underline{y_v})^TB_{22}^{-1}(\underline{y_u}+\underline{y_v})\right).$$
Here $\det(B_{22})=\frac{1-x^2}{\tau(G,x)}$.
Furthermore, using the above equations we get that
$$1+x\sum_{w}y_G(u,w)+x\sum_{w}y_G(v,w)-(\underline{y_u}+\underline{y_v})^TB_{22}^{-1}(\underline{y_u}+\underline{y_v})$$
$$1+\left(\frac{1}{1-x^2}-1-xy_G(u,w)\right)+\left(\frac{1}{1-x^2}-1-xy_G(v,w)\right)+2\left(y_G(u,v)-\frac{x}{1-x^2}\right)=$$
$$=\frac{1-2x+x^2}{1-x^2}+2(1-x)y_G(u,v)=\frac{1-x}{1+x}+2(1-x)y_G(u,v)$$
Hence
$$\det(B)=\left(\frac{1-x}{1+x}+2(1-x)y_G(u,v)\right)\frac{1-x^2}{\tau(G,x)}.$$
This is clearly positive if $y_G(u,v)\geq 0$ so $B\in \mathcal{B}(G/e,x)$.
Furthermore, if $y_G(u,v)\geq \frac{x}{1-x^2}$, then
$$\frac{1-x}{1+x}+2(1-x)y_G(u,v)\geq 1$$
so
$$\frac{1}{\tau(G/e,x)}=\max_{B'\in \mathcal{B}(G/e,x)}\det(B')\geq \det B\geq \frac{1-x^2}{\tau(G,x)}.$$
Hence
$$\tau(G,x)\geq (1-x^2)\tau(G/e,x).$$
\end{proof}

\begin{Rem} We honestly confess that we have never seen a graph and a type III edge $e=(u,v)$, that is, an edge for which $y_G(u,v)>\frac{x}{1-x^2}$, and if $y_G(u,v)=\frac{x}{1-x^2}$, then $e$ was a cut edge. It can be shown that for a cut edge $e$ we always have $y_G(u,v)=\frac{x}{1-x^2}$.
\end{Rem}

\begin{proof}[Proof of Theorem~\ref{tau-main}.] We prove the statement by induction on the number of edges. If the graph has no edges, then the claim is trivial. First we show that there is always an edge $e=(u,v)$ for which $y_G(u,v)\geq 0$. In fact, for any vertex $u$ we have
$$\sum_{v\in N_G(u)}y_G(u,v)\geq 0.$$
An easy way to see this is the following: as before let us partition $\A_G(x)=\left(\begin{array}{cc} A_{11} & A_{12} \\ A_{21} & A_{22} \end{array}\right)$ such that $A_{11}$ corresponds to the  vertex $u$. Let $\B_G(x)=\left(\begin{array}{cc} B_{11} & B_{12} \\ B_{21} & B_{22} \end{array}\right)$ be the corresponding decomposition of the inverse matrix. Then $A_{11}=(B_{11}-B_{12}B_{22}^{-1}B_{21})^{-1}$, that is, $A_{11}^{-1}=B_{11}-B_{12}B_{22}^{-1}B_{21}$. Note that $A_{11}^{-1}=1$, and $B_{22}^{-1}$ is positive definite, and $B_{21}$ is just the transpose of the vector $B_{12}$. Thus $1=A_{11}^{-1}=B_{11}-B_{12}B_{22}^{-1}B_{21}\leq B_{11}$. Since $B_{11}=1+x\sum_{v\in N_G(u)}y_G(u,v)$ we immediately get that $\sum_{v\in N_G(u)}y_G(u,v)\geq 0$.

Now suppose that for some edge $e=(u,v)$ we have $y_G(u,v)\geq 0$. If $y_G(u,v)\leq \frac{x}{1-x^2}$, then by Theorem~\ref{edge-deletion} we have $\tau(G,x)\geq (1-x^2)\tau(G-e,x)$.
By induction
$$\tau(G,x)\geq (1-x^2)\tau(G-e,x)\geq (1-x^2)(1-x^2)^{|E(G-e)|}=(1-x^2)^{e(G)}.$$
If $y_G(u,v)\geq \frac{x}{1-x^2}$, then by Theorem~\ref{edge-contraction} we have
$\tau(G,x)\geq (1-x^2)\tau(G/e,x)$. By induction
$$\tau(G,x)\geq (1-x^2)\tau(G/e,x)\geq (1-x^2)(1-x^2)^{|E(G/e)|}\geq (1-x^2)^{e(G)}.$$
We are done.
\end{proof}

\begin{proof}[Proof of Theorem~\ref{main2}.]
This theorem immediately follows from Theorem~\ref{tau-main} and Lemma~\ref{bipartite-symmetry}.
\end{proof}

We end this section with a counterpart of Theorem~\ref{edge-deletion}. We do not prove this statement as its proof strongly follows the proofs of Theorems~\ref{edge-deletion} and \ref{edge-contraction}.

\begin{Th}
Let $G$ be a graph and let $e=(u,v)\in E(G)$. Let $z=z_{G-e}(u,v)$. Then
$$\tau(G-e,x)\geq \frac{(1-zx)^2}{(1-z^2)(1-x^2)}\tau(G,x).$$
In particular, if $z\geq \frac{2x}{1-x^2}$ or $z\leq 0$, then
$$(1-x^2)\tau(G-e,x)\geq \tau(G,x).$$
\end{Th}

\section{Logarithmic derivative} \label{second-interval}

In this section we study the logarithmic derivative of the function $\tau(G,x)$. We prove Theorems~\ref{log-ineq} and \ref{tightness} for the so-called second interval, that is, the interval $[\frac{1}{\overline{d}-1},1]$. The identity of Lemma~\ref{log-derivative} is the main statement in this section. 

\begin{Lemma} \label{log-derivative} We have
$$-\frac{\tau'(G,x)}{\tau(G,x)}=2\sum_{(u,v)\in E(G)}y_G(u,v).$$
\end{Lemma}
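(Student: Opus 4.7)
The plan is to apply Jacobi's formula for the derivative of a log-determinant, combined with the sparsity of $\B_G(x)$ outside the edges of $G$ (Theorem~\ref{unique-maximizer}), which yields an envelope-theorem style cancellation of the ``free'' entries.

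First I would verify that $x\mapsto \A_G(x)$ is differentiable. Strict log-concavity of $\det$ on $\mathcal{A}(G,x)$ means that the Hessian of $-\log\det$ in the free (non-edge, off-diagonal, symmetric) directions is strictly positive definite at the unique maximizer, so the implicit function theorem applied to the stationarity equations gives smoothness in $x$.

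Then Jacobi's formula applied to $\tau(G,x)=\det(\A_G(x))$ yields
$$\frac{\tau'(G,x)}{\tau(G,x)}=\mathrm{Tr}\bigl(\B_G(x)\,\A_G'(x)\bigr)=\sum_{u,v\in V(G)}\B_G(x)_{u,v}\,\A_G'(x)_{v,u}.$$
I would then split the right-hand sum by the type of entry $(u,v)$. Diagonal entries satisfy $\A_G(x)_{u,u}=1$, so $\A_G'(x)_{u,u}=0$ and they contribute nothing. Non-edge off-diagonal entries satisfy $\B_G(x)_{u,v}=0$ by Theorem~\ref{unique-maximizer}, so they contribute nothing either --- this is where the envelope-type cancellation occurs. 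For each ordered edge $(u,v)$, the constraint $\A_G(x)_{u,v}=x$ forces $\A_G'(x)_{u,v}=1$, while $\B_G(x)_{u,v}=-y_G(u,v)$ by definition. Summing, and noting that each unordered edge enters the trace twice (once as $(u,v)$ and once as $(v,u)$) and that $y_G$ is symmetric in its arguments, one obtains
$$\frac{\tau'(G,x)}{\tau(G,x)}=-2\sum_{(u,v)\in E(G)}y_G(u,v),$$
which rearranges to the desired identity.

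The hardest part, I expect, is conceptual rather than computational: one must notice that the non-edge off-diagonal entries of $\A_G(x)$ really do depend on $x$ (in a possibly complicated implicit way), but that these hidden dependencies drop out of the derivative precisely because $\B_G(x)$ vanishes at those positions --- which is the optimality content already encoded in Theorem~\ref{unique-maximizer}. Once this observation is in hand, the proof reduces to a one-line trace computation as above.
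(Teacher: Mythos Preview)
Your proof is correct and is essentially the same as the paper's: both compute the log-derivative via Jacobi's formula $\frac{(\det A)'}{\det A}=\sum_{i,j}a_{ij}'\,(A^{-1})_{ji}$ and then split into the three cases (diagonal, non-edge, edge), using $\B_G(x)_{u,v}=0$ at non-edges to kill the implicit terms. The only differences are cosmetic --- the paper derives Jacobi's formula from the Leibniz expansion rather than citing it, and does not pause over the differentiability of $x\mapsto\A_G(x)$, which you justify via the implicit function theorem.
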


\begin{proof}
Suppose $A(x)$ is a matrix with entries $a_{ij}(x)$.
Then
$$\frac{d}{dx}\det(A(x))=\frac{d}{dx}\left(\sum_{\pi \in S_n}(-1)^{\mathrm{sign}(\pi)}\prod_{i=1}^na_{i,\pi(i)}(x)\right)=\sum_{\pi \in S_n}(-1)^{\mathrm{sign}(\pi)}\frac{d}{dx}\left(\prod_{i=1}^na_{i,\pi(i)}(x)\right)=$$
$$=\sum_{i,j}a_{ij}'(x)(-1)^{i+j}\det(A^{ij}(x)),$$
where $A^{ij}(x)$ is the matrix which we obtain from $A(x)$ by deleting the $i$.th row and $j$.th column. Clearly,
$$(-1)^{i+j}\det(A^{ij}(x))=\det(A(x))\cdot A^{-1}_{ji}(x).$$
Hence
$$\frac{\frac{d}{dx}\det(A(x))}{\det(A(x))}=\sum_{i,j}a_{ij}'(x)\cdot A^{-1}_{ji}(x).$$
Let us apply this to $\A_G(x)$: if the vertices $i$ and $j$ are adjacent, then $\A_G(x)_{ij}=x$ so its derivative is $1$, and $\A_G(x)^{-1}_{ij}=\B_G(x)_{ij}=-y_G(i,j)$. If the vertices $i$ and $j$ are distinct non-adjacent vertices, then $\A^{-1}_G(x)_{ij}=\B_G(x)_{ij}=0$. Finally if $i=j$, then the derivative of $\A_G(x)_{ii}=1$ is $0$. Hence
$$-\frac{\tau'(G,x)}{\tau(G,x)}=2\sum_{(u,v)\in E(G)}y_G(u,v).$$
\end{proof}

\begin{Lemma} \label{log-derivative_upper_bound} We have
$$2\sum_{e\in E(G)}y_G(u,v)\leq \frac{n-1}{1-x}.$$
\end{Lemma}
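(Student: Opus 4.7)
The plan is to reduce the inequality to a lower bound on the quadratic form $\1^T\B_G(x)\1$, and then to obtain that lower bound by combining Theorem~\ref{quadratic-pos-def} with the trivial entrywise estimate $|\A_G(x)_{ij}|\leq 1$.

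First I would evaluate $\1^T\B_G(x)\1$ explicitly using the parametrization of $\B_G(x)$ recalled in Section~\ref{basic-properties}. The diagonal contribution is
$$\sum_{u\in V(G)}\Bigl(1+x\sum_{v\in N(u)}y_G(u,v)\Bigr) = n+2x\sum_{(u,v)\in E(G)}y_G(u,v),$$
since each edge is counted twice, and the off-diagonal contribution is $-2\sum_{(u,v)\in E(G)}y_G(u,v)$. Hence
$$\1^T\B_G(x)\1 \;=\; n - 2(1-x)\sum_{(u,v)\in E(G)}y_G(u,v).$$
Because $1-x>0$, the target inequality $2\sum_{(u,v)\in E(G)}y_G(u,v)\leq (n-1)/(1-x)$ is therefore equivalent to $\1^T\B_G(x)\1\geq 1$.

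Next I would apply Theorem~\ref{quadratic-pos-def} to $A=\A_G(x)$ with $\underline{x}=\1$, obtaining
$$\bigl(\1^T\A_G(x)\1\bigr)\bigl(\1^T\B_G(x)\1\bigr) \;\geq\; \|\1\|^4 \;=\; n^2.$$
It then suffices to show $\1^T\A_G(x)\1\leq n^2$, and this is immediate: since $\A_G(x)$ is positive definite with unit diagonal, every $2\times 2$ principal submatrix has nonnegative determinant, forcing $|\A_G(x)_{ij}|\leq 1$ for all $i,j$, so summing the $n^2$ entries gives the bound. Combining the two inequalities yields $\1^T\B_G(x)\1\geq 1$, which is what is needed.

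I do not anticipate any real obstacle: both ingredients (Theorem~\ref{quadratic-pos-def} and the entrywise bound on a correlation matrix) are essentially free, so the only work is the bookkeeping that turns the bound on $y_G$ into a quadratic form in $\B_G(x)$. As a sanity check, on $K_n$ the vector $\1$ is a joint eigenvector of $\A_G(x)$ and $\B_G(x)$, so Theorem~\ref{quadratic-pos-def} is tight and the inequality reduces to $n/(1+(n-1)x)\geq 1$, i.e. $(n-1)(1-x)\geq 0$; this is consistent and indicates that the bound has substantial slack on most graphs.
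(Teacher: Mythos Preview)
Your argument is correct and is essentially the same as the paper's: both compute $\1^T\B_G(x)\1=n-2(1-x)\sum_{(u,v)\in E}y_G(u,v)$, invoke Theorem~\ref{quadratic-pos-def} with $A=\A_G(x)$ and $\underline{x}=\1$, and bound $\1^T\A_G(x)\1\le n^2$ via $|\A_G(x)_{ij}|\le 1$ to conclude $\1^T\B_G(x)\1\ge 1$. The only difference is that the paper first records the weaker bound $\1^T\B_G(x)\1\ge 0$ (giving $n/(1-x)$) before sharpening, whereas you go straight to the refined estimate.
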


\begin{proof}
First we prove the slightly weaker result
$$2\sum_{e\in E(G)}y_G(u,v)\leq \frac{n}{1-x}.$$
The crucial observation is that since $\B_G(x)$ is positive definite we have $\underline{1}^T\B_G(x)\underline{1}> 0$, where $\underline{1}$ is the all-$1$ vector. Clearly,
$$0<\underline{1}^T\B_G(x)\underline{1}=n+2(x-1)\sum_{(u,v)\in E(G)}y_G(u,v).$$
Hence
$$2\sum_{(u,v)\in E(G)}y_G(u,v)< \frac{n}{1-x}.$$
To improve on this result we observe that $\underline{1}^T\B_G(x)\underline{1}$ cannot be arbitrarily small. Indeed, by Theorem~\ref{quadratic-pos-def} we know that for any positive definite matrix $A$ and any vector $\underline{x}$ we have
$$\underline{x}^TA^{-1}\underline{x}\cdot \underline{x}^TA\underline{x}\geq ||\underline{x}||^4.$$
Applying this result to $\B_G(x)$ and $\underline{1}$ we get that
$$\underline{1}^T\B_G(x)\underline{1} \cdot \underline{1}^T\A_G(x)\underline{1}\geq ||\underline{1}||^4=n^2.$$
Note that $\underline{1}^T\A_G(x)\underline{1}\leq n^2$ as any element of $\A_G(x)$ is at most $1$. Consequently, $\underline{1}^T\B_G(x)\underline{1}\geq 1$, and
$$1\leq \underline{1}^T\B_G(x)\underline{1}=n+2(x-1)\sum_{(u,v)\in E(G)}y_G(u,v).$$
Hence
$$2\sum_{(u,v)\in E(G)}y_G(u,v)\leq \frac{n-1}{1-x}.$$
\end{proof}

\begin{Rem} By applying the inequality 
$$\underline{x}^TA^{-1}\underline{x}\cdot \underline{x}^TA\underline{x}\geq ||\underline{x}||^4.$$
to the matrix $\B_G(x)$ and the characteristic vector $e_S$ that takes $1$ at the vertices of $S$, and $0$ everywhere else we get that
$$(|S|-1)+x\sum_{e\in E(S,V\setminus S)}y_e\geq 2(1-x)\sum_{e\in E(S)}y_e.$$
If $S=V(G)$ we get the above lemma. If $S=\{u\}$, then we get that $\sum_{v\in N_G(u)}y_G(u,v)\geq 0$, an inequality that we used in the proof of Theorem~\ref{tau-main}.
\end{Rem}

\begin{Rem}
In the next few applications the inequality 
$$\frac{\tau'(G,x)}{\tau(G,x)}=-2\sum_{(u,v)\in E(G)}y_G(u,v)\geq -\frac{n}{1-x}$$
will be sufficient for us.
\end{Rem}

Now we are ready to give the proofs of Theorems~\ref{log-ineq} and \ref{tightness} for the interval $[\frac{1}{\overline{d}-1},1]$.

\begin{proof}[Proof of Theorem~\ref{log-ineq} for the second interval.]
From the previous two lemmas we know that
$$-\frac{\tau'(G,x)}{\tau(G,x)}=2\sum_{(u,v)\in E(G)}y_G(u,v)\leq \frac{n}{1-x}.$$
It is easy to check that
$$\frac{n}{1-x} \leq 2e(G)\frac{x}{1-x^2}$$
if $x\geq \frac{1}{\overline{d}-1}$ as required.
\end{proof}

\begin{proof}[Proof of Theorem~\ref{tightness} for the second interval.]
Set $u=\frac{1}{\overline{d}-1}$
We have
\begin{align*}
\ln \tau(G,x)&=\ln \tau(G,u)+\int_{u}^{x} \frac{\tau'(G,t)}{\tau(G,t)}dt \\
             &\geq e(G)\ln (1-u^2)+\int_{u}^{x} -\frac{n}{1-t}dt \\
             &=e(G)\ln (1-u^2)+e(G)\int_{u}^x\frac{-2t}{1-t^2}dt+\int_{u}^{x}\left(\frac{-n}{1-t}+\frac{2e(G)t}{1-t^2}\right)dt\\
						&=e(G)\ln(1-x^2)+e(G)\int_{u}^{x}\left(-\frac{2}{\overline{d}(1-t)}+\frac{2t}{1-t^2}\right)dt\\
						&=e(G)\ln(1-x^2)+e(G)\int_{u}^{x}\frac{2(t(\overline{d}-1)-1)}{\overline{d}(1-t^2)}dt
\end{align*}
Hence
$$\frac{\ln \tau(G,x)}{e(G)}-\ln(1-x^2)\geq \int_{u}^{x}\frac{2(t(\overline{d}-1)-1)}{\overline{d}(1-t^2)}dt.$$
Set
$$\alpha(\overline{d},x)=\int_{u}^{x}\frac{2(t(\overline{d}-1)-1)}{\overline{d}(1-t^2)}dt.$$
This function is clearly positive as the integrand is positive.
\end{proof}

\section{Totally positive of order $2$} \label{M-graphs}

In this section we prove Theorem~\ref{log-ineq}  for the so-called first interval, that is, the interval $[0,\frac{1}{\Delta-1}]$. (The proof of Theorem~\ref{tightness} for the first interval will be given in Section~\ref{Ihara-zeta_section}.) We also prove Theorem~\ref{regular-complete}.

We say that a probability distribution with density function $f:\mathbb{R}^n\to \mathbb{R}$ is totally positive of order $2$ if it satisfies the inequality 
$$f(\underline{u})f(\underline{v})\leq f(\underline{u}\vee \underline{v})f(\underline{u}\wedge \underline{v})$$
for all $\underline{u}=(u_1,\dots ,u_n)$ and $\underline{v}=(v_1,\dots ,v_n)$, where
$\underline{u}\vee \underline{v}$ and $\underline{u}\wedge \underline{v}$ are the vectors with coordinates $(\underline{u}\vee \underline{v})_i=\max(u_i,v_i)$ and $(\underline{u}\wedge \underline{v})_i=\min(u_i,v_i)$.
To a centered multivariate Gaussian distribution with covariance matrix $A$, the density function
$$\frac{1}{(2\pi)^{n/2}\det(A)^{1/2}}\exp\left(-\frac{1}{2}\underline{u}A^{-1}\underline{u}\right)$$
is totally positive of order $2$ if and only if all off-diagonal elements of $A^{-1}$ are non-positive \cite{Bol,KR1}. Such matrices are called $M$-matrices.

\begin{Def}
A matrix is called an $M$-matrix if all off-diagonal entries are non-positive and its eigenvalues have non-negative real parts. 
\end{Def} 

There are many equivalent characterizations of M-matrices, for details see Chapter 2.5 of \cite{HJ2}. It is known for instance that if $B$ is an invertible $M$-matrix, then its inverse has only non-negative entries, see Theorem~\ref{M-matrix-inverse} in the Appendix. In fact, for non-singular matrices with non-positive off-diagonal elements this is an equivalent characterization.  In case of a symmetric matrix $B$, the matrix $B$ is an $M$-matrix if and only if the matrix is positive semidefinite and all off-diagonal entries are non-positive.

Let us consider the matrix $\B_G(x)=\A_G(x)^{-1}$. The off-diagonal elements of $\B_G(x)$ are $-y_G(u,v)$ for the edges $(u,v)\in E(G)$. Hence this is an $M$-matrix if and only if $y_G(u,v)\geq 0$ for all $(u,v)\in E(G)$. To sum up, the homogeneous Gaussian Markov random field for the pair $(G,x)$ is totally positive of order $2$ if and only if $y_G(u,v)\geq 0$ for all edges $(u,v)$. Then it is an interesting question in its own right that for which pairs $(G,x)$ satisfy that  $y_G(u,v)\geq 0$ for all edges $(u,v)$.  
In this direction we prove two theorems.

\begin{Th} \label{B is M-matrix} Let $G$ be a graph with largest degree $\Delta$. If $0\leq x< \frac{1}{\Delta-1}$, then the matrix $\B_G(x)$ is an $M$-matrix.
\end{Th}

\begin{Th} \label{vertex-transitive}
Let $G$ be a vertex-transitive graph and $x\in (0,1)$. Then for every $(u,v)\in E(G)$ we have $y_G(u,v)>0$, and for every $(u,v)\notin E(G)$ we have $z_G(u,v)<x$.
\end{Th}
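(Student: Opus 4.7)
My plan is to exploit the fact that vertex-transitivity forces $\B_G(x)$ to be highly symmetric and then feed this into Lemma~\ref{Schur complement bounds}. By the uniqueness of the maximizer $\A_G(x)$ in $\mathcal{A}(G,x)$, every $\sigma\in\mathrm{Aut}(G)$ fixes $\A_G(x)$, and so also $\B_G(x)$. Consequently the quantity $Y_u=\sum_{v\in N(u)}y_G(u,v)$ is independent of $u$; call its common value $Y$. I may assume $G$ has at least one edge (otherwise the claim is vacuous), hence $n\geq 2$.

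Next I would bound $Y$ from above using Lemma~\ref{log-derivative_upper_bound}. Summing $Y_u$ over $u$ gives
$$nY=\sum_{u\in V(G)}Y_u=2\sum_{(u,v)\in E(G)}y_G(u,v)\leq \frac{n-1}{1-x},$$
so $Y\leq \frac{n-1}{n(1-x)}<\frac{1}{1-x}$. Equivalently, $Y(1-x)<1$, which is the key strict inequality for the rest of the argument.

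Finally I would close the proof by reading off the two desired inequalities from Lemma~\ref{Schur complement bounds}. For an edge $(u,v)$, part (b) together with $Y_u=Y_v=Y$ gives
$$2y_G(u,v)\geq \frac{2}{1-x}-2(1+xY)=\frac{2x\bigl(1-Y(1-x)\bigr)}{1-x}>0,$$
so $y_G(u,v)>0$. For a non-edge $(u,v)$, part (c) gives $\frac{2}{1-z_G(u,v)}\leq 2(1+xY)$, whence
$$z_G(u,v)\leq \frac{xY}{1+xY}<x,$$
the last inequality being equivalent to $Y(1-x)<1$, which we just established. Both claims follow. The argument is essentially routine once one spots that vertex-transitivity flattens the $Y_u$'s into a single constant; the only mild obstacle is using the sharpened bound $\frac{n-1}{1-x}$ rather than $\frac{n}{1-x}$ in Lemma~\ref{log-derivative_upper_bound}, which is what yields the strict inequality $Y(1-x)<1$ needed for strict (rather than non-strict) conclusions.
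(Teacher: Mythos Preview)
Your proof is correct and follows essentially the same route as the paper: establish that $Y_u$ is constant (call it $Y$), show $Y<\frac{1}{1-x}$, and then read off both conclusions from parts (b) and (c) of Lemma~\ref{Schur complement bounds}. The only difference is in the middle step: the paper obtains $Y<\frac{1}{1-x}$ by observing that $\underline{1}$ is an eigenvector of the positive definite matrix $\B_G(x)$ with eigenvalue $1+(x-1)Y>0$, whereas you invoke Lemma~\ref{log-derivative_upper_bound}; the paper's Remark immediately following the proof explicitly points out that your route gives the sharper bound $Y\le\frac{n-1}{n(1-x)}$.
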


Recall that we have  classified the edges of a graph $G$ for a fixed $x\in (0,1)$ as follows. We distinguish three types:
\begin{itemize}
\item $e=(u,v)$ is of type I if $0\leq y_G(u,v)\leq \frac{x}{1-x^2}$,
\item $e=(u,v)$ is of type II if $y_G(u,v)<0$,
\item $e=(u,v)$ is of type III if $y_G(u,v)>\frac{x}{1-x^2}$.
\end{itemize}

Thus $\B_G(x)$ is an $M$-matrix if $y_G(u,v)\geq 0$ for all $(u,v)\in E(G)$, that is, all edges are of type I or III. 

\begin{Rem} Computer simulations suggest that for a random graph all edges are of type I. It is possible to construct a graph with a type II edge. We have never seen a type III edge.

Theorem~\ref{edge-deletion} shows that if $e$ is of type I, then $\tau(G,x)\geq (1-x^2)\tau(G-e,x)$, while Theorem~\ref{edge-contraction} shows that if $e$ is of type III, then $\tau(G,x)\geq (1-x^2)\tau(G/e,x)$. A consequence of Lemma~\ref{log-derivative} is the following: if there is no type III edge, then
$$-\frac{\tau'(G,x)}{\tau(G,x)}=2\sum_{(u,v)\in E(G)}y_G(u,v)\leq 2e(G)\frac{x}{1-x^2},$$
and after integration and multiplication by $-1$ we get that $\ln \tau(G,x)\geq e(G)\ln(1-x^2)$, or equivalently $\tau(G,x)\geq (1-x^2)^{e(G)}$.

Below we will show that if there is no type II edge, then all edges are of type I, see Theorem~\ref{M-y}. Furthermore, if $x\in (0,\frac{1}{\Delta-1})$, then all edges are of type I. We will also show that if $G$ is a vertex-transitive graph, then all edges are of type I for all $x\in (0,1)$, see Theorem~\ref{vertex-transitive}.
\end{Rem}

In this section we study graphs with only edges of type I. We will utilize the classical theory of $M$-matrices that is widely studied in matrix analysis.

\begin{Th} \label{M-y}
Suppose that for some graph $G$ and some $x\in [0,1)$ the matrix $\B_G(x)$ is an $M$-matrix. Furthermore, suppose that the edge $(u,v)$ is in a clique $K_r$. Then $y_G(u,v)\leq \frac{x}{(1-x)(1+(r-1)x)}$. In particular, we have $y_G(u,v)\leq \frac{x}{1-x^2}$ for all edges $(u,v)$ in this case.
\end{Th}

\begin{Rem} In words, Theorem~\ref{M-y} asserts that if there are no edges of type II, then all edges are actually  of type I.  
\end{Rem}

\begin{proof}
Let us partition $\A_G(x)=\left(\begin{array}{cc} A_{11} & A_{12} \\ A_{21} & A_{22} \end{array}\right)$ such that $A_{11}$ corresponds to the clique of size $r$ containing the edge $(u,v)$. Let $\B_G(x)=\left(\begin{array}{cc} B_{11} & B_{12} \\ B_{21} & B_{22} \end{array}\right)$ be the corresponding decomposition of the inverse matrix. Then
$A_{11}=(B_{11}-B_{12}B_{22}^{-1}B_{21})^{-1}$  or in other words,
$B_{11}-A_{11}^{-1}=(-B_{12})B_{22}^{-1}(-B_{21})$. Here $B_{22}$ is an invertible $M$-matrix since it is positive definite and $\B_G(x)$ has non-positive off-diagonal elements. Hence the matrices $-B_{12},B_{22}^{-1},-B_{21}$ are all non-negative as $\B_G(x)$ is an $M$-matrix. Hence all elements of $B_{11}$ is larger than the corresponding element of $A_{11}^{-1}$. The off-diagonal elements of $A_{11}^{-1}$ are all $-\frac{x}{(1-x)(1+(r-1)x)}$. Hence
$$-y_G(u,v)\geq -\frac{x}{(1-x)(1+(r-1)x)},$$
equivalently,
$$y_G(u,v)\leq \frac{x}{(1-x)(1+(r-1)x)}.$$
Clearly, the inequality
$$y_G(u,v)\leq \frac{x}{1-x^2}$$
follows from the fact that an edge is in a $K_2$ so we can apply the claim to $r=2$.
\end{proof}

\begin{Rem}
It is also possible to prove that if $\B_G(x)$ is an M-matrix, then $z_G(u,v)\geq x^{\mathrm{dist}(u,v)}$ for any vertices $u$ and $v$. Note that for trees we have equality in the bounds $y_G(u,v)\leq \frac{x}{1-x^2}$ and $z_G(u,v)\geq x^{\mathrm{dist}(u,v)}$.
\end{Rem}

\begin{Def} Let
$$M(G)=\sup \{x \ |\ \B_G(t) \ \ \mbox{is an $M$-matrix for $t\in [0,x]$}\}.$$
We say that a graph $G$ is an $M$-graph if $M(G)=1$.
\end{Def}

Clearly, $M(G)\geq 0$ since $\B_G(0)$ is an $M$-matrix. We will show that $M(G)\geq \frac{1}{\Delta-1}$, where $\Delta$ is the largest degree of $G$.

\begin{Cor} \label{M-ineq} For $x\in [0,M(G)]$ we have
$$\frac{\tau'(G,x)}{\tau(G,x)}\geq e(G)\frac{\tau'(K_2,x)}{\tau(K_2,x)}.$$
\end{Cor}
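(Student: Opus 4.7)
The plan is to combine the logarithmic-derivative formula from Lemma~\ref{log-derivative} with the edgewise upper bound on $y_G(u,v)$ furnished by Theorem~\ref{M-y}. The inequality we must prove is, after expanding $\tau(K_2,x)=1-x^2$, exactly
\[
\frac{\tau'(G,x)}{\tau(G,x)} \;\geq\; -\,\frac{2x\,e(G)}{1-x^2}.
\]

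First I would compute the right-hand side: since $\tau(K_2,x)=1-x^2$, one has $\tau'(K_2,x)/\tau(K_2,x) = -2x/(1-x^2)$, so the assertion really is a clean upper bound on $-\tau'(G,x)/\tau(G,x)$ in terms of $x$ and $e(G)$. Next, by Lemma~\ref{log-derivative},
\[
-\frac{\tau'(G,x)}{\tau(G,x)} \;=\; 2\sum_{(u,v)\in E(G)} y_G(u,v),
\]
so it suffices to bound $\sum_{(u,v)\in E(G)} y_G(u,v)$ from above by $e(G)\cdot x/(1-x^2)$.

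For the edgewise bound, I would invoke the definition of $M(G)$: for $x\in[0,M(G)]$ the matrix $\B_G(x)$ is an $M$-matrix. Theorem~\ref{M-y} then applies with the clique $K_r$ taken to be the edge $(u,v)$ itself (i.e.\ $r=2$), yielding $y_G(u,v)\leq x/(1-x^2)$ for every edge. Summing this inequality over all $e(G)$ edges gives
\[
\sum_{(u,v)\in E(G)} y_G(u,v) \;\leq\; \frac{e(G)\,x}{1-x^2},
\]
and multiplying by $2$ and reversing the sign finishes the argument.

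There is no real obstacle here; the corollary is essentially a one-line consequence of packaging Lemma~\ref{log-derivative} and Theorem~\ref{M-y}. The only point that deserves a sentence in the write-up is the boundary case $x=0$, where $\B_G(0)=I$ forces $y_G(u,v)=0$ for every edge, so both sides of the target inequality vanish and the result is trivially valid at the left endpoint of the interval.
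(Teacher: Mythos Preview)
Your proposal is correct and follows essentially the same route as the paper: invoke Lemma~\ref{log-derivative} for the logarithmic-derivative formula, then apply the edgewise bound $y_G(u,v)\le x/(1-x^2)$ from Theorem~\ref{M-y} (with $r=2$) and sum over edges. The only cosmetic difference is that the paper singles out the right endpoint $x=M(G)$ and handles it by continuity of the inequality, whereas you assert the $M$-matrix property directly on the closed interval (which is fine, since the off-diagonal entries of $\B_G(x)$ are continuous in $x$); your remark about $x=0$ is harmless but unnecessary.
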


\begin{proof} This corollary immediately follows from Lemma~\ref{log-derivative} and the fact that for $x\in [0,M(G))$ we have $y_G(u,v)\leq \frac{x}{1-x^2}$ for adjacent vertices $u$ and $v$. Indeed, 
$$-\frac{\tau'(G,x)}{\tau(G,x)}=2\sum_{(u,v)\in E(G)}y_G(u,v)\leq 2e(G)\frac{x}{1-x^2}.$$
For the point $x=M(G)$ the claim follows from continuity.
\end{proof}

Our next goal is to prove that for small enough $x$ the matrix $\B_G(x)$ is indeed an $M$-matrix, see Theorem~\ref{B is M-matrix}. We need some preparation.

\begin{Lemma} \label{small-x-M-matrix} Suppose that for some graph $G$ and some $x\in (0,1)$ the matrix $\B_G(x)$ is an $M$-matrix. Furthermore, suppose that $0<x< \frac{1}{\Delta-1}$, where $\Delta$ is the largest degree. Then  we have \\
(a) $\B_G(x)$ is diagonally dominant,\\
(b) $z_G(u,v)<x$ for all $(u,v)\notin E(G)$.
\end{Lemma}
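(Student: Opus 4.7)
The plan is to prove (a) first by a direct row-sum computation using Theorem~\ref{M-y}, and then feed the resulting strict diagonal dominance into a maximum principle argument to obtain (b). For (a), diagonal dominance at vertex $u$ is the inequality $\sum_{v\in N(u)} y_G(u,v) \leq \frac{1}{1-x}$, since the diagonal entry is $1 + x\sum_{v\in N(u)} y_G(u,v)$ and the off-diagonal absolute values sum to $\sum_{v\in N(u)} y_G(u,v)$. Theorem~\ref{M-y} gives $y_G(u,v) \leq \frac{x}{1-x^2}$ edge-by-edge, so summing over the at most $\Delta$ neighbors of $u$ produces the upper bound $\frac{\Delta x}{(1-x)(1+x)}$. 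The inequality $\frac{\Delta x}{(1-x)(1+x)} \leq \frac{1}{1-x}$ rearranges to $x(\Delta-1) \leq 1$, and under the hypothesis $x < \frac{1}{\Delta-1}$ this becomes strict, so $\B_G(x)$ is in fact strictly diagonally dominant.

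For (b), I would use the identity already derived in Section~\ref{basic-properties} from $(\A_G(x)\B_G(x))_{u,w}=0$ at a non-edge $(u,w)$:
$$z_G(u,w) \;=\; \sum_{v\in N(w)} c_v\, z_G(u,v), \qquad c_v = \frac{y_G(v,w)}{1 + x\sum_{r\in N(w)} y_G(r,w)}.$$
The $M$-matrix property makes every $y_G(v,w) \geq 0$, so the $c_v$ are non-negative. Writing $S := \sum_{r\in N(w)} y_G(r,w)$, we have $\sum_v c_v = S/(1+xS)$, a function strictly increasing in $S$. Part (a) yields $S < \frac{1}{1-x}$ strictly, and substituting gives $\sum_v c_v < 1$ at every non-edge $(u,w)$.

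I then finish by a maximum principle. Set $M^{*} := \max\{z_G(u,v) : u \neq v,\ (u,v) \notin E(G)\}$, and note that $M^{*} \geq 0$ because $\A_G(x)$ is the inverse of an $M$-matrix and therefore has non-negative entries (Theorem~\ref{M-matrix-inverse}). Assume for contradiction that $M^{*} \geq x$ and pick $(u^{*},w^{*})$ attaining it. Since $(u^{*},w^{*}) \notin E(G)$, the vertex $u^{*}$ is not in $N(w^{*})$; hence for every $v \in N(w^{*})$ we have $v \neq u^{*}$, and $z_G(u^{*},v)$ equals $x \leq M^{*}$ when $(u^{*},v) \in E(G)$ and is at most $M^{*}$ when $(u^{*},v) \notin E(G)$. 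Plugging into the identity gives
$$M^{*} \;=\; z_G(u^{*},w^{*}) \;=\; \sum_{v\in N(w^{*})} c_v\, z_G(u^{*},v) \;\leq\; \Bigl(\sum_v c_v\Bigr) M^{*} \;<\; M^{*},$$
a strict contradiction provided $M^{*} > 0$; while $M^{*} = 0 < x$ gives the conclusion directly. Thus $M^{*} < x$ in all cases, proving (b). The step that requires the most care is establishing the \emph{strict} bound $\sum_v c_v < 1$, since equality there would let the maximum principle saturate; this is precisely where the strict hypothesis $x < \frac{1}{\Delta-1}$, rather than $x \leq \frac{1}{\Delta-1}$, becomes indispensable.
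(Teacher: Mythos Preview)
Your proof is correct and follows essentially the same approach as the paper. The only cosmetic difference is that the paper takes $z^{*}=\max_{u\neq v} z_G(u,v)$ over \emph{all} distinct pairs (including edges) and shows the maximum cannot sit on a non-edge, which spares the case split between $(u^{*},v)\in E(G)$ and $(u^{*},v)\notin E(G)$ and the separate treatment of $M^{*}=0$; but the underlying maximum-principle argument driven by strict diagonal dominance is identical.
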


\begin{proof} (a) By Theorem~\ref{M-y} we have $y_G(u,v)\leq \frac{x}{1-x^2}$ for adjacent vertices $u$ and $v$. The matrix $\B_G(x)$ is diagonally dominant if 
$$1+x\sum_{v\in N_G(u)}y_G(u,v)>\sum_{v\in N_G(u)}y_G(u,v).$$
This is satisfied since
$$(1-x)\sum_{v\in N_G(u)}y_G(u,v)\leq (1-x)\Delta \frac{x}{1-x^2}=\frac{\Delta x}{1+x}<1$$
if $x< \frac{1}{\Delta-1}$.
\medskip

(b) Let $z^*=\max_{u \neq v}z_G(u,v)$. Note that $z^*>0$ since $x>0$. We show that $z^*$ is only achieved on adjacent vertices, and consequently its value is $x$. Suppose for contradiction that $z^*$ is achieved for some $(u,w)\notin E(G)$. Then 
$$z_G(w,u)=\sum_{v\in N_G(w)}z_G(v,u)\frac{y_G(v,w)}{1+x\sum_{r\in N_G(w)}y_G(r,w)}.$$
Then
\begin{align*}
z^*=z_G(w,u)&=\sum_{v\in N_G(w)}z_G(v,u)\frac{y_G(v,w)}{1+x\sum_{r\in N_G(w)}y_G(r,w)} \\
&\leq z^*\sum_{v\in N_G(w)}\frac{y_G(v,w)}{1+x\sum_{r\in N_G(w)}y_G(r,w)}<z^*\\
\end{align*}
by part (a) which is a contradiction.
\end{proof}

\begin{Lemma} Let $G$ be a graph and $e=(u,v)\in E(G)$. Then $y_{G}(u,v)=0$ if and only if $z_{G-e}(u,v)=x$.
\end{Lemma}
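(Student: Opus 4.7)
The plan is to use the two optimization characterizations established earlier in the paper: the primal $\mathcal{A}(G,x)$ (matrices maximizing $\det$) and the dual $\mathcal{B}(G,x)$ (Lemma~\ref{dual}). The key observation is that removing an edge $e$ from $G$ \emph{relaxes} the constraint in $\mathcal{A}$ (fewer required entries equal to $x$) but \emph{tightens} the constraint in $\mathcal{B}$ (one more off-diagonal entry forced to be zero). In symbols, $\mathcal{A}(G,x)\subseteq\mathcal{A}(G-e,x)$ and $\mathcal{B}(G-e,x)\subseteq\mathcal{B}(G,x)$. The two conditions in the statement are exactly the compatibility conditions that force the unique maximizer in one problem to actually lie in the smaller feasible set for the other problem, which then forces $\A_G(x)=\A_{G-e}(x)$.

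For the ``only if'' direction, I would first assume $y_G(u,v)=0$, i.e., $\B_G(x)_{u,v}=0$. Since $\B_G(x)\in\mathcal{B}(G,x)$, its only possibly non-zero off-diagonal entries sit on edges of $G$; together with $\B_G(x)_{u,v}=0$, these non-zero entries are confined to edges of $G-e$. I would then verify that the diagonal parametrization also matches that of $\mathcal{B}(G-e,x)$: the defining identity $B_{w,w}=1+x\sum_{r\in N_G(w)}y_G(w,r)$ rewrites, at $w=u$, as $1+x\cdot 0+x\sum_{r\in N_{G-e}(u)}y_G(u,r)$ (and similarly at $w=v$), matching the $\mathcal{B}(G-e,x)$ convention. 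Thus $\B_G(x)\in\mathcal{B}(G-e,x)$, so by Lemma~\ref{dual} applied to $G-e$, $\det\B_G(x)\le\det\B_{G-e}(x)$. The reverse inequality $\det\B_{G-e}(x)\le\det\B_G(x)$ comes from $\mathcal{B}(G-e,x)\subseteq\mathcal{B}(G,x)$ and Lemma~\ref{dual} applied to $G$. Strict log-concavity yields uniqueness, hence $\B_G(x)=\B_{G-e}(x)$ and therefore $\A_G(x)=\A_{G-e}(x)$. In particular $z_{G-e}(u,v)=\A_{G-e}(x)_{u,v}=\A_G(x)_{u,v}=x$.

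For the ``if'' direction, I would argue the dual way. Assume $z_{G-e}(u,v)=x$. Then $\A_{G-e}(x)$ is positive definite, has $1$'s on its diagonal, equals $x$ on every edge of $G-e$, and by hypothesis also at the entry $(u,v)$. Hence $\A_{G-e}(x)\in\mathcal{A}(G,x)$. By Theorem~\ref{unique-maximizer} and the fact that $\mathcal{A}(G,x)\subseteq\mathcal{A}(G-e,x)$, we get $\det\A_G(x)\le\det\A_{G-e}(x)\le\det\A_G(x)$, whence equality and uniqueness force $\A_G(x)=\A_{G-e}(x)$, so $\B_G(x)=\B_{G-e}(x)$. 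Since $(u,v)\notin E(G-e)$, Theorem~\ref{unique-maximizer} gives $\B_{G-e}(x)_{u,v}=0$, i.e.\ $y_G(u,v)=0$.

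The proof is almost entirely bookkeeping of which feasible set contains which matrix, so no single step is a serious obstacle. The one point that requires small care is in the ``only if'' direction, namely checking that the diagonal parametrization of $\B_G(x)$ is consistent with being regarded as an element of $\mathcal{B}(G-e,x)$ (one must check that the sum defining the diagonal does not accidentally change when we discard the term $xy_G(u,v)$, which is ensured precisely by the hypothesis $y_G(u,v)=0$). Once that is in place, the two inclusions of feasible sets and the uniqueness of each optimizer close the argument immediately.
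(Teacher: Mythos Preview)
Your proposal is correct and follows essentially the same route as the paper's own proof: the primal optimization $\mathcal{A}(G,x)$ for the direction $z_{G-e}(u,v)=x\Rightarrow y_G(u,v)=0$, and the dual optimization $\mathcal{B}(G,x)$ (Lemma~\ref{dual}) for the direction $y_G(u,v)=0\Rightarrow z_{G-e}(u,v)=x$. Your explicit check that the diagonal parametrization of $\B_G(x)$ still matches the $\mathcal{B}(G-e,x)$ convention is a point the paper leaves implicit, so in that respect your write-up is slightly more careful.
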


\begin{proof} First suppose that $z_{G-e}(u,v)=x$. Note that $\tau(G-e,x)\geq \tau(G,x)$ is always true. If $z_{G-e}(u,v)=x$, then the matrix $\A_{G-e}(x)$ satisfies the conditions to be in $\mathcal{A}(G,x)$ so $\tau(G,x)\geq \tau(G-e,x)$. Thus $\tau(G,x)=\tau(G-e,x)$ and since the maximizer is unique we have $\A_G(x)=\A_{G-e}(x)$. But then $\B_G(x)=\B_{G-e}(x)$ implying that $y_G(u,v)=0$.

Next suppose that $y_{G}(u,v)=0$. This time we use Lemma~\ref{dual}: $$\max_{B\in \mathcal{B}(G,x)}\det(B)\geq \max_{B\in \mathcal{B}(G-e,x)}\det(B)$$ is always true, and if for the maximizing matrix $\B_G(x)\in \mathcal{B}(G,x)$ we have $y_{G}(u,v)=0$, then the opposite inequality is also true. Since the maximizing matrix is unique we get that they are equal, consequently, for their inverses we have $\A_G(x)=\A_{G-e}(x)$ implying that $z_{G-e}(u,v)=x$.
\end{proof}

Now we are ready to prove Theorem~\ref{B is M-matrix}.
In plain words, this theorem says that if $x$ is small, that is, $x\in (0,\frac{1}{\Delta-1})$, then all edges are of type I.

\begin{proof}[Proof of Theorem~\ref{B is M-matrix}] We will show that $y_G(u,v)>0$ for every $x$ in the interval $(0,\frac{1}{\Delta-1})$ for all $(u,v)\in E(G)$. Suppose for contradiction that it is not true and consider a counterexample with smallest possible number of edges.

It is not hard to see that $y_G(u,v)=x+O(x^2)$ for all $(u,v)\in E(G)$. This means that for every small enough positive $x$ we have $y_G(u,v)>0$. If for some $x$ in the interval $(0,\frac{1}{\Delta-1})$ and for some $(u,v)\in E(G)$ we have $y_G(u,v)<0$, then by the continuity of $y_G(u,v)$ we know that there must be an $x$ in this interval where $y_G(u,v)=0$. Then $z_{G-e}(u,v)=x$. On the other hand, $G-e$ has fewer edges and $\Delta(G-e)\leq \Delta(G)$ so $\B_{G-e}(x)$ is an $M$-matrix by the assumption on $G$ being the smallest counterexample. Then  $0<x< \frac{1}{\Delta-1}$ and  the fact that $\B_{G-e}(x)$ is an $M$-matrix implies that $z_{G-e}(u,v)<x$ by Lemma~\ref{small-x-M-matrix}, contradiction.
\end{proof}

Now we are ready to prove Theorem~\ref{log-ineq} for the interval $[0,\frac{1}{\Delta-1}]$.

\begin{proof}[Proof of Theorem~\ref{log-ineq} for the first interval.]
This is now trivial from Corollary~\ref{M-ineq} and Theorem~\ref{B is M-matrix}. 
\end{proof}

\subsection{Regular and vertex-transitive graphs} In this section we study regular and vertex-transitive graphs. In particular, we prove Theorems~\ref{regular-complete} and ~\ref{vertex-transitive}.

Let
$$Y_u=\sum_{v\in N_G(u)}y_G(u,v).$$

\begin{Lemma} \label{Schur complement bounds}
Let $G$ be an arbitrary graph and $x\in (0,1)$. Then
\medskip

\noindent (a) If $u$ is not an isolated vertex, then $Y_u\geq \frac{x}{1-x^2}$. \\
\noindent (b) If $(u,v)\in E(G)$, then $\frac{2}{1-x}\leq (1+xY_u)+(1+xY_v)+2y_G(u,v)$.\\
\noindent (c) If $(u,v)\notin E(G)$, then $\frac{2}{1-z_G(u,v)}\leq (1+xY_u)+(1+xY_v)$.
\end{Lemma}

\begin{proof}
We first we prove part (a) and (b). Let us partition $\A_G(x)=\left(\begin{array}{cc} A_{11} & A_{12} \\ A_{21} & A_{22} \end{array}\right)$ such that $A_{11}$ corresponds to the $2\times 2$ matrix of $u$ and $v$ where $e=(u,v)$. (In part (a), simply choose any neighbor of the vertex $u$ and let it be $v$.) Let $\B_G(x)=\left(\begin{array}{cc} B_{11} & B_{12} \\ B_{21} & B_{22} \end{array}\right)$ be the corresponding decomposition of the inverse matrix. Then $A_{11}^{-1}=B_{11}-B_{12}B_{22}^{-1}B_{21}$.
Since $A_{11}=\left(\begin{array}{cc} 1 & x \\ x & 1\end{array}\right)$ we have 
$A_{11}^{-1}=\left(\begin{array}{cc} \frac{1}{1-x^2} & -\frac{x}{1-x^2} \\ -\frac{x}{1-x^2} & \frac{1}{1-x^2}\end{array}\right)$.
Let $-\underline{y_u}$ and $-\underline{y_v}$ be the two column vectors of $B_{21}$, so the vectors $\underline{y_u},\underline{y_v}$ contain the entries $(y_G(u,w))_w$ and  $(y_G(v,w))_w$ for $w\in V(G)\setminus \{u,v\}$. Then we obtain the following equations by comparing $A_{11}^{-1}=B_{11}-B_{12}B_{22}^{-1}B_{21}$:
$$1+xY_u-\underline{y_u}^TB_{22}^{-1}\underline{y_u}=\frac{1}{1-x^2},\ \ 1+xY_v-\underline{y_v}^TB_{22}^{-1}\underline{y_v}=\frac{1}{1-x^2},\ \ -y_G(u,v)-\underline{y_u}^TB_{22}^{-1}\underline{y_v}=-\frac{x}{1-x^2}.$$
From the first equation and the fact that $B_{22}^{-1}$ is positive definite we immediately get that
$$1+xY_u\geq 1+xY_u-\underline{y_u}^TB_{22}^{-1}\underline{y_u}=\frac{1}{1-x^2}$$
implying that  $Y_u\geq \frac{x}{1-x^2}$. This proves part (a). To prove part (b) observe that
\begin{align*}
0&\leq (\underline{y_u}-\underline{y_v})^TB_{22}^{-1}(\underline{y_u}-\underline{y_v})\\
 &=(1+xY_u)-\frac{1}{1-x^2}+(1+xY_v)-\frac{1}{1-x^2}+2\left(y_G(u,v)-\frac{x}{1-x^2}\right)\\
 &=(1+xY_u)+(1+xY_v)+2y_G(u,v)-\frac{2}{1-x}
\end{align*}
This proves part (b). 
\bigskip

The proof of part (c) is completely analogous. Set $z=z_G(u,v)$. We partition $\A_G(x)=\left(\begin{array}{cc} A_{11} & A_{12} \\ A_{21} & A_{22} \end{array}\right)$ such that $A_{11}$ corresponds to the $2\times 2$ matrix of $u$ and $v$ where $(u,v)\notin E(G)$ this time. Let $\B_G(x)=\left(\begin{array}{cc} B_{11} & B_{12} \\ B_{21} & B_{22} \end{array}\right)$ be the corresponding decomposition of the inverse matrix. Then
$A_{11}^{-1}=B_{11}-B_{12}B_{22}^{-1}B_{21}$.
Since $A_{11}=\left(\begin{array}{cc} 1 & z \\ z & 1\end{array}\right)$ we have 
$A_{11}^{-1}=\left(\begin{array}{cc} \frac{1}{1-z^2} & -\frac{z}{1-z^2} \\ -\frac{z}{1-z^2} & \frac{1}{1-z^2}\end{array}\right)$.
Let $-\underline{y_u}$ and $-\underline{y_v}$ be the two column vectors of $B_{21}$, so the vectors $\underline{y_u},\underline{y_v}$ contain the entries $(y_G(u,w))_w$ and  $(y_G(v,w))_w$ for $w\in V(G)\setminus \{u,v\}$. Then we obtain the following equations by comparing $A_{11}^{-1}=B_{11}-B_{12}B_{22}^{-1}B_{21}$:
$$1+xY_u-\underline{y_u}^TB_{22}^{-1}\underline{y_u}=\frac{1}{1-z^2},\ \ 1+xY_v-\underline{y_v}^TB_{22}^{-1}\underline{y_v}=\frac{1}{1-z^2},\ \ -\underline{y_u}^TB_{22}^{-1}\underline{y_v}=-\frac{z}{1-z^2}.$$
Now observe that
\begin{align*}
0&\leq (\underline{y_u}-\underline{y_v})^TB_{22}^{-1}(\underline{y_u}-\underline{y_v})\\
 &=(1+xY_u)-\frac{1}{1-z^2}+(1+xY_v)-\frac{1}{1-z^2}-2\frac{z}{1-z^2} \\
 &=(1+xY_u)+(1+xY_v)-\frac{2}{1-z}
\end{align*}
This proves part (c). 
\end{proof}

Now we are ready to prove Theorem~\ref{regular-complete}.

\begin{proof}[Proof of Theorem~\ref{regular-complete}]
Let $n$ denote the number of vertices of $G$. By part (a) of the lemma we have 
$$\frac{2}{1-x}\leq (1+xY_u)+(1+xY_v)+2y_G(u,v).$$
for every $(u,v)\in E(G)$. By summing this for all edges we get that
$$\frac{nd}{1-x}\leq \sum_{u\in V(G)}d(1+xY_u)+2\sum_{(u,v)\in E(G)}y_G(u,v)=nd+2(1+dx)\sum_{(u,v)\in E(G)}y_G(u,v).$$
Hence 
$$2\sum_{(u,v)\in E(G)}y_G(u,v)\geq \frac{ndx}{(1-x)(1+dx)}.$$
Whence
$$-\frac{\tau'(G,x)}{\tau(G,x)}\geq -\frac{n}{d+1}\frac{\tau'(K_{d+1},x)}{\tau(K_{d+1},x)}.$$
After integration and multiplication with $-1$ we get that
$$\ln \tau(G,x)\leq \frac{n}{d+1}\ln \tau(K_{d+1},x).$$
This is equivalent with the statement.
\end{proof}

Another interesting application of Lemma~\ref{Schur complement bounds} is that vertex-transitive graphs are \\ $M$-graphs, that is, if $G$ is vertex-transitive, then for all $x\in (0,1)$ all edges are of type I.

\begin{proof}[Proof of Theorem~\ref{vertex-transitive}]
Since $G$ is vertex-transitive the vector $\underline{1}$ is an eigenvector of $\B_G(x)$, and since it is positive definite we have $\B_G(x)\underline{1}=\lambda \underline{1}$, where $\lambda>0$. Thus
$$1+(x-1)Y_u=(1+xY_u)-\sum_{v\in N_G(u)}y_G(u,v)=\lambda> 0.$$
Hence $Y_u<\frac{1}{1-x}$. Together with part (b) of  Lemma~\ref{Schur complement bounds}, that is, with
$$\frac{2}{1-x}\leq (1+xY_u)+(1+xY_v)+2y_G(u,v)$$ we get that $y_G(u,v)>0$.
Together with part (c) of Lemma~\ref{Schur complement bounds}, that is, with
$$\frac{2}{1-z_G(u,v)}\leq (1+xY_u)+(1+xY_v).$$
we get that 
$$\frac{2}{1-z_G(u,v)}<\frac{2}{1-x}.$$
Equivalently, $z_G(u,v)<x$.
\end{proof}

\begin{Rem}
We could have proved slightly stronger inequalities as Lemma~\ref{log-derivative_upper_bound} asserts that
$$2\sum_{e\in E(G)}y_G(u,v)\leq \frac{n-1}{1-x}$$
implying that $Y_u\leq \frac{n-1}{n}\cdot \frac{1}{1-x}$ for a vertex-transitive graph. This, in turn, implies that
$$y_G(u,v)\geq \frac{1}{n}\cdot \frac{x}{1-x}.$$
\end{Rem}

\section{Ihara zeta function} \label{Ihara-zeta_section}

In this section we relate the function $\tau(G,x)$ to the Ihara zeta function. This will enable us to prove Theorem~\ref{tightness} for the first interval and Theorem~\ref{large girth limit}.
\medskip

Let $I$ be the identity matrix of size $|V(G)|\times |V(G)|$. Furthermore, let $A$ be the adjacency matrix of the graph $G$ and let $D$ be the diagonal matrix consisting of the degrees of the graph $G$. Bass \cite{Bas} proved the following expression for the so-called  Ihara zeta function \cite{Ih,Has,KotSun,StaTer} of the graph $G$:
$$\zeta_G(x)=\frac{1}{(1-x^2)^{|E(G)|-|V(G)|}\det(I-xA+(D-I)x^2)}.$$
This is not the original definition, but for the sake of simplicity we will consider this expression to be the definition of the Ihara zeta function.
Let 
$$\textbf{Z}_G(x)=\frac{1}{1-x^2}\left(I-xA+(D-I)x^2\right).$$
If $|x|<\frac{1}{\Delta-1}$, then $\textbf{Z}_G(x)$ is diagonally dominant, consequently positive definite. Moreover, $\textbf{Z}_G(x)=B(\underline{t})$, where $t(u,v)=\frac{x}{1-x^2}$ for all $(u,v)\in E(G)$. Hence $\textbf{Z}_G(x)\in \mathcal{B}(G,x)$ for $|x|<\frac{1}{\Delta-1}$. This shows that
$$\det(\textbf{Z}_G(x))\leq \det(\B_G(x))=\frac{1}{\tau(G,x)},$$
and consequently,
$$\zeta_G(x)(1-x^2)^{e(G)}\geq \tau(G,x).$$

Now we are ready to prove Theorem~\ref{tightness} for the interval $[0,\frac{1}{\Delta-1}]$.

\begin{proof}[Proof of Theorem~\ref{tightness} for the first interval.]
Recall that we need to prove that if $G$ is graph with $e(G)$ edges, largest degree $\Delta$ and girth $g$, and $x\in [0,\frac{1}{\Delta-1})$, then
$$\left| \frac{\ln \tau(G,x)}{e(G)}-\ln (1-x^2)\right|\leq 2\frac{((\Delta-1)x)^g}{1-(\Delta-1)x}.$$
Since 
$$\tau(G,x)\geq (1-x^2)^{e(G)}$$
for $x\in (0,1)$ we immediately get that
$$\frac{\ln \tau(G,x)}{e(G)}\geq \ln(1-x^2).$$
We need to prove the inequality
$$ \frac{\ln \tau(G,x)}{e(G)}\leq \ln (1-x^2)+2\frac{((\Delta-1)x)^g}{1-(\Delta-1)x}.$$
We will use the fact for $|x|<\frac{1}{\Delta-1}$ we have
$$\tau(G,x)\leq \zeta_G(x)(1-x^2)^{e(G)}.$$
So we only need to prove that for $|x|<\frac{1}{\Delta-1}$ we have
$$\frac{\ln \zeta_G(x)}{e(G)}\leq 2 \frac{((\Delta-1)|x|)^g}{1-(\Delta-1)|x|}.$$
Here we use an alternative description of $\zeta_G(x)$ due to Bass \cite{Bas}.
Let us replace all edges of the graph $G$ with a pair of directed edges going opposite ways. Then we can define the directed edge matrix $M$ of size $2e(G)\times 2e(G)$ as follows: for directed edges $e$ and $f$ let $M_{ef}=1$ if the head of $e$ is the tail of $f$, and the tail of $e$ is not the head of $f$, otherwise all entries of $M$ are $0$. Then
$$\zeta_G(x)^{-1}=\det(I-xM),$$
where $I$ is the identity matrix of size $2e(G)\times 2e(G)$. Let $\rho_1,\dots ,\rho_{2e(G)}$ be the eigenvalues of $M$. These eigenvalues are not necessarily real, but $|\rho_i|\leq \Delta-1$ as every row of $M$ contains at most as many $1$'s. Hence
$$\ln \det(I-xM)=\sum_{i=1}^{2e(G)}\ln (1-x\rho_i)=\sum_{i=1}^{2e(G)}\sum_{k=1}^{\infty}\frac{-(x\rho_i)^k}{k}=\sum_{k=1}^{\infty}\frac{-1}{k}\sum_{i=1}^{2e(G)}(x\rho_i)^k.$$
Now observe that if $k\leq g-1$, then
$$\sum_{i=1}^{2e(G)}\rho_i^k=\mathrm{Tr}M^k=0.$$
Hence
$$\ln \det(I-xM)=\sum_{k=g}^{\infty}\frac{-1}{k}\sum_{i=1}^{2e(G)}(x\rho_i)^k.$$
Hence
$$\ln \zeta_G(x)\leq \sum_{k=g}^{\infty}\frac{1}{k}\sum_{i=1}^{2e(G)}|x\rho_i|^k\leq \sum_{k=g}^{\infty}\sum_{i=1}^{2e(G)}|x\rho_i|^k\leq \sum_{i=1}^{2e(G)}\frac{|x\rho_i|^g}{1-|x\rho_i|}\leq 2e(G)\frac{((\Delta-1)|x|)^g}{1-(\Delta-1)|x|}.$$
Hence for $x\in [0,\frac{1}{\Delta-1})$ we have
$$\frac{\ln \tau(G,x)}{e(G)}\leq \ln (1-x^2)+2\frac{((\Delta-1)|x|)^g}{1-(\Delta-1)|x|}.$$

\end{proof}

\begin{proof}[Proof Theorem~\ref{large girth limit}]
This is trivial from part (a) of Theorem~\ref{tightness}.
\end{proof}

\section{Spanning trees of regular graphs} \label{spanning tree}

In this section, we prove an upper bound on the number of spanning trees of regular graphs. This result is only weaker in the subexponential term than the corresponding result of B. McKay \cite{mckay}, and its proof is completely different.

\begin{Th} \label{McKay's bound} Let $\tau(G)$ be the number of spanning trees of a $d$--regular graph $G$ on $n$ vertices. Then
$$\tau(G)\leq \frac{e(d-1)}{d(d-2)}\left(\frac{(d-1)^{d-1}}{(d^2-2d)^{d/2-1}}\right)^n,$$
where $e$ is the base of the natural logarithm.
\end{Th}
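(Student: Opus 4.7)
The plan is to combine the lower bound $\tau(G,x)\geq(1-x^2)^{e(G)}$ of Theorem~\ref{tau-main} with the Ihara-type upper bound on $\tau(G,x)$ derived at the end of Section~\ref{M-graphs}, and then to peel off the single zero Laplacian eigenvalue in order to recover $\tau(G)$. Assume $d\geq 3$ (the stated bound is vacuous otherwise) and write $d=\mu_1\geq\mu_2\geq\cdots\geq\mu_n$ for the eigenvalues of the adjacency matrix of the $d$-regular graph $G$. For $0\leq x<1/(d-1)$ the matrix $\textbf{Z}_G(x)=\frac{1}{1-x^2}(I-xA+(d-1)x^2I)$ is diagonally dominant, hence positive definite, and belongs to $\mathcal{B}(G,x)$; Lemma~\ref{dual} therefore yields $\det\textbf{Z}_G(x)\leq 1/\tau(G,x)$, i.e.\
$$\tau(G,x)\leq\frac{(1-x^2)^n}{\prod_{i=1}^n(1-x\mu_i+(d-1)x^2)}.$$
Combined with $\tau(G,x)\geq(1-x^2)^{nd/2}$ from Theorem~\ref{tau-main} this rearranges to
$$\prod_{i=1}^n(1-x\mu_i+(d-1)x^2)\leq(1-x^2)^{-n(d-2)/2}.$$

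The next step is to isolate the contribution of $\mu_1=d$. I would write $1-x\mu_i+(d-1)x^2=x(\rho(x)-\mu_i)$ where $\rho(x)=(1+(d-1)x^2)/x$. A direct calculation gives $\rho(x)-d=(1-x)(1-(d-1)x)/x\geq 0$ on $(0,1/(d-1)]$, so $\rho(x)\geq d\geq\mu_i$ and the pointwise estimate $1-x\mu_i+(d-1)x^2\geq x(d-\mu_i)$ holds for every $i\geq 2$, while the factor for $i=1$ is exactly $(1-x)(1-(d-1)x)$. Since $\prod_{i\geq 2}(d-\mu_i)=n\tau(G)$ by the Matrix--Tree Theorem, multiplying these bounds into the previous inequality produces, for every $x\in(0,1/(d-1))$,
$$\tau(G)\leq\frac{1}{n\,x^{n-1}(1-x)^{n(d-2)/2+1}(1+x)^{n(d-2)/2}(1-(d-1)x)}.$$

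Finally I would minimise the right-hand side in $x$. The leading-order optimality condition in $n$ pushes $x$ to the boundary $1/(d-1)$; balancing the finite-$n$ penalty coming from the factor $1-(d-1)x$ against the leading contribution locates the optimum at $x^{\ast}=\tfrac{1}{d-1}-\tfrac{d}{n(d-1)^2}+O(n^{-2})$. Substituting $x^{\ast}$ and using $(1+a/n)^n\leq e^a$ on the three factors $(1-x)^{n(d-2)/2+1}$, $(1+x)^{n(d-2)/2}$ and $x^{n-1}$, the corresponding $1/n$-corrections $\tfrac{d}{2(d-1)}$, $-\tfrac{d-2}{2(d-1)}$ and $-\tfrac{d}{d-1}$ add up to exactly $-1$, supplying the constant $e$ in the prefactor, while the remaining algebra collects into the stated expression $\frac{e(d-1)}{d(d-2)}\bigl(\tfrac{(d-1)^{d-1}}{(d^2-2d)^{d/2-1}}\bigr)^n$. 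The main obstacle is precisely this last step: since the right-hand side diverges at $x=1/(d-1)$, a naive evaluation on the boundary is useless, and only the simultaneous careful tracking of these three $O(1/n)$ corrections at the near-boundary optimum $x^{\ast}$ produces the sharp prefactor $e(d-1)/(d(d-2))$ that appears in McKay's bound.
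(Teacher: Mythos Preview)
Your derivation of the intermediate bound
\[
\tau(G)\leq F(x)=\frac{1}{n\,x^{\,n-1}(1-x)^{n(d-2)/2+1}(1+x)^{n(d-2)/2}\bigl(1-(d-1)x\bigr)}
\]
is correct, and the overall strategy (dual test matrix plus Theorem~\ref{tau-main}) is exactly the paper's mechanism. The route, however, differs: the paper does \emph{not} perturb $x$. It works directly at $x=1/(d-1)$ and instead perturbs the \emph{test matrix}, taking $B=\frac{1}{n}I+tL(G)$ with $t=\frac{n-1}{nd(1-x)}$. This $B$ is genuinely in $\mathcal B\bigl(G,\tfrac{1}{d-1}\bigr)$ (diagonally dominant with margin $1/n$), and $\det B=\prod_i(\frac{1}{n}+t\lambda_i)\ge t^{\,n-1}\tau(G)$ by simply dropping the $1/n$'s. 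This produces the bound $\tau(G)\le\bigl(\frac{n}{n-1}\bigr)^{n-1}\frac{d-1}{d(d-2)}\,C^n$ in closed form, and the single inequality $\bigl(\frac{n}{n-1}\bigr)^{n-1}\le e$ finishes it --- no optimization and no asymptotics.

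Your final step, by contrast, has a genuine gap. You need an \emph{upper} bound on $F(x^\ast)$, i.e.\ a \emph{lower} bound on each of the factors $x^{n-1},(1-x)^{n(d-2)/2+1},(1+x)^{n(d-2)/2}$ after substituting $x^\ast$. But $(1+a/n)^n\le e^a$ gives \emph{upper} bounds on these factors; applied as you describe, it only yields $F(x^\ast)\ge(\text{something})$, which is useless. The sentence ``the $1/n$-corrections add up to exactly $-1$'' is a first-order heuristic, not an inequality, and the second-order corrections all have the wrong sign. Your approach can be rescued --- the exact minimizer of $F$ is $x^{\ast\ast}=\frac{n-1}{n(d-1)+1}$, where $\frac{1-(d-1)x^{\ast\ast}}{1+x^{\ast\ast}}=\frac1n$ exactly, and one can check $F(x^{\ast\ast})\le\bigl(\frac{n}{n-1}\bigr)^{n-1}\frac{d-1}{d(d-2)}\,C^n$ by a direct (if messier) computation --- but as written the last paragraph does not prove the stated constant.
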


\begin{Def} For a graph $G$ the $V\times V$ Laplacian matrix $L(G)$ is defined as follows:
the diagonal element $L(G)_{vv}=d_v$, the degree of the vertex $v$, and for $u\neq v$ we have
$$L(G)_{uv}=\left\{\begin{array}{cc} -1 & \mbox{if}\ (u,v)\in E(G),\\ 0 & \mbox{if}\ (u,v)\notin E(G).\end{array}\right.$$
\end{Def}

The following lemma is a simple corollary of Kirchhoff's matrix-tree theorem (see Theorem 13.2.1 and Lemma 13.2.4 of \cite{GodRoy}). 

\begin{Lemma} Let $G$ be a graph on $n$ vertices. The Laplacian matrix $L(G)$ is positive semidefinite with $0$ being the smallest eigenvalue. Furthermore, if $\lambda_1\geq \dots \geq \lambda_n=0$ are the eigenvalues of $L(G)$, then the number of spanning trees $\tau(G)$ satisfies
$$\tau(G)=\frac{1}{n}\prod_{i=1}^{n-1}\lambda_i.$$
\end{Lemma}

\begin{proof}[Proof of Theorem~\ref{McKay's bound}] Let $x=\frac{1}{d-1}$ and let $t=\frac{n-1}{nd(1-x)}$.
Let us consider the matrix $B\in \mathcal{B}(G,x)$ for which $t(u,v)=t$ for all $(u,v)\in E(G)$. Then the obtained matrix $B$ is positive definite since it is diagonally dominant:
$$(1+xdt)-dt=1+(x-1)dt=1-\frac{n-1}{n}=\frac{1}{n}.$$
From this we can see that
$$B=\frac{1}{n}I+t\cdot L(G),$$
where $L(G)$ is the Laplacian-matrix of $G$. Let $\lambda_1\geq \dots \geq \lambda_n=0$ be the eigenvalues of the Laplacian-matrix. Then
$$\det(B)=\prod_{i=1}^n\left(t\lambda_i+\frac{1}{n}\right)=\frac{1}{n}\prod_{i=1}^{n-1}\left(t\lambda_i+\frac{1}{n}\right)\geq \frac{1}{n}\prod_{i=1}^{n-1}\left(t\lambda_i\right)=t^{n-1}\tau(G).$$
In the last step we have used the formula
$$\tau(G)=\frac{1}{n}\prod_{i=1}^{n-1}\lambda_i.$$
Note that $B\in \mathcal{B}(G,x)$ whence
$$\frac{1}{\tau(G,x)}=\det \B_G(x)\geq \det B\geq t^{n-1}\tau(G).$$
Thus 
$$\tau(G)\leq \frac{1}{t^{n-1}\tau(G,x)}=d^{n-1}\left(\frac{n}{n-1}\right)^{n-1}\frac{(1-x)^{n-1}}{\tau(G,x)}\leq ed^{n-1}\frac{(1-x)^{n-1}}{\tau(G,x)}.$$
Now we use Theorem~\ref{tau-main} and $x=\frac{1}{d-1}$ to get that
$$\tau(G,x)\geq (1-x^2)^{e(G)}=\left(1-\frac{1}{(d-1)^2}\right)^{nd/2}=\frac{d-2}{d-1}\left( \frac{d^{d/2}(d-2)^{d/2-1}}{(d-1)^{d-1}}\right)^n(1-x)^{n-1}.$$
From this we obtain that
$$\tau(G)\leq \frac{e(d-1)}{d(d-2)}\left(\frac{(d-1)^{d-1}}{(d^2-2d)^{d/2-1}}\right)^n.$$
\end{proof}

\section{Multivariate case} \label{multivariate}

In this section we consider the  non-homogeneous (multivariate) version of some of our claims. Since the proofs are straightforward modifications of the homogeneous cases we do not detail the proofs.

For a graph $G$ and $\underline{x}\in [0,1]^{E(G)}$ let $\mathcal{A}(G,\underline{x})$ be the set of positive definite matrices $A$ which has diagonal elements $1$'s, and if $(u,v)\in E(G)$, then $A_{u,v}=x_{u,v}$. Let
$$\tau(G,\underline{x})=\max_{A\in \mathcal{A}(G,\underline{x})}\det A,$$
and let $\A_G(\underline{x})$ be the matrix, where the maximum is achieved (this is unique) if the set $\mathcal{A}(G,\underline{x})$ is not empty.
The entries of this matrix will be denoted by $\A_G(\underline{x})_{u,v}=z_m(u,v)$. Since $\A_G(\underline{x})\in \mathcal{A}(G,\underline{x})$ we have $z_m(u,u)=1, z_m(u,v)=x_e$ if $(u,v)=e\in E(G)$.

\begin{Th} The optimization problem has a unique maximizer $\A_G(\underline{x})$ if $\mathcal{A}(G,\underline{x})$ is not empty. If $u$ and $v$ are not adjacent vertices, then $\A_G(\underline{x})^{-1}_{u,v}=0.$
\end{Th}

We will denote the inverse of $\A_G(\underline{x})$ by $\B_G(\underline{x})$. It will be convenient to parametrize $\B_G(\underline{x})$ as follows: $\B_G(\underline{x})_{u,v}=-y_m(u,v)$ if $(u,v)\in E(G)$, $\B_G(\underline{x})_{u,u}=1+\sum_{v\in N_G(u)}z_m(u,v)y_m(u,v)$, where $z_m(u,v)=x_{u,v}$ for $e=(u,v)\in E(G)$. We have seen that $\B_G(\underline{x})_{u,v}=0$ if $(u,v)\notin E(G)$.

There are many results in this paper that naturally extend to the multivariate case. For instance, Theorem~\ref{dual} and Theorem~\ref{edge-deletion} have both multivariate counterparts.

\begin{Lemma} \label{dual_multivariate} Let $\mathcal{B}(G,\underline{x})$ be the set of positive definite matrices $B=B(\underline{t})$ which are parametrized as follows:
\begin{enumerate}
\item if $(u,v)\notin E(G)$, then $B_{u,v}=0$,
\item if $(u,v)\in E(G)$, then $B_{u,v}=-t_m(u,v)$,
\item for $u\in V(G)$ we have $B_{u,u}=1+\sum_{v\in N_G(u)}x_{u,v}t_m(u,v)$.
\end{enumerate}
Then $\det(B)$ is a strictly log-concave function on $\mathcal{B}(G,\underline{x})$, and it takes its maximum at the unique $B(\underline{y})$ for which $B(\underline{y})^{-1}=\A_G(\underline{x})$, i. e., $B(\underline{y})=\B_G(\underline{x})$.
\end{Lemma} 

For trees the following statement summarizes the basic facts.

\begin{Prop}
Let $T$ be a tree and suppose that for each edge $e$ a number $x_e\in (-1,1)$ is given. Let $\A_T(\underline{x})$ be the matrix whose $uv$ entry is $\prod_{e\in P}x_e$, where $P$ is the unique path connecting the vertices $u$ and $v$. Then $\A_T(\underline{x})$ is a positive definite matrix, its inverse $\B_T(\underline{x})$ has the following entries: the $uv$ entry is $0$ if $u$ and $v$ are distinct not adjacent vertices, $-\frac{x_e^2}{1-x_e^2}$ if $e=(u,v)$, and $1+\sum_{e: u\in e}\frac{x_e^2}{1-x_e^2}$ if $u=v$. The determinant of $\A_T(\underline{x})$ is $\prod_{e\in E(T)}(1-x_e^2)$.
\end{Prop}

\begin{Th} Suppose that for some graph $G$, an edge $e$ and some $\underline{x}\in [0,1]^{E(G)}$ we have $|y_e|\leq \frac{x_e}{1-x_e^2}$. Then
$$\tau(G,\underline{x})\geq (1-x_e^2)\tau(G-e,\underline{x}).$$
\end{Th}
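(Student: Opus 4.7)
The plan is to adapt the proof of Theorem~\ref{edge-deletion} essentially verbatim, with the univariate parameter $x$ replaced by the edge weight $x_e$ along the distinguished edge $e=(u,v)$. For each edge $f=(u',v')$ of $G$, define $E_f(x_f)$ to be the symmetric matrix with entry $x_f$ at positions $(u',u'),(v',v')$, entry $-1$ at $(u',v'),(v',u')$, and zero elsewhere; the parametrization above Lemma~\ref{dual_multivariate} then gives
\[
\B_G(\underline{x}) = I + \sum_{f \in E(G)} y_f\, E_f(x_f).
\]

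First, I would establish the multivariate analog of Lemma~\ref{B-minors}: if $\B_G(\underline{x})^{c(u)}$ and $\B_G(\underline{x})^{c(u,v)}$ denote the principal submatrices obtained by deleting the rows/columns of $u$ and of $\{u,v\}$ respectively, then
\[
\det \B_G(\underline{x})^{c(u)} = \det \B_G(\underline{x}), \qquad \det \B_G(\underline{x})^{c(u,v)} = (1 - x_e^2)\det \B_G(\underline{x}).
\]
Both identities follow from the same Schur-complement argument as in Lemma~\ref{B-minors}, since the relevant $2\times 2$ principal block of $\A_G(\underline{x})$ is $\bigl(\begin{smallmatrix} 1 & x_e \\ x_e & 1 \end{smallmatrix}\bigr)$, with determinant $1-x_e^2$.

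Next, consider the one-parameter deformation $B(t) = \B_G(\underline{x}) - t\,E_e(x_e)$ and show via Sylvester's criterion (Theorem~\ref{Sylvester}) that $B(t)$ is positive definite for $|t| \leq \tfrac{x_e}{1-x_e^2}$. Labelling the vertices so that $u=v_1,v=v_2$, the leading principal minors of order $\geq 3$ coincide with those of $\B_G(\underline{x})$ and are positive, while the identities above give
\[
\det B(t)_2 = \det \B_G(\underline{x})\bigl(1 - t x_e(1-x_e^2)\bigr),\qquad \det B(t) = \det \B_G(\underline{x})\bigl(1 - t^2 (1-x_e^2)^2\bigr),
\]
both of which remain positive in the required range. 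Choosing $t=y_e$, the hypothesis $|y_e|\leq \tfrac{x_e}{1-x_e^2}$ ensures that $B(y_e)$ is positive definite; by construction its $(u,v)$-entry vanishes and the diagonal entries at $u$ and $v$ have been reduced by exactly $x_e y_e$, so that $B(y_e)\in \mathcal{B}(G-e,\underline{x})$.

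Finally, applying Lemma~\ref{dual_multivariate} to $G-e$,
\[
\frac{1}{\tau(G-e,\underline{x})} \geq \det B(y_e) = \det \B_G(\underline{x})\bigl(1 - y_e^2(1-x_e^2)^2\bigr) \geq (1-x_e^2)\det \B_G(\underline{x}) = \frac{1-x_e^2}{\tau(G,\underline{x})},
\]
which rearranges to the claimed inequality. I do not foresee a genuine obstacle: every step of the univariate argument transfers once the role of $x$ is played by $x_e$ on the perturbed edge, and the only bookkeeping is verifying that $B(y_e)$ really matches the parametrization of $\mathcal{B}(G-e,\underline{x})$, which is immediate because $E_e(x_e)$ was tailored precisely so that subtracting $y_e$ times it removes the contribution of edge $e$ from both the off-diagonal and the two diagonal entries of $\B_G(\underline{x})$.
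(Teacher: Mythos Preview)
Your proposal is correct and follows exactly the approach the paper intends: the multivariate statement is presented in Section~\ref{multivariate} without proof, as the direct counterpart of Theorem~\ref{edge-deletion}, and your adaptation replaces $x$ by $x_e$ in the perturbation $E_e$, in the multivariate analogue of Lemma~\ref{B-minors}, and in the Sylvester-criterion estimates precisely as required. The verification that $B(y_e)\in\mathcal{B}(G-e,\underline{x})$ via the identity $\B_G(\underline{x})=I+\sum_f y_fE_f(x_f)$ is the right bookkeeping, and no step of the univariate argument fails to transfer.
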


\begin{Rem}
It might be tempting to believe that
$$\tau(G,\underline{x})\geq \prod_{e\in E(G)}(1-x_e^2),$$
but this is not true in general. The proof of the multivariate version of Theorem~\ref{tau-main} fails at the point that Theorem~\ref{edge-contraction} has no multivariate counterpart. 
\end{Rem}

\section{Open problems} \label{open problems}

We end this paper with some open problems.
\bigskip

The following problem is motivated by Theorem~\ref{large girth limit}, where the question is answered for the interval $x\in [0,\frac{1}{d-1})$.

\begin{?} \label{problem 1}
Let $(G_n)_n$ be a sequence of $d$--regular graphs with girth $g(G_n)\to \infty$.  Is it true that for all $x\in [0,1)$, the limit 
$$\lim_{n\to \infty}\frac{\ln \tau(G_n,x)}{v(G_n)}$$
exists, and if it exists what is it?
\end{?}

A similar question concerns with the infinite graph $\mathbb{Z}^d$.

\begin{?} \label{problem 2}
Let $(G_n)_n$ be a sequence of graphs converging to $\mathbb{Z}^d$, for instance, larger and larger boxes.  Is it true that for all $x\in [0,1)$, the limit 
$$\widetilde{\tau}(\mathbb{Z}^d,x)=\lim_{n\to \infty}\frac{\ln \tau(G_n,x)}{v(G_n)}$$
exists, and if it exists what is it?
\end{?}

\begin{?} \label{problem 3}
Is it true that the function $\frac{\tau(G,x)}{(1-x^2)^{e(G)}}$ is monotone increasing? Equivalently,
$$-\frac{\tau'(G,x)}{\tau(G,x)}\leq 2e(G)\frac{x}{1-x^2}.$$
\end{?}

\begin{?} \label{problem 4}
Is it true that if $x\geq 0$, then all elements of the matrix $\A_G(x)$ are non-negative? Is it true that if $G$ is connected and $x>0$, then all elements of the matrix $\A_G(x)$ are positive?
\end{?}

\begin{?} \label{problem 5}
Is it true that for all graph $G$ and edge $(u,v)\in E(G)$, and fixed $x\geq 0$ we have $y_G(u,v)\leq \frac{x}{1-x^2}$?
\end{?}

Naturally, Problem~\ref{problem 5} implies Problem~\ref{problem 3}. We also believe that Problem~\ref{problem 4} and Problem~\ref{problem 5} are actually equivalent. 

\begin{?}
Is it true that if $G$ is a random Erd\H os-R\'enyi graph or a random regular graph, then with high probability $y_G(u,v)\geq 0$ for all $(u,v)\in E(G)$ and $x\in (0,1)$? 
\end{?}

\section{Appendix: tools from probability theory and matrix analysis} \label{preliminaries}

In this section we collected a few results from probability theory and matrix analysis that we use in the paper.

\subsection{Probability theory}

Let $Z_1,\dots ,Z_k$ be independent random variables with standard normal distributions. Then
the random variable $X=\left(\sum_{i=1}^kZ_i^2\right)^{1/2}$ has a chi distribution with parameter $k$ denoted by $\chi_k$, see chapter 11 of the book \cite{FEHP}. Its probability density function is  
$$f_k(x)=\left\{ \begin{array}{ll} \frac{1}{2^{k/2-1}\Gamma\left(\frac{k}{2}\right)}x^{k-1}e^{-x^2/2} & \mbox{if} \ x\geq 0, \\
0 & \mbox{otherwise.}\end{array} \right.$$
Here $\Gamma(z)=\int_0^{\infty}x^{z-1}e^{-x}dx$ for $z>0$. 
The random variable $X^2=\sum_{i=1}^kZ_i^2$ has a chi-square distribution with parameter $k$ denoted by $\chi_k^2$. Its probability density function is
$$g_k(x)=\left\{ \begin{array}{ll} \frac{1}{2^{k/2}\Gamma\left(\frac{k}{2}\right)}x^{k/2-1}e^{-x/2} & \mbox{if} \ x\geq 0, \\
0 & \mbox{otherwise.}\end{array} \right.$$

An $\textbf{X}=(X_1,\dots ,X_n)$ has a non-degenerate multivariate normal distribution if for the 
vector $\mu=(\E X_1,\dots ,\E X_n)$ and positive definite covariance matrix $\Sigma$ 
with entries $\Sigma_{ij}=\E(X_iX_j)-\E X_i\cdot \E X_j$ we have the the density function
$$f_{\textbf{X}}(x)=\frac{1}{(2\pi)^{n/2}\det(\Sigma)^{1/2}}
\exp \left(-\frac{1}{2}(x-\mu)^T\Sigma^{-1}(x-\mu)\right).$$

Let $G$ be a $n\times k$ matrix such that each column vector is independently chosen from an $n$-variate normal distribution with zero mean and covariance matrix $\Sigma$. Then the random matrix $GG^T$ is an $n\times n$ positive semidefinite random matrix with $k$ degree of freedom. Note that when $n=1$ and $\Sigma=I$ we get back the chi-square distribution. In general, this is the Wishart distribution \cite{Wish}, its probability density function is the following:
$$f(M)=\frac{1}{2^{nk/2}\det(\Sigma)^{k/2}\Gamma_{n}\left(\frac{k}{2}\right)}\det(M)^{(k-n-1)/2}e^{-\frac{1}{2}\mathrm{Tr}(\Sigma^{-1}M)}$$
if $M$ is a positive definite matrix, and $0$ otherwise. Here $\Gamma_n(z)$ is the multivariate Gamma function defined as follows:
$$\Gamma_n(z)=\pi^{n(n-1)/4}\prod_{j=1}^n\Gamma(z+(1-j)/2).$$

\subsection{Matrix analysis}

In this section we collected a few results from linear algebra that we use in the paper. All of them can be found in the book \cite{HJ}.
\bigskip

The following statement can be found in \cite{HJ}, section 0.7.3 and 0.8.4 and section 7.7 (with an emphasis on equation 7.7.5).

\begin{Th} \label{Schur-complement}
Let $A=\left(\begin{array}{cc} A_{11} & A_{12} \\ A_{21} & A_{22} \end{array}\right)$ be a block matrix and  let $B=\left(\begin{array}{cc} B_{11} & B_{12} \\ B_{21} & B_{22} \end{array}\right)$ be the corresponding decomposition of the inverse matrix. Then
$$\left(\begin{array}{cc} A_{11} & A_{12} \\ A_{21} & A_{22} \end{array}\right)=\left(\begin{array}{cc} (B_{11}-B_{12}B_{22}^{-1}B_{21})^{-1} & -B_{11}^{-1}B_{12}(B_{22}-B_{21}B_{11}^{-1}B_{12})^{-1} \\
-B_{22}^{-1}B_{21}(B_{11}-B_{12}B_{22}^{-1}B_{21})^{-1} & (B_{22}-B_{21}B_{11}^{-1}B_{12})^{-1} \end{array}\right).$$
supposing that the appropriate matrices are invertible.
Furthermore,
$$\det(B)=\det(B_{22})\det(B_{11}-B_{12}B_{22}^{-1}B_{21})=\det(B_{22})\det(A_{11})^{-1}.$$
Furthermore, the matrix $A$ is positive definite if and only if $A_{11}$ and $A_{22}-A_{21}A_{11}^{-1}A_{12}$ are positive definite.
\end{Th}

The following theorem is Theorem 7.2.5 in \cite{HJ}.

\begin{Th}[Sylvester's criterion] \label{Sylvester}
Let $A$ be a symmetric matrix of size $n\times n$. For $1\leq k\leq n$ let $A_k$ denote the matrix induced by the first $k$ rows and $k$ columns. 
Suppose that $\det(A_k)>0$ for all $1\leq k\leq n$. Then $A$ is positive definite.
\end{Th}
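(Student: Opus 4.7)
The plan is to prove Sylvester's criterion by induction on $n$, leveraging the Schur complement decomposition stated in Theorem~\ref{Schur-complement}. The base case $n=1$ is immediate: positive definiteness of a $1\times 1$ matrix $(a)$ is just $a>0$, which coincides with $\det(A_1)>0$.

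For the inductive step, I would assume the statement for size $n-1$ and consider a symmetric $A$ of size $n\times n$ with $\det(A_k)>0$ for all $k\le n$. First, the submatrix $A_{n-1}$ (formed by the first $n-1$ rows and columns) inherits the hypothesis — its leading principal minors are exactly those of $A$ up to index $n-1$ — so by the inductive hypothesis $A_{n-1}$ is positive definite, hence invertible. Writing
$$A=\begin{pmatrix} A_{n-1} & b \\ b^T & a_{nn}\end{pmatrix},$$
I would then perform a congruence transformation using the upper unitriangular matrix
$$P=\begin{pmatrix} I & -A_{n-1}^{-1}b \\ 0 & 1\end{pmatrix},$$
which is invertible with $\det(P)=1$. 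A direct calculation (equivalently, an application of the block formulas of Theorem~\ref{Schur-complement}) gives
$$P^T A P=\begin{pmatrix} A_{n-1} & 0 \\ 0 & s\end{pmatrix},\qquad s:=a_{nn}-b^T A_{n-1}^{-1}b.$$

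From the determinant identity in Theorem~\ref{Schur-complement} we read off $\det(A)=\det(A_{n-1})\cdot s$. Combining the assumption $\det(A)=\det(A_n)>0$ with $\det(A_{n-1})>0$ yields $s>0$. Hence $P^T A P$ is block diagonal with a positive definite upper block and a positive scalar lower block, so $P^T A P$ is positive definite. Since congruence by an invertible matrix preserves positive definiteness — for any nonzero $y$, taking $x=Py\neq 0$ gives $y^T(P^TAP)y=x^TAx$ — the matrix $A$ is positive definite, completing the induction.

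The argument is entirely routine once Theorem~\ref{Schur-complement} is in hand; the only mild subtlety is verifying that $A_{n-1}^{-1}$ exists before writing down the Schur complement, which is precisely what the inductive hypothesis provides. No real obstacle is expected.
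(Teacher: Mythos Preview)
Your proof is correct and entirely standard. Note, however, that the paper does not actually prove Theorem~\ref{Sylvester}: it is stated in the Preliminaries section as a well-known result from matrix analysis, with a reference to \cite{HJ}, so there is no ``paper's own proof'' to compare against. Your induction via the Schur complement is exactly the textbook argument one would find in such a reference.
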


The following theorem is Theorem 7.8.16 in \cite{HJ}.

\begin{Th}[Oppenheim's inequality] \label{Opponheim}
Let $A$ and $B$ be two positive definite matrices of size $n\times n$. Let $C$ be their Hadamard-product: $C_{ij}=A_{ij}B_{ij}$. Then
$$\det(C)\geq \left(\prod_{i=1}^na_{ii}\right)\det(B).$$
\end{Th}

For the following theorem, see Theorem 7.6.6 and Corollary 7.6.8 in \cite{HJ}. 

\begin{Th} \label{log-concave-pos-def}
Let $A$ and $B$ positive definite matrices and $\alpha\in (0,1)$, then 
$$\det(\alpha A+(1-\alpha)B)\geq \det(A)^{\alpha}\det(B)^{1-\alpha}.$$
\end{Th}

For the following theorem, see Section 7.4.12 in \cite{HJ}.

\begin{Th} \label{quadratic-pos-def}
Let $A$ be a positive definite matrix of size $n\times n$ and $\underline{x}\in \mathbb{R}^n$. Then
$$\underline{x}^TA\underline{x}\cdot \underline{x}^TA^{-1}\underline{x}\geq ||\underline{x}||_2^4.$$
\end{Th}

For the following theorem, see 8.3P15 in \cite{HJ}, or for a more comprehensive treatment see  Chapter 2.5 in \cite{HJ2}.

\begin{Th} \label{M-matrix-inverse}
Suppose that $B$ is a  positive definite matrix for which $B_{ij}\leq 0$ whenever $i\neq j$. Then all elements of $B^{-1}$ are non-negative.
\end{Th}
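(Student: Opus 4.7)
The plan is to realize $B$ as a scaled Stieltjes matrix and expand its inverse as a convergent Neumann series whose terms are manifestly non-negative.

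First I would normalize. Since $B$ is positive definite, the diagonal entries $B_{ii}$ are strictly positive, so I can write $B=D-N$, where $D=\mathrm{diag}(B_{11},\dots,B_{nn})$ and $N_{ij}=-B_{ij}$ for $i\neq j$ with $N_{ii}=0$. By hypothesis $N$ has non-negative entries. Conjugating by $D^{-1/2}$ produces the symmetric matrix
\[
\widetilde{B}:=D^{-1/2}BD^{-1/2}=I-M,\qquad M:=D^{-1/2}ND^{-1/2},
\]
where $M$ is again symmetric and entrywise non-negative, with zeros on the diagonal. Note that $\widetilde{B}$ is positive definite because conjugation by an invertible matrix preserves positive definiteness.

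The next step is a spectral one: I would show $\rho(M)<1$. Since $I-M$ is positive definite, every eigenvalue $\lambda$ of $M$ satisfies $1-\lambda>0$, i.e.\ $\lambda<1$. Because $M$ is symmetric and entrywise non-negative, Perron--Frobenius (in the form that applies to arbitrary non-negative matrices) guarantees that $\rho(M)$ is itself an eigenvalue of $M$, hence $\rho(M)<1$.

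With $\rho(M)<1$, the Neumann series
\[
(I-M)^{-1}=\sum_{k=0}^{\infty}M^{k}
\]
converges. Each $M^{k}$ is a product of non-negative matrices and therefore has non-negative entries, so $(I-M)^{-1}\geq 0$ entrywise. Finally,
\[
B^{-1}=D^{-1/2}(I-M)^{-1}D^{-1/2}
\]
is a product of a non-negative matrix with two positive diagonal matrices, so every entry of $B^{-1}$ is non-negative, as required.

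The only subtle point is the spectral-radius step: one must be careful that $M$ is not assumed irreducible, so the classical Perron--Frobenius statement for positive matrices does not apply directly. I would handle this either by invoking the general Perron--Frobenius theorem for non-negative matrices (the spectral radius is always an eigenvalue), or more elementarily, by noting that $M$ is symmetric so $\rho(M)=\max_i|\lambda_i(M)|$ and combining this with the bound $\lambda_i(M)<1$ together with the fact that the lowest eigenvalue $\lambda_{\min}(M)\geq -\rho(M)>-1$ follows because $-M\leq M$ in the entrywise sense would also need attention; the cleanest route is simply to cite the general Perron--Frobenius theorem.
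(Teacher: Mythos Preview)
The paper does not supply its own proof of this statement: it is listed in the Preliminaries section as a standard fact from matrix analysis, with a reference to Horn and Johnson. So there is no in-paper argument to compare against.

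Your argument is the standard one and is correct. The only place that deserves a cleaner treatment is the spectral-radius step, where your closing paragraph wanders a bit. The circular-looking remark that $\lambda_{\min}(M)\geq -\rho(M)$ ``follows because $-M\leq M$ entrywise'' is not the right justification (that inequality is a tautology once $\rho(M)$ is defined as $\max_i|\lambda_i|$, and does not by itself yield $\rho(M)<1$). Since $M$ is symmetric with non-negative entries you can avoid invoking the general Perron--Frobenius theorem altogether: for any unit vector $v$ one has $|v^{T}Mv|\leq |v|^{T}M\,|v|\leq \lambda_{\max}(M)$, so $|\lambda_{\min}(M)|\leq \lambda_{\max}(M)<1$ and hence $\rho(M)=\lambda_{\max}(M)<1$. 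With this in place the Neumann-series expansion and the back-substitution $B^{-1}=D^{-1/2}(I-M)^{-1}D^{-1/2}$ go through exactly as you wrote.
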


\textbf{Acknowledgment.} The authors are very grateful to L\'aszl\'o Lov\'asz for useful discussions. They also thank the anonymous referee their comments concerning the presentation of the paper.


\begin{thebibliography}{10}

\bibitem{BJL1} W. Barrett, C. Johnson, and R. Loewy, The real positive definite completion problem:
cycle completability, {\it Memoirs of the American Mathematical Society}, {\bf 584},~(1996).

\bibitem{BJLT} W. Barrett, C. Johnson, and P. Tarazaga, The real positive definite completion problem
for a simple cycle, {\it Linear Algebra and its Applications}, \textbf{192}, (1993), 3--31


\bibitem{Bas} H. Bass, The Ihara-Selberg zeta function of a tree lattice. {\it International Journal of Mathematics} ~{\bf 3}(6), ~(1992), ~717--797.

\bibitem{BP}
I. Benjamini, Y. Peres, A correlation inequality for tree-indexed Markov chains, in ``Seminar of Stochastic Processes, Proc. Semin., Los Angeles/CA (USA) 1991''

\bibitem{BR}
G. R. Blakley, P. A. Roy, A H\"older type inequality for symmetric matrices with nonnegative entries, {\it Proc. Amer. Math. Soc.} {\bf 16}, ~(1965),~1244--1245 

\bibitem{Bol} E. B{\o}lviken, Probability Inequalities for the Multivariate Normal with Nonnegative Partial Correlations, {\it Scandinavian Journal of Statistics} {\bf 9}, (1982), 49--58.

\bibitem{CheCha} R. Chellappa and S. Chatterjee, Classification of textures using Gaussian Markov random fields, {\it IEEE Transactions on Acoustics, Speech, and Signal Processing} \textbf{33}(4), (1985), 959--963

\bibitem{CFP} F. S. Cohen, Z. Fan and M. A. Patel, Classification of rotated and scaled textured images using Gaussian Markov random field models. {\it IEEE Transactions on Pattern Analysis and Machine Intelligence} \textbf{13}(2), (1991),~192--202

\bibitem{CFS}
D. Conlon, J. Fox, B. Sudakov, An approximate version of Sidorenko's conjecture, {\it GAFA},~{\bf 20}, ~(2010),~1354--1366

\bibitem{Demp} A. Dempster, Covariance selection, {\it Biometrics},~(1972),~157--175
 

\bibitem{DLP} J. Ding, J. R. Lee and Y. Peres, Cover times, blanket times, and majorizing measures. {\it Proceedings of the forty-third annual ACM symposium on Theory of computing}, (2011),~61--70

\bibitem{FEHP} C. Forbes, M. Evans, N. Hastings and B. Peacock, Statistical distributions, John Wiley and Sons, (2011(


\bibitem{GalTet} D. Galvin and P. Tetali, On weighted graph homomorphisms, {\it DIMACS Series in Discrete Mathematics
and Theoretical Computer Science},~Vol. {\bf 63}, ~(2004),~97--104

\bibitem{GodRoy} C. Godsil and G. F. Royle, Algebraic graph theory, Springer Science \& Business Media, \textbf{207} (2001)

\bibitem{GJSW} R. Grone and C. R. Johnson, E. M. de S\'a, and H. Wolkowicz,  Positive definite completions of partial hermitian matrices, {\it Linear Algebra and its Applications}, \textbf{58}, (1984), 109--124


\bibitem{Has} K. I. Hashimoto, Zeta functions of finite graphs and representations of p-adic groups.
{\it Automorphic forms and geometry of arithmetic varieties},~(1989),~ 211--280.

\bibitem{Hat}
H. Hatami, Graph norms and Sidorenko's conjecture, {\it Israel J. Math.} {\bf 175}(1),~(2010),~125-150

\bibitem{HJ} R. A. Horn and C. R. Johnson, Matrix analysis, Cambridge University Press, second edition (2013)

\bibitem{HJ2} R. A. Horn and C. R. Johnson, Topics in matrix analysis, Cambridge University Press, (1994) 

\bibitem{Ih}
Y. Ihara, On discrete subgroups of the two by two projective linear group over p-adic fields, {\it Journal of the Mathematical Society of Japan} ~{\bf 18}(3), ~(1966), ~219--235.

\bibitem{KR2} S. Karlin and Y. Rinott, Classes of orderings of measures and related correlation inequalities. I. Multivariate totally positive distributions, {\it Journal of Multivariate Analysis}, {\bf 10}(4), (1980), 467--498

\bibitem{KR1} S. Karlin and Y. Rinott, M-matrices as covariance matrices of multinormal distributions, {\it Linear Algebra and its Applications}, \textbf{52}, (1983), 419--438



\bibitem{KLL} J. H. Kim, C. Lee, J. Lee, Two approaches to Sidorenko's conjecture, {\it Transactions of the American Mathematical Society}, \textbf{368}(7), (2016),~5057--5074

\bibitem{Knuth} D. E. Knuth, The sandwich theorem, {\it The Electronic Journal of Combinatorics}~{\bf 1}, ~(1994)

\bibitem{KotSun}
M. Kotani, T. Sunada, Zeta functions of finite fraphs, {\it J. Math. Sci. Univ. Tokyo}~{\bf 7}, ~(2000),~7--25

\bibitem{Laur} M. Laurent, The real positive semidefinite completion problem for series-parallel graphs, {\it Linear Algebra and its Applications}, \textbf{252}, (1997), 347--366

\bibitem{LUZ} S. Lauritzen, C. Uhler, and P. Zwiernik, Maximum likelihood estimation in gaussian
models under total positivity,~{\it Annals of Statistics}, \textbf{47}(4), (2019), 1835--1863

\bibitem{Lov} L. Lov\'asz, "On the Shannon Capacity of a Graph", IEEE Transactions on Information Theory,~{\bf 25}(1),~(1979),~1--7

\bibitem{LS} L. Lov\'asz, B. Szegedy, Limits of dense graph sequences,~{\it J. of Combinatorial Theory B}~{\bf 96},~(2006), ~933--957

\bibitem{L} L. Lov\'asz, Subgraph densities in signed graphons and the local Simonovits-Sidorenko conjecture, {\it Electronic J. of Comb.}~{\bf 18}, (2011) 

\bibitem{LSz} X. Li, B. Szegedy, On the logarithmic calculus and Sidorenko's conjecture, to appear

\bibitem{Mar} K. V. Mardia, Multi-dimensional multivariate Gaussian Markov random fields with application to image processing, {\it Journal of Multivariate Analysis} (1988)


\bibitem{mckay} B. D. McKay, Spanning trees in regular graphs. {\it Europ. J. Combin.}~{\bf 4}(2),~(1983),~149--160

\bibitem{RueHeld} H. Rue and L. Held, Gaussian Markov random fields: theory and applications, 2005, CRC press

\bibitem{Sid}
A.F Sidorenko, A correlation inequality for bipartite graphs, {\it Graphs Combin.}~{\bf 9} (1993), 201--204

\bibitem{Sim} M. Simonovits, Extremal graph problems, degenerate extremal problems and super-saturated graphs, in ``Progress in Graph Theory (Waterloo, Ont., 1982)'', Academic Press, Toronto, ~(1984), ~419--437.


\bibitem{Uhler} C. Uhler, Geometry of maximum likelihood estimation in Gaussian graphical models, {\it The Annals of Statistics}~{\bf 40}(1),~(2012),~238--261

\bibitem{Uhler2} C. Uhler, Gaussian Graphical Models: An Algebraic and Geometric Perspective, ~Arxiv 1707.04345

\bibitem{StaTer}
H. M. Stark, A. A. Terras, Zeta functions of finite graphs and coverings. {\it Advances in Mathematics}, {\bf 121}(1),~(1996), ~124--165

\bibitem{Sz1} B. Szegedy, Sparse graph limits, entropy maximization and transitive graphs, Arxiv 1504.00858

\bibitem{Sz2} B. Szegedy, An information theoretic approach to Sidorenko's conjecture, ~Arxiv 1406.6738

\bibitem{Wish} J. Wishart, The generalised product moment distribution in samples from a normal multivariate population, Biometrika, {\bf 20A},~(1928),~32--52


\end{thebibliography}
\end{document}